\documentclass[a4paper, 11pt]{amsart}
\usepackage[mathscr]{euscript}
\usepackage{mathrsfs}
\usepackage{mathtools, amsmath,amsthm, amssymb,latexsym,amsfonts}
\usepackage{thmtools, thm-restate}
\usepackage{enumerate}

\usepackage{breqn}

\newtheorem{theorem}{Theorem }[section]

\newtheorem{corollary}[theorem]{Corollary}

\newtheorem{lemma}[theorem]{Lemma}

\newtheorem{proposition}[theorem]{Proposition}

\newtheorem{remark}[theorem]{Remark}

\newtheorem*{theorem*}{Theorem}
\newtheorem*{remark*}{Remark}

\newcommand{\ud}{\mathrm{d}}

\newcommand{\Rd}{\mathbb{R}^d}
\newcommand{\R}{\mathbb{R}}

\newcommand{\MFRd} {M_F(\mathbb{R}^d)}
\newcommand{\CFE} {M_{F,S}(E)}

\newcommand{\MFOE} {{\overset{\circ}{M}}_F(E)}
\newcommand{\MFORd} {{\overset{\circ}{M}}_F(\Rd)}

\newcommand{\LL} {log-Laplace}

\newcommand{\SBM} {super-Brownian motion}
\newcommand{\intfour}[4]{\ensuremath{\int_{#1}^{#2} \big(S_{#3}v^\gamma(s)\big)(#4)\ud s}}

\newcommand{\OTS}{\left(\Omega\cap \{t<T(1)\}\right)}
\newcommand{\OTSRd}{\left(\Omega\cap \{t<T(1)\}\right)\times \mathbb{R}^d}

\newcommand{\OTG}{\left(\Omega\cap \{t<T(1)\},\mathcal{F}\cap \{t<T(1)\} \right)\times \left(\Rd,\mathcal{B}(\Rd)\right)}

\newcommand{\OTSN}{\Omega\cap \{t<T(1)\}\times \mathbb{R}^{N\times d}}

\newcommand{\FUNC} {\phi(z_1,z_2,\ldots,z_N)}

\newcommand{\DZ} {\ud z_1 \ud z_2\ldots \ud z_N}

\newcommand{\Xt} {\{X_t\}_{t\geq 0}}

\newcommand{\eqdist} { \overset{d}{=}}
\newcommand{\weakconv} {\overset{w}{\Longrightarrow} }

\numberwithin{equation}{section}

\begin{document}

\title[Super-Brownian motion with infinite mean]{Absolute continuity of the Super-Brownian motion with infinite mean}\
\author{Rustam Mamin}
\address{Faculty of Mathematics, Technion---Israel Institute of Technology, Haifa 32000, Israel}
\email{rst@technion.ac.il}
\author{Leonid Mytnik}
\address{Faculty of Industrial 
Engineering \& Management, Technion---Israel Institute of Technology, Haifa 32000, Israel}
\email{leonid@ie.technion.ac.il}
\date{December 2020}
\keywords{Superprocesses, stable branching}
\subjclass[2000]{Primary 60G57, 60J68; secondary 60J80}
\thanks{LM is supported in part by ISF grant No. ISF 1704/18.}
\begin{abstract}
In this work  we prove that for any dimension $d\geq 1$  and any  $\gamma \in (0,1)$ super-Brownian motion corresponding to the  \LL\ equation
\begin{equation*}
\begin{split}
\frac{\partial v(t,x)}{\partial t } & = \frac{1}{2}\bigtriangleup v(t,x) + v^\gamma (t,x) ,\: (t,x) \in \mathbb{R}_+\times\Rd,\\
v(0,x)&= f(x)
\end{split}
\end{equation*}
is absolutely continuous with respect to the  Lebesgue measure at any fixed time $t>0$.
Our proof is based on properties of solutions of the \LL\ equation.
 We also prove that when initial datum $v(0,\cdot)$ is a finite, non-zero measure,   then the  \LL\ equation  has a unique, continuous solution.
Moreover  this solution  continuously depends on initial data.
\end{abstract}

\maketitle

\clearpage

\vspace*{1\baselineskip}

\section{Introduction and main result}
This paper is devoted to studying regularity properties of the super-Brownian motion with stable branching mechanism with infinite mean.

Let us start with a notation. For a measure $\mu$ on $\Rd$ and a function $f$ on $\Rd$ let  $\langle \mu,f\rangle$ or $\langle f,\mu\rangle $ 
denote the  integral of a function $f$ with respect to a  measure $\mu$ (whenever it is well defined):
\begin{equation*}
\langle f,\mu \rangle= \langle\mu, f\rangle\equiv \int_{\Rd} f(x)\mu(\ud x).
\end{equation*}
Let $\gamma\in (0,2]\setminus \{1\}$. The super-Brownian motion with $\gamma$-stable branching mechanism,  $X=\{X_t, t\geq 0\}$,  is a Markov  measure-valued process on $\Rd$ 
which is characterized
as follows: for any finite measure $\mu$ and a nonnegative not identically zero function~$f\in L^{\infty}(\Rd)$,  
\begin{equation}
E_\mu\left(e^{-\langle X_t, \phi\rangle}\right)=
E\left( \left. e^{-\langle X_t, \phi\rangle}\right| X_0=\mu \right)= e^{-\langle \mu, v(t,\cdot)\rangle}, \quad \forall t\geq 0. 
\end{equation} 
Here $v$ is a solution to the so-called log-Laplace equation:
\begin{equation}
\label{eq:loglap1}
v(t,x) = \left(S_t f\right)(x) - \int_0^t (S_{t-s}v^\gamma(s,\cdot))(x)\ud s, \quad (t,x)\in \R_+\times\Rd,
\end{equation}
if $\gamma\in (1,2]$, and  for $\gamma\in(0,1)$, the sign in front of the non-linear term is reversed:
\begin{equation}
\label{eq:loglap2}
v(t,x) = \left(S_t f\right)(x) + \int_0^t (S_{t-s}v^\gamma(s,\cdot))(x)\ud s, \quad (t,x)\in \R_+\times\Rd. 
\end{equation}
Here and for  the rest of the paper  $\{S_t\}_{ t\geq 0}$ denotes a transition semigroup of Brownian motion 
whose generator is Laplacian $\frac{1}{2}\Delta$ in  $\Rd$. Clearly 
$$ S_t f(x) = \int_{\Rd} f(y)p_t(x-y)\,dy,\quad t\geq 0,$$ 
where   $\{p_t(x), t\geq 0, x\in \Rd\}$ is the transition density of the Brownian motion. 
 $\{S_t\}_{t\geq 0}$ describes the underlying Brownian motion of $X$,  whereas its
continuous-state branching mechanism described by $v\mapsto \pm v^\gamma, v\geq 0$. 
These equations were  considered by Watanabe in~\cite{Wat1} for a more general ``motion'' operator. 
As for the case of super-Brownian motion  with $\gamma$-stable branching mechanism (in what follows we will call it $\gamma$-super-Brownian motion)
it is well known that for $\gamma\in (1,2]$ in dimensions $d<\frac{2}{\gamma-1}$  at any fixed time $T>0$,  the measure
$X_t=X_t(dx)$ is absolutely continuous with probability one (cf.~\cite{bib:F88}). By an abuse of
notation, we sometimes denote a version of the density function of the measure $X_t=X_t(dx)$ by the same symbol, $X_t(dx)=X_t(x)\,dx$. 
It is even known that for $d=1$, $\gamma\in (1,2]$, at fixed times $t$,  there is a continuous version of the density in $x$ variable (see~\cite{MP}), and for $\gamma=2$, and again $d=1$,
there exists even jointly space-time continuous version of the density (see~\cite{KS}, \cite{bib:R89}). 
More detailed regularity properties of the densities of superprocesses with stable branching mechanism with possibly more general motion have been studied in~\cite{FMW1}, \cite{FMW2}, \cite{bib:MW15}, \cite{bib:MW16}. 

This paper is devoted to deriving absolute continuity of $X$ for the case of $\gamma\in (0,1)$. It is easy to check  that in this case $E(\langle X_t, 1\rangle)=\infty$, for $t>0$,  which adds some technical 
dificulties for the proofs.

Before we state the main result of this paper we need to introduce some notation.
\medskip
Let $E$ be any Polish space. 
Let  $C(E)$ and $B(E)$ be respectively  the   spaces of continuous and \emph{Borel} measurable  functions on space $E$. 
If $F(E)$ is a space of real-valued functions on $E$ we  define the  following subspaces of $F(E)$. 
$F_b(E)$ (respectively $F^+(E), F_c(E),  F_{bc}(E)$)  denotes the subspace of bounded (respectively positive, with compact support, bounded with compact support) functions.
For example,  $B^{+}_{bc}(\Rd)$ denotes a set of positive, bounded,  Borel
measurable functions with compact support on $\Rd$.

\medskip 

Now let us define the explosion time of the superprocess.

\begin{restatable}[Time of explosion]{exp_times_def}{exptimesrep}\label{def:exp_time}
Let  $d\geq 1$ and let  $\{X_t\}_{t\geq 0}$ be a super-Brownian motion  with non-random initial state $X_0$.
Given nonnegative continuous function $f$ on $\Rd$,  we define the  time of explosion 
$T(X_0,f)$ of $X$ as follows
\begin{equation*}
T(X_0,f)\equiv  \inf\left\{t\geq 0: \bigcap_{n=1}^\infty \left\{\langle X_t,f \rangle\geq n\right\} \right\}.
\end{equation*}
\end{restatable}
Now we  are able to state the main result of the paper.
\begin{restatable}[Absolute continuity]{main_result}{mainresultrep}\label{thm:abs_cont}
Let $d\geq 1$ and $0<\gamma<1$. Let $\Xt$ be a
$\gamma$-super-Brownian motion  with non-random initial state $X_0$ being a finite measure on $\Rd$.  For each $t>0$, $X_t(\ud x)$ is $P-a.s.$ absolutely continuous  on the   event $\{t<T(X_0,1)\}$.
\end{restatable}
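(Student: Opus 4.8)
The plan is to establish absolute continuity of $X_t$ on the event $\{t<T(X_0,1)\}$ by controlling the log-Laplace functional through properties of solutions to the $\gamma$-log-Laplace equation \eqref{eq:loglap2}. The standard criterion is: a random finite measure $X_t$ is a.s. absolutely continuous if, for a sufficiently rich family of test functions that can approximate $\delta$-masses, the Laplace functional $E_\mu(e^{-\langle X_t,\phi\rangle})$ does not charge singular behaviour. Concretely, I would fix a smooth mollifier and for $\varepsilon>0$, $x_0\in\Rd$, consider $\phi=\phi_{\varepsilon,x_0}=\theta\,p_\varepsilon(\cdot-x_0)$ for a scale $\theta>0$, and study the behaviour of $v^{(\varepsilon)}(t,x):=v(t,x)$ solving \eqref{eq:loglap2} with $f=\phi_{\varepsilon,x_0}$ as $\varepsilon\downarrow 0$. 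Because the linear term $S_t f$ has $\langle\mu,S_t\phi_{\varepsilon,x_0}\rangle\to\theta\,(S_t\mu)(x_0)$ (a finite quantity for a.e.\ $x_0$ when $t>0$, since $S_t\mu$ has a density), absolute continuity will follow once I show the nonlinear correction term $\int_0^t \langle\mu, S_{t-s}v^\gamma(s,\cdot)\rangle\,\ud s$ stays uniformly bounded (in $\varepsilon$) for $\theta$ in bounded sets — equivalently, that $-\log E_\mu(e^{-\langle X_t,\phi_{\varepsilon,x_0}\rangle})$ remains $O(1)$ and the limit, as a function of $\theta$, is the Laplace transform of an absolutely continuous measure.

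The key analytic input is an a priori bound on $v^\gamma$. Here one exploits that $0<\gamma<1$, so $v\mapsto v^\gamma$ is concave and sublinear: from \eqref{eq:loglap2}, $v(t,x)\ge (S_tf)(x)$, and a comparison/bootstrap argument (using the results stated in the excerpt on existence, uniqueness, and continuous dependence for \eqref{eq:loglap2} with finite-measure initial data, applied to $f$ replaced by the measure $\phi_{\varepsilon,x_0}(x)\,\ud x \to \theta\,\delta_{x_0}$) should give that $v^{(\varepsilon)}(t,\cdot)$ converges, locally, to the solution $v^{(0)}(t,\cdot)$ of \eqref{eq:loglap2} with initial datum $\theta\delta_{x_0}$, and that this limiting $v^{(0)}$ is locally integrable in space at each $t>0$. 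The sublinearity is what saves us despite the infinite mean: $v^\gamma$ grows more slowly than $v$, so even though $S_t f$ may be large near $x_0$ for small $\varepsilon$, the integral $\int_0^t S_{t-s}(v^\gamma(s,\cdot))\,\ud s$ tested against the finite measure $\mu$ remains controlled. I would make this quantitative via an integral (Gronwall-type) inequality for $\sup_x v^{(\varepsilon)}(s,x)$ or, better, for the spatial $L^1_{\mathrm{loc}}$-norm against heat kernels, obtaining bounds uniform in $\varepsilon$ on each $\{s\le t\}$.

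The final step is to convert the uniform bound into absolute continuity. Having shown $\langle\mu,v^{(\varepsilon)}(t,\cdot)\rangle\to \Lambda_t(\theta\delta_{x_0})<\infty$ for (Lebesgue-) a.e.\ $x_0$, with $\theta\mapsto \Lambda_t(\theta\delta_{x_0})$ continuous and $\Lambda_t(0\cdot)=0$, I conclude that $E_\mu(e^{-\theta X_t(\{x_0\})})=\lim_{\varepsilon\to 0}E_\mu(e^{-\langle X_t,\phi_{\varepsilon,x_0}\rangle})=e^{-\Lambda_t(\theta\delta_{x_0})}\to 1$ as $\theta\to 0$ — but also, since the bound is uniform, dominated convergence gives $E_\mu(X_t(\{x_0\}))\cdot(\cdot)$ vanishes, forcing $X_t(\{x_0\})=0$ a.s.\ for a.e.\ $x_0$; integrating in $x_0$ and using Fubini yields $E_\mu\!\int_{\Rd} X_t(\{x_0\})\,\ud x_0 = 0$, i.e.\ $X_t$ has no atoms — and then a standard refinement of the same argument with $\phi$ supported on arbitrary small balls upgrades ``no atoms'' to ``absolutely continuous.'' To handle the restriction to $\{t<T(X_0,1)\}$ I would localize: work on the events $\{\langle X_t,1\rangle\le n\}$, where $X_t$ is a genuine finite (sub-probability, after normalization) measure and all the Laplace-functional identities are valid, then let $n\to\infty$; on $\{t<T(X_0,1)\}$ this exhausts the space. \textbf{The main obstacle} I anticipate is the uniform-in-$\varepsilon$ control of the nonlinear term: the initial data $\phi_{\varepsilon,x_0}$ blow up in sup-norm as $\varepsilon\to 0$, and although $\gamma<1$ damps this, one must show the singularity of $S_tf$ near $x_0$ is mild enough (for $d$ large this is a real constraint, and is presumably where the dimension-free conclusion uses sublinearity decisively) that $v^\gamma$ remains integrable against $S_{t-s}$ uniformly — this requires a careful self-improving estimate on the solution of \eqref{eq:loglap2}, which is exactly the kind of ``properties of solutions of the log-Laplace equation'' the abstract advertises.
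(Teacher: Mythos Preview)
Your analytic setup is close to the paper's: you correctly mollify, invoke continuous dependence of solutions to \eqref{eq:ll_eq} on measure-valued initial data, and anticipate that the uniform-in-$\varepsilon$ bound on $v^{(\varepsilon)}$ is the key estimate (indeed the paper proves $v(t,x,\mu)\le e^t(S_t\mu)(x)+e^t$). The gap is in how you extract absolute continuity from this.

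First, the limit $\lim_{\varepsilon\to 0}\langle X_t,\phi_{\varepsilon,x_0}\rangle$ is \emph{not} $X_t(\{x_0\})$. By the Lebesgue differentiation theorem, for Lebesgue-a.e.\ $x_0$ this limit exists and equals $\eta^{ac}_t(x_0)$, the density of the \emph{absolutely continuous part} of $X_t$ --- and this holds for \emph{every} finite measure, regardless of whether it has a singular part. So obtaining a finite Laplace transform for this limit tells you only about $\eta^{ac}_t$, not about the singular remainder. Your proposed step ``no atoms $\Rightarrow$ absolutely continuous'' is also false in general (singular continuous measures exist), so the ``standard refinement'' you allude to cannot work as stated. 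A secondary issue: in this model $\Lambda_t(\theta\delta_{x_0})$ does \emph{not} tend to $0$ as $\theta\downarrow 0$, because $V_t(\mu)>((1-\gamma)t)^{\gamma'}$ for every nonzero $\mu$; this reflects the positive explosion probability, and it invalidates the line ``$\Lambda_t(0\cdot)=0$''.

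What the paper does instead, and what your argument is missing, is a \emph{comparison in law} between $\langle X_t,f\rangle$ and $\int \eta^{ac}_t(x)f(x)\,\ud x$. Having computed (as you essentially propose) the joint Laplace transform of $(\eta^{ac}_t(z_1),\ldots,\eta^{ac}_t(z_N))$ via the log-Laplace equation with initial datum $\sum a_i\delta_{z_i}$, the paper passes from finite point configurations to a continuous test function $f$ by a law-of-large-numbers trick: sample $z_i$ i.i.d.\ uniformly in a large ball, so that $\frac{|B_n|}{N}\sum f(z_i)\delta_{z_i}\weakconv f\mathbf 1_{B_n}\,\ud x$, and combine this with continuous dependence of $V_t(\cdot)$ on initial data. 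This yields
\[
E\big(\mathbf 1_{\{t<T(1)\}}\,e^{-\int \eta^{ac}_t f}\big)=e^{-\langle X_0,V_t(f)\rangle}=E\big(\mathbf 1_{\{t<T(1)\}}\,e^{-\langle X_t,f\rangle}\big).
\]
Since $\eta^{ac}_t$ is the density of the absolutely continuous part, one always has $\langle X_t,f\rangle\ge\int \eta^{ac}_t f$ a.s.; equality in distribution together with this a.s.\ inequality forces a.s.\ equality, hence the singular part vanishes. This ``equal law plus pointwise domination'' step is the idea you need to close the argument.
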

The proof of this theorem is based on the properties of solutions  to the \LL\ equation corresponding to the process $\Xt$.
These properties are stated in  Theorem \ref {thm:semi}. This theorem extends results of Aguirre and Escobedo (see \cite{AgEs}) for the case of non-zero measure-valued initial conditions. The entire Section \ref{chap:semi}
is devoted to the proof  of these properties.
In Section \ref{sec:exp_times} we will  prove that for any  nonnegative, non-zero  continuous function $f$ on $\Rd$,
\begin{align*}
T(X_0,f) = T(X_0,1),\: P-\mathrm{a.s.}
\end{align*}
This property  allows us to define the  density of the superprocess $\Xt$ for a fixed time $t>0$.
In Section  \ref{sec:abs_cont} we conclude the  proof of   Theorem~\ref{thm:abs_cont} --- the main result of the paper.

\section{Semilinear heat equation}
For the rest of the paper fix $\gamma\in (0,1)$ and arbitrary dimension $d\geq 1$.  
One of the main tools   for investigating the $\gamma$-super-Brownian motion  is the log-Laplace equation
\par
\begin{equation}\label{eq:ll_eq_int}
v(t,x) = \left(S_t f\right)(x) + \int_0^t (S_{t-s}v^\gamma(s,\cdot))(x)\ud s.
\end{equation}
Usually in the literature~\eqref{eq:ll_eq_int} is studied for $f$ being a non-negative function. In the sequel we will consider~\eqref{eq:ll_eq_int} also with $f$ being a measure.   
\par
Before we discuss properties of~\eqref{eq:ll_eq_int} we need to introduce some notation. 
For a topological space $S$,  $\mathcal{B}(S)$ will denote the $Borel$ $\sigma\textrm{-algebra}$ on the  space~$S$.

We denote by $L^{p,w}(\Rd) $ (for $p=1$ or $p =\infty$) \label{def:L_ap} \label{def:l_a_p}a Banach  space of (equivalence classes of) measurable functions  on $\mathbb{R}^d$ with the norms:
\begin{align*}
\|f\|_{1,w}&\equiv \int_{\Rd} |w(x)f(x)| \ud x,  \: \:\textnormal{for}\:\: p=1\\
\left\|f\right\|_{\infty,\omega} &\equiv \inf \{M: Leb(x:|w(x)f(x)|>M)=0\},\:
\textnormal{for} \:p = \infty,
\end{align*}
\label{def:norm_l_ap}
where
\begin{align*}
w(x)\equiv C_w e^{-|x|},\:\:\int_{\Rd} w(x)\ud x = 1,
\end{align*}
and $Leb$ denotes the Lebesgue measure on $\Rd$.
$L^{p,w}_+(\Rd)$ (respectively $L^{p}_+(\Rd)$)  will denote the nonnegative elements of $L^{p,w}(\Rd)$ (respectively $L^{p}(\Rd)$). 

Given $L^{p,w}(\Rd)$ (for $p=1$ or $p =\infty$)  we define the  Banach  space 
$L^\infty_{loc}((0,\infty),L^{p,w}(\Rd))$   of (equivalent classes of)  measurable functions on $(0,\infty)\times \Rd$  as follows:
\vspace*{8pt}
$f \in L^\infty_{loc}\left((0,\infty),L^{p,w}(\Rd)\right)$ if and only if
 $f(t,\cdot) \in L^{p,w}(\Rd)$ for any fixed $t$ and
 \begin{equation*}
t\to \|f(t)\|_{p,w}\in L^\infty([a,b])
\end{equation*}
for any compact  interval $[a,b]\in (0,\infty)$. Similarly $L^\infty_{loc}((0,\infty),L^{p,w}_+(\Rd))$ is defined.

By $M_F(E)$ (respectively $\CFE$) we  denote the space  of finite measures and  finite   (respectively finite signed) signed measures on  a Polish space $E$ equipped with the topology of the weak convergence. We write
\begin{equation*}
\mu_n \weakconv \mu,\quad \text{as $n\rightarrow\infty$,}
\end{equation*}
if  the sequence $\{\mu\}_{n=1}^\infty$ of finite measures or finite signed measures  weakly converges to a finite measure $\mu$.

If $F$ is a set of functions or  measures then $\overset{\circ}{F}$ \label{def_circ} denotes this set without zero element, that is $\overset{\circ}{F}= F\setminus \{0\}$. If $F$ is a topological space then, topology of  $\overset{\circ}{F}$ is inherited from $F$.
For example $\MFOE$  is a space of finite  non-zero measures on Polish space $E$ with the topology inherited from $M_F(E)$.

With all this notation at hand we can get back to~ \eqref{eq:ll_eq_int}. 
Equation \eqref{eq:ll_eq_int} was studied by  Aguirre and Escobedo in~\cite{AgEs}).
We state some of their results in the  following  theorem.
\begin{theorem}[Aguirre and Escobedo \cite{AgEs}]\label{thm:ag_es_res}
For any not identically zero   $f \in L^{\infty,w}_+(\Rd)$ there exists the unique   solution $v(t,x,f)$ of  equation \eqref{eq:ll_eq_int} such that
\begin{itemize}
\item[$(1)$]
$v(\cdot,\cdot,f) \in C^+\left((0,\infty)\times \Rd\right)\cap L^\infty_{loc}\left((0,\infty),L^{\infty,w}_+(\Rd)\right)$;

\item[$(2)$]

$v(t,x,f)> ((1-\gamma)t)^{1/(1-\gamma)},\:(t,x) \in (0,\infty)\times \Rd$,

\item[$(3$]

for $i=1,2$, let $v(t,x,f_i)$ be the solution to \eqref{eq:ll_eq_int} with initial condition $v(0,\cdot,f_i)=f_i$.
If $f_1(x)\leq f_2(x),\: a.e.\: x$,  then
\begin{align*}
v(t,x,f_1) \leq v(t,x,f_2),\: \forall (t,x) \in (0,\infty)\times \Rd.
\end{align*}
\item[$(4)$] $\lim_{t\to 0} v(t,\cdot,f) = f,\: \textnormal{for}\: a.e. \: x\in \Rd$;

\item[$(5)$]

for any fixed $(t,x)\in (0,\infty)\times \Rd$ the mapping
\begin{equation*}
v(t,x,\cdot): L^{\infty,w}_+(\Rd) \mapsto \mathbb{R}_{++}
\end{equation*}
is continuous.
Here $\mathbb{R}_{++}\equiv (0,\infty).$
\end{itemize}
\end{theorem}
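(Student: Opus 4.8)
The plan is to realize the solution as the increasing limit of Picard iterates, exploiting that $z\mapsto z^\gamma$ is increasing and concave on $[0,\infty)$, and then to obtain $(2)$--$(5)$ from comparison arguments; the genuinely delicate point, which I treat last, is uniqueness.

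\emph{Existence and regularity.} Put $v_0(t,x):=(S_tf)(x)$ and, inductively, $v_{n+1}(t,x):=(S_tf)(x)+\int_0^t\bigl(S_{t-s}v_n^\gamma(s,\cdot)\bigr)(x)\,ds$. Since $v_1=v_0+(\textrm{nonnegative})\ge v_0$ and $z\mapsto z^\gamma$ is nondecreasing, induction gives $0\le v_0\le v_1\le v_2\le\cdots$. For a majorant I would exhibit a supersolution of the form $\hat v(t,x)=e^{\lambda t}(S_tf)(x)+\psi(t)$, with $\psi$ solving the scalar ODE $\psi'=\lambda\psi+\psi^\gamma+c_\lambda$, $\psi(0)=0$, where $c_\lambda:=\sup_{z\ge0}(z^\gamma-\lambda z)<\infty$: using $(a+b)^\gamma\le a^\gamma+b^\gamma$ one checks that $\hat v(t)\ge S_tf+\int_0^t S_{t-s}\hat v^\gamma(s)\,ds$ for all $t$, hence $v_n\le\hat v$ for every $n$ by induction, and $\|\hat v(t)\|_{\infty,w}<\infty$ locally in $t$ because $\|S_tg\|_{\infty,w}\le C_t\|g\|_{\infty,w}$ with $C_t:=\int_{\Rd}e^{|z|}p_t(z)\,dz<\infty$. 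Therefore $v:=\sup_n v_n$ exists, belongs to $L^\infty_{loc}\bigl((0,\infty),L^{\infty,w}_+(\Rd)\bigr)$, and solves \eqref{eq:ll_eq_int} by monotone convergence; continuity of $v$ on $(0,\infty)\times\Rd$ then follows by the standard parabolic bootstrap (rewrite $v(t)=S_{t-\varepsilon}v(\varepsilon)+\int_\varepsilon^t S_{t-s}v^\gamma(s)\,ds$ on $\{t\ge\varepsilon\}$, use the smoothing of $\{S_t\}$ together with local boundedness and the lower bound established next).

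\emph{Properties $(2)$--$(4)$ and minimality.} For $(2)$, comparing the iterates with the space-homogeneous iteration $y_0\equiv0$, $y_{n+1}(t)=\int_0^t y_n^\gamma(s)\,ds$ --- which increases to $((1-\gamma)t)^{1/(1-\gamma)}$ --- gives $v(t,x)\ge(S_tf)(x)+((1-\gamma)t)^{1/(1-\gamma)}>((1-\gamma)t)^{1/(1-\gamma)}$, the strict inequality because $f\not\equiv0$ forces $S_tf>0$ for $t>0$. For $(3)$, monotonicity of $S_t$ and of $z\mapsto z^\gamma$ yields $v_n(\cdot,\cdot,f_1)\le v_n(\cdot,\cdot,f_2)$ when $f_1\le f_2$ a.e., and one passes to the limit. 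For $(4)$, $S_tf\to f$ a.e.\ as $t\to0$, while the Duhamel term tends to $0$ even in $\|\cdot\|_{\infty,w}$: from $\|v^\gamma(s)\|_{\infty,w}\le\|(e^{\lambda s}S_sf)^\gamma\|_{\infty,w}+\psi(s)^\gamma$ and the boundedness of $\|(S_sf)^\gamma\|_{\infty,w}$ as $s\to0$ (because $w$ is bounded) one gets $\bigl\|\int_0^t S_{t-s}v^\gamma(s)\,ds\bigr\|_{\infty,w}=O(t)$, so in fact $\|v(t,\cdot)-S_tf\|_{\infty,w}=O(t)$. Finally the constructed $v$ is \emph{minimal} in the class $C^+((0,\infty)\times\Rd)\cap L^\infty_{loc}((0,\infty),L^{\infty,w}_+(\Rd))$: any solution $\tilde v$ satisfies $\tilde v\ge S_\cdot f=v_0$ from its own integral equation, hence $\tilde v\ge v_n$ for every $n$, hence $\tilde v\ge v$.

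\emph{Uniqueness (the main obstacle) and $(5)$.} To obtain the reverse inequality $\tilde v\le v$, the natural route is again concavity: for $\varepsilon>0$, $(1+\varepsilon)v$ is a supersolution (since $(1+\varepsilon)^{1-\gamma}\ge1$) with the strictly larger datum $(1+\varepsilon)f$, so one hopes for $\tilde v\le(1+\varepsilon)v$ and then $\varepsilon\downarrow0$. Writing $h:=\tilde v-(1+\varepsilon)v$, using $\tilde v^\gamma-((1+\varepsilon)v)^\gamma\le\gamma((1+\varepsilon)v)^{\gamma-1}h^+$ and the lower bound $(1+\varepsilon)v(s,\cdot)\ge((1-\gamma)s)^{1/(1-\gamma)}$, one is led to
\[
h^+(t,x)\le\Bigl(\int_0^t S_{t-s}\Bigl(\tfrac{\gamma}{(1-\gamma)s}\,h^+(s,\cdot)\Bigr)(x)\,ds-\varepsilon(S_tf)(x)\Bigr)^{\!+},
\]
and, since $S_tf>0$, one wants a Gronwall-type argument to force $h^+\equiv0$. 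The crux --- and, I expect, the technical heart of the theorem --- is that the effective Lipschitz constant of $z\mapsto z^\gamma$ on the range $[((1-\gamma)t)^{1/(1-\gamma)},\infty)$ is $\gamma((1-\gamma)t)^{-1}$, which inserts the \emph{critical}, non-integrable (Hardy-type) weight $1/s$ into the Duhamel kernel: the naive Gronwall/Picard iteration does not close --- even the iterated kernels fail to be integrable at $s=0$, and shrinking the time horizon helps only when $\gamma<\tfrac12$. Closing the argument requires sharper short-time control: the a priori bound $\|\tilde v(s,\cdot)-S_sf\|_{\infty,w}=O(s)$ for any solution (its upper half via comparison with $\hat v$), combined with a contraction run in a space--time norm weighted so as to absorb the $1/s$ factor, or else a direct argument exploiting the sharp lower bound $\tilde v(s,\cdot)\ge S_sf+((1-\gamma)s)^{1/(1-\gamma)}$ against the matching $O(s)$ upper bound. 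Property $(5)$ follows in parallel: if $f_n\to f$ in $L^{\infty,w}_+(\Rd)$ with $\sup_n\|f_n\|_{\infty,w}<\infty$, then $S_tf_n\to S_tf$, and both $v(\cdot,f_n)\le\hat v$ and $v(t,x,f_n)\ge((1-\gamma)t)^{1/(1-\gamma)}$ hold uniformly in $n$, so on each strip $\{\varepsilon\le t\le T\}$ the nonlinearity is uniformly Lipschitz and Gronwall gives convergence there, while the contribution of $\{t\le\varepsilon\}$ is uniformly $O(\varepsilon)$; the passage $\varepsilon\to0$ rests on the same short-time analysis. Evaluating at a fixed $(t,x)$ and recalling $v(t,x,\cdot)>((1-\gamma)t)^{1/(1-\gamma)}>0$ then gives continuity of $v(t,x,\cdot)\colon L^{\infty,w}_+(\Rd)\to\mathbb{R}_{++}$.
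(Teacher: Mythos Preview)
First, note that the paper does not itself prove this theorem: it is quoted from Aguirre--Escobedo \cite{AgEs}. What the paper \emph{does} prove is the extension Theorem~\ref{thm:semi} to measure-valued initial data (Section~\ref{chap:semi}), and those arguments are the natural point of comparison.

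Your existence argument via monotone Picard iterates matches the paper's (Proposition~\ref{prop:existence}), though the paper uses the simpler majorant $e^t(S_t\mu)(x)+e^t$ obtained by comparison with the linear equation $u_t=\frac12\Delta u+u+1$ rather than your $\hat v$. Your treatment of $(2)$--$(4)$ and of minimality is fine and parallels the paper.

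There is a genuine gap in your uniqueness argument, and you have correctly located it: the critical weight $\gamma/((1-\gamma)s)$ makes the Gronwall step inconclusive, and your proposed fixes (weighted contraction, matching the $O(s)$ upper bound against the lower bound) are sketched but not carried out. The paper's comparison lemma (Lemma~\ref{lm:comp}) takes a different route: instead of linearizing via $a^\gamma-b^\gamma\le\gamma b^{\gamma-1}(a-b)^+$, it uses the subadditivity estimate $a^\gamma-b^\gamma\le((a-b)_+)^\gamma$ to reach
\[
g_+(t)\le\int_0^t S_{t-s}\bigl((g_+(s))^\gamma\bigr)\,ds,
\]
and then defers to the argument of Theorem~2.8 in \cite{AgEs} (combined with the strict lower bound of Lemma~\ref{lm:uniq_inequality}) to conclude $g_+\equiv0$. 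So the path to uniqueness is not your $(1+\varepsilon)v$-supersolution trick but a direct sublinear comparison whose closing step lives in \cite{AgEs}.

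For $(5)$ there is a much cleaner argument that sidesteps the $1/s$ difficulty entirely. The paper (Lemma~\ref{lm:concave_mapping}, Proposition~\ref{prop:cont_init_data}) shows that $\mu\mapsto v(t,x,\mu)$ is \emph{concave} --- an immediate consequence of concavity of $z\mapsto z^\gamma$ together with the comparison lemma --- and then invokes the general fact (Lemma~2.1 in Ekeland--T\'emam \cite{ET}) that a nonnegative concave function on a topological vector space is automatically continuous. This one-line conclusion replaces your entire $\varepsilon$-splitting and Gronwall scheme, and it is robust enough to work for measure-valued data as well.
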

\begin{remark*}
In fact, Aguirre and Escobedo prove the  above theorem for a more general class of initial data.
\end{remark*}
We extend the results in  Theorem \ref{thm:semi} for the case of not identically zero measure-valued initial conditions.
Consider the following equation:
\begin{equation}\label{eq:ll_eq}
v(t,x) = (S_t\mu)(x) + \int_0^t (S_{t-s} v^\gamma(\cdot,s))\ud s, \: x \in \Rd,\: t>0,
\end{equation}
where $\mu \in {\MFORd}$, and again $d\geq 1$ is an arbitrary dimension. We set 
$S_t\mu(x)=\int_\Rd p_t(x-y)\mu(dy), x\in \Rd$. 
In order to stress dependence of the  solutions of this equation on initial data we will sometimes write
$v(t,x,\mu)$. In what follows we will also use the following notation for solutions of \eqref{eq:ll_eq}:
\begin{equation}
V_t(\mu)(x)\equiv v(t,x,\mu),\quad t>0, x\in \Rd, 
\end{equation}
for $\mu\in \MFORd$ or being a non-negative, not identically zero function.

\par
Before we state the main result of this section, 
 let us  define  the constant $\gamma'$ in terms of $\gamma $ as follows:
\begin{equation*}
\gamma' = \frac{1}{1-\gamma}.
\end{equation*}

\begin{theorem}[Existence, uniqueness and dependence on initial data]\label{thm:semi}
 For \par any $\mu \in \MFORd$,   equation \eqref{eq:ll_eq} has the unique solution $v(t,x)$ such that
\begin{equation*}
v(\cdot,\cdot,\mu) \in L_{loc} ^\infty\left((0,\infty) , L^{1,w}_+(\mathbb{R}^d)\right)\cap C^+((0,\infty)\times \Rd)
\end{equation*}
and
\begin{equation}\label{eq:sol_ineq}
\left((1-\gamma)t \right)^{\gamma'}<v(t,x,\mu)\leq e^t\left(S_t\mu \right)(x) + e^t,\: 0<t<\infty.
\end{equation}
Moreover, this solution continuously depends on initial data:
if a sequence $\{\mu_n\}_{n=1}^\infty$  from $\MFORd$  converges weakly to $\mu\in \MFORd$ then
\begin{equation*}
 \lim_{n\to \infty } v(t,x,\mu_n)\to v(t,x,\mu),
\end{equation*}
for any $(t,x) \in (0,\infty)\times \Rd$.
\end{theorem}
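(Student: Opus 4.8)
The plan is to bootstrap from the function-valued theory (Theorem~\ref{thm:ag_es_res}) to the measure-valued case by approximating $\mu$ from below by bounded functions, and simultaneously to control solutions from above by a comparison argument that yields the explicit upper bound in~\eqref{eq:sol_ineq}. First I would establish the upper bound. Let $u(t,x) \defeq e^t (S_t\mu)(x) + e^t$. Using the semigroup property $S_{t-s}S_s\mu = S_t\mu$ one computes that the right-hand side of~\eqref{eq:ll_eq} evaluated at $u$ is $S_t\mu + \int_0^t S_{t-s}(u^\gamma(s,\cdot))\,\ud s$, and since $u\ge e^t\ge 1$ and $0<\gamma<1$ we have $u^\gamma \le u \le e^s(S_s\mu) + e^s$, so $\int_0^t S_{t-s}(u^\gamma(s))\,\ud s \le \int_0^t e^s\big((S_t\mu)+1\big)\,\ud s \le (e^t-1)\big((S_t\mu)+1\big) \le u - S_t\mu$; hence $u$ is a supersolution. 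The lower bound $v(t,x,\mu) > ((1-\gamma)t)^{\gamma'}$ comes for free: the spatially homogeneous function $\underline{v}(t) = ((1-\gamma)t)^{\gamma'}$ solves $\underline{v}' = \underline{v}^\gamma$ with $\underline v(0)=0$, so it is a subsolution dominated by any solution with nonnegative initial data. The monotonicity/comparison principle that makes these inequalities rigorous is exactly the kind of statement in part~(3) of Theorem~\ref{thm:ag_es_res}, extended to sub/supersolutions; I would either invoke an abstract monotone-iteration argument on the mild formulation (the map $w \mapsto S_t\mu + \int_0^t S_{t-s}w^\gamma\,\ud s$ is monotone on $L^{1,w}_+$), or cite Watanabe~\cite{Wat1}.

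Next, existence. Set $f_n \defeq (S_{1/n}\mu)\wedge n \in L^{\infty,w}_+(\Rd)$; this is a bounded function, nonzero for $n$ large, and $f_n \uparrow S_0\mu = \mu$ in an appropriate sense (more precisely $f_n(x) \to$ the density part, and one should really approximate so that $\int f_n\,\ud x \uparrow \langle\mu,1\rangle$ — e.g. mollify $\mu$ and truncate). Let $v_n \defeq v(\cdot,\cdot,f_n)$ be the solutions from Theorem~\ref{thm:ag_es_res}. By part~(3) the sequence $v_n$ is monotone (after passing to a suitable monotone approximating sequence), so $v \defeq \lim_n v_n$ exists pointwise on $(0,\infty)\times\Rd$; the uniform upper bound $v_n(t,x) \le e^t(S_t f_n)(x) + e^t \le e^t(S_t\mu)(x)+e^t$ gives domination, so by monotone convergence inside the time integral in~\eqref{eq:ll_eq} (using $S_{t-s}(S_s\mu) = S_t\mu \in L^{1,w}$ and $v_n^\gamma \le v_n \le$ the same dominating function up to the constant, which is $L^{1,w}_{loc}$-integrable in $s$) we may pass to the limit and conclude that $v$ solves~\eqref{eq:ll_eq}. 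Continuity of $v$ on $(0,\infty)\times\Rd$ and membership in $L^\infty_{loc}((0,\infty),L^{1,w}_+)$ follow by re-running the mild-equation regularity estimates: on $[a,b]\times\Rd$ the data $S_t\mu$ is smooth and bounded by $\|p_a\|_\infty\langle\mu,1\rangle$ and the Duhamel term is then a locally-Lipschitz-in-$v$ smoothing operator (here $\gamma<1$ is no obstruction since $v$ is bounded below by $((1-\gamma)a)^{\gamma'}>0$, so $v\mapsto v^\gamma$ is Lipschitz on the relevant range).

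For uniqueness I would argue that any solution $\tilde v$ in the stated class is sandwiched: it dominates the subsolution $((1-\gamma)t)^{\gamma'}$ and, comparing with $v_n$ on the time interval $[1/n,\infty)$ where $\tilde v(1/n,\cdot)$ is a genuine $L^{\infty,w}_+$ function $\ge f_n$ appropriately, Gronwall on the difference $\tilde v - v$ (again legitimate because both are bounded away from $0$, making $v^\gamma$ Lipschitz) forces $\tilde v = v$; the key point is that the singularity of $s^\gamma$ near $s=0$ never appears because solutions are strictly positive for $t>0$. Finally, continuous dependence on initial data: if $\mu_n \weakconv \mu$ in $\MFORd$, then $S_t\mu_n(x) \to S_t\mu(x)$ for every $(t,x)$ (the kernel $p_t(x-\cdot)$ is bounded continuous) and $\sup_n \langle\mu_n,1\rangle < \infty$, so the data converge locally uniformly on $[a,b]\times\Rd$; feeding this into the mild equation and using the uniform upper bound plus the local Lipschitz estimate for the nonlinearity gives $v(t,x,\mu_n) \to v(t,x,\mu)$ pointwise — here I would localize in a large ball and use the Gaussian tail of $p_t$ together with the exponential weight $w$ to control the contribution from far away.

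The main obstacle I anticipate is making the monotone approximation genuinely work at the level of the \emph{measure} $\mu$ rather than its absolutely continuous part: a general finite measure has no density, so "$f_n \uparrow \mu$" must be interpreted through $S_t$ (i.e. $S_t f_n \uparrow S_t\mu$ uniformly on compacts in $(t,x)$ for $t$ bounded away from $0$), and one must check that the limiting $v$ indeed satisfies~\eqref{eq:ll_eq} with the singular-at-$t=0$ data $S_t\mu$ and not merely some regularized version — this is where the explicit upper bound $e^t S_t\mu + e^t$ and the integrability of $t\mapsto S_t\mu$ against the weight $w$ on intervals $[0,T]$ do the real work. A secondary technical point is verifying the comparison principle for mild sub/supersolutions of~\eqref{eq:ll_eq} in this weighted $L^1$ setting without assuming boundedness, which I would handle by truncating and passing to the limit.
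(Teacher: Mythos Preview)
Your proposal is broadly sound but diverges from the paper in two places. For existence, the paper does not approximate $\mu$ by functions: it runs the Picard iteration $v_1 = S_t\mu$, $v_{n+1} = S_t\mu + \int_0^t S_{t-s}v_n^\gamma\,\ud s$ directly with the measure datum, gets monotonicity for free, and bounds the iterates above by those of the \emph{linear} equation with $\Psi(\lambda)=\lambda+1$, whose explicit limit is $e^t(S_t\mu)(x)+e^t-1$; this sidesteps entirely the monotone-approximation difficulty you correctly flag. For continuous dependence, the paper's argument is quite different and worth knowing: for fixed $(t,x)$ it shows the map $\mu \mapsto v(t,x,\mu)$ is \emph{concave} (set $u=\lambda v(\cdot,\mu)+(1-\lambda)v(\cdot,\nu)$, use concavity of $s\mapsto s^\gamma$ to see $u$ is a subsolution with datum $\lambda\mu+(1-\lambda)\nu$, then apply the comparison lemma), and then invokes the general fact that a nonnegative concave function on a topological vector space is automatically continuous (Ekeland--T\'emam); no Gronwall is needed. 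Your Gronwall route has an obstacle you do not address: the Lipschitz constant of $s\mapsto s^\gamma$ above the lower barrier $((1-\gamma)\tau)^{\gamma'}$ is of order $\gamma/((1-\gamma)\tau)$, so the Gronwall kernel $\int_0^t \tau^{-1}\,\ud\tau$ diverges at $\tau=0$. This is repairable---split $[0,t]$ at $\epsilon$, use the uniform upper bound~\eqref{eq:sol_ineq} to make the $[0,\epsilon]$ contribution $O(\epsilon)$ independently of $n$, then run Gronwall on $[\epsilon,t]$---but the concavity argument avoids the issue altogether.
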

\begin{remark}
Since for any $t \in (0,\infty)$,
\begin{align*}
e^t\left(S_t\mu \right)(x) + e^t \in L^{1,w}(\mathbb{R}^d),
\end{align*}
it easily follows  from   inequality \eqref{eq:sol_ineq}  that the sequence of solutions\par $\{v(\cdot,\cdot,\mu_n)\}_{n=1}^\infty$ which also converges to $v(\cdot,\cdot,\mu)$ in $L_{loc} ^\infty\left(((0,\infty) , L^{1,w}_+(\mathbb{R}^d)\right)\cap C^+((0,\infty)\times \Rd)$.
\end{remark}
 The proof of the next lemma is trivial and hence it is omitted.
\begin{lemma}\label{lm:simp_ineq}
Let $\mu\in\MFRd$.  Then, for any $t\in (0,\infty)$,
\begin{equation*}
\left(S_t\mu\right)(x) \leq \frac{\mu(\Rd)}{(2\pi t)^{d/2}},\: \forall x \in \Rd.
\end{equation*}
\end{lemma}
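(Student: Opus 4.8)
The plan is to bound the Brownian transition density $p_t$ pointwise and then integrate against $\mu$. Recall that $(S_t\mu)(x) = \int_\Rd p_t(x-y)\,\mu(\ud y)$, where $p_t(z) = (2\pi t)^{-d/2}\exp(-|z|^2/(2t))$ is the Gaussian density with generator $\tfrac12\Delta$. First I would observe that the exponential factor is always at most $1$, so that
\[
p_t(z) \le \frac{1}{(2\pi t)^{d/2}}, \qquad \forall\, z\in\Rd,\ t>0,
\]
with equality at $z=0$.

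Plugging this bound into the integral representation of $S_t\mu$ and using that $\mu$ is a nonnegative finite measure, I would get
\[
(S_t\mu)(x) = \int_\Rd p_t(x-y)\,\mu(\ud y) \le \frac{1}{(2\pi t)^{d/2}}\int_\Rd \mu(\ud y) = \frac{\mu(\Rd)}{(2\pi t)^{d/2}},
\]
which holds for every $x\in\Rd$ and every $t>0$. This is exactly the claimed inequality.

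There is no genuine obstacle here — the only ingredients are the explicit form of the heat kernel and monotonicity of the integral against a positive measure — which is why the authors omit the proof. The one point worth recording is that nonnegativity of $\mu$ is what makes the step legitimate (for a signed measure one would only get the bound with $\mu(\Rd)$ replaced by the total variation $\|\mu\|_{\mathrm{TV}}$); since the lemma is stated for $\mu\in M_F(\Rd)$, a finite nonnegative measure, this is precisely the form needed later, e.g.\ to control $S_t\mu$ in the upper estimate \eqref{eq:sol_ineq}.
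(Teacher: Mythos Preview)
Your proof is correct and is exactly the trivial argument the paper has in mind when it omits the proof: bound the Gaussian kernel by its supremum $(2\pi t)^{-d/2}$ and integrate against the nonnegative measure $\mu$. Your remark on the role of nonnegativity is apt but not needed for the lemma as stated.
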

Now we are ready to state the corollary to  Theorem \ref{thm:semi}.
\begin{corollary}\label{cor:conv_bounded_mesure}
Let $\{\mu_n\}_{n=1}^\infty \subset \MFORd$ be a sequence of measures that   converges weakly to $\mu \in \MFORd$ and let $v(\cdot,\cdot,\mu_n)$ be the corresponding solutions of  \eqref{eq:ll_eq}. Then, for any $\chi \in \MFRd$ and $t\in (0,\infty)$,
\begin{equation}\label{eq:conv_bound_measure}
\lim_{n\to \infty} \int_{\Rd} v(t,x,\mu_n)\chi(\ud x) = \int_{\Rd} v(t,x,\mu)\chi(\ud x).
\end{equation}
\end{corollary}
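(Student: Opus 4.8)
The plan is to obtain \eqref{eq:conv_bound_measure} from a single application of the dominated convergence theorem, taking the finite measure $\chi$ as the underlying measure. Theorem~\ref{thm:semi} already provides everything one needs on the "convergence" side: it gives the pointwise limit $v(t,x,\mu_n)\to v(t,x,\mu)$ for every $(t,x)\in(0,\infty)\times\Rd$, and it asserts that each map $x\mapsto v(t,x,\mu_n)$ is continuous, hence Borel measurable and in particular $\chi$-measurable. Thus the only remaining task is to exhibit, for each fixed $t>0$, a single $\chi$-integrable function dominating the whole family $\{v(t,\cdot,\mu_n)\}_{n\ge 1}$.

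To construct such a dominating function I would combine the a priori bound already established in Theorem~\ref{thm:semi} with Lemma~\ref{lm:simp_ineq}. The upper bound in \eqref{eq:sol_ineq} gives $v(t,x,\mu_n)\le e^t\,(S_t\mu_n)(x)+e^t$ for all $x\in\Rd$, and Lemma~\ref{lm:simp_ineq} gives $(S_t\mu_n)(x)\le \mu_n(\Rd)/(2\pi t)^{d/2}$ uniformly in $x$. Since $\mu_n\weakconv\mu$, testing against the constant function $1\in C_b(\Rd)$ shows $\mu_n(\Rd)\to\mu(\Rd)$, so that $K\defeq\sup_{n\ge 1}\mu_n(\Rd)<\infty$. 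Putting these together, for each fixed $t>0$,
\begin{equation*}
v(t,x,\mu_n)\le e^t\,\frac{K}{(2\pi t)^{d/2}}+e^t\defeq C_t,\qquad \forall\, n\ge 1,\ \forall\, x\in\Rd,
\end{equation*}
and the constant $C_t$ is integrable with respect to the finite measure $\chi$, with $\int_{\Rd}C_t\,\chi(\ud x)=C_t\,\chi(\Rd)<\infty$.

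With the pointwise convergence and the uniform, $\chi$-integrable majorant $C_t$ in hand, the dominated convergence theorem applied to $\int_{\Rd} v(t,x,\mu_n)\,\chi(\ud x)$ yields \eqref{eq:conv_bound_measure} directly. The argument is essentially routine, and I do not anticipate a genuine obstacle: its entire force comes from the a priori estimate \eqref{eq:sol_ineq} proved in Theorem~\ref{thm:semi}. The only point deserving a moment's care is the step passing from weak convergence of the $\mu_n$ to uniform boundedness of their total masses, which is immediate from the definition of weak convergence of finite measures upon choosing the test function $f\equiv 1$.
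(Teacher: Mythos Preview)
Your proposal is correct and follows essentially the same route as the paper's own proof: combine the a priori bound \eqref{eq:sol_ineq} with Lemma~\ref{lm:simp_ineq} and the uniform boundedness of $\mu_n(\Rd)$ (from weak convergence) to get a single constant majorant, then apply the bounded/dominated convergence theorem with respect to the finite measure~$\chi$, using the pointwise convergence supplied by Theorem~\ref{thm:semi}.
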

\begin{proof} 
By Theorem \ref{thm:semi} we have
\begin{equation*}
v(t,x,\mu)\leq e^t\left(S_t\mu \right)(x) + e^t,n=1,2,\ldots,\: 0<t<\infty.
\end{equation*}
Since the sequence $\{\mu_n\}_{n=1}^\infty$  converges weakly to $\mu$, then by Lemma \ref{lm:simp_ineq}
\begin{equation*}
\begin{split}
\sup_{x \in \Rd} \left(e^t\left(S_t\mu\right)(x) + e^t\right)\leq
e^t\left( \frac{1}{(2\pi t)^{d/2}} \sup_{n\geq 1}\mu_n(\Rd) + 1  \right)< \infty.
\end{split}
\end{equation*}
Thus,  we conclude that the sequence
$\{v(t,\cdot,\mu_n)\}_{n=1}^\infty$ is bounded.
Also by  Theorem \ref{thm:semi} $\{v(t,\cdot,\mu_n)\}_{n=1}^\infty$ converges pointwise to $v(t,\cdot,\mu)$.
 Hence,  by the  bounded convergence theorem,  \eqref{eq:conv_bound_measure} follows.
\end{proof}

Theorem~\ref{thm:semi} will be proved in Section~\ref{chap:semi}.

\section{Proof of Theorem~\ref{thm:abs_cont}}\label{chap:abs_cont}
In this section we prove the main result of this paper --- absolute continuity of the \SBM\  $X$ with the branching mechanism $v\mapsto v^\gamma$, for $\gamma \in (0,1)$.  

In~Section~\ref{sec:exp_times} we investigate explosion time for the $\gamma$-super-Brownian: this is necessary for the proof of  Theorem~\ref{thm:abs_cont} that will be concluded in
Section~\ref{sec:abs_cont}.  
\subsection{Explosion times}\label{sec:exp_times}
As we will see for any $t>0$, the  $\gamma$-\SBM\  $X=\Xt$  explodes  by time  $t$ with non-zero probability.
In this section we  investigate the  distribution  of the explosion times.
We assume that  $X$ is defined on probability space $(\Omega, P, \mathcal{F}, \mathcal{F}_t)$ and adapted to  filtration
$\{\mathcal{F}_t\}_{t\geq 0}$. We also assume that the   initial state $X_0$ of $X$  is a non-random finite measure.
\begin{remark}\label{rm:strong_markov}
By Corollary $4.3.2$, in~\cite{Daw1},  it is easy to show  that $\Xt$ is a Feller process and therefore it  has a strong Markov property.
\end{remark}

The next lemma states the elementary properties of the explosion times.
The proofs are simple and easily follow  from the definition, so they are omitted.
\begin{lemma}\label{lm:expl_times_1}
\begin{enumerate}[(1)]
\item For any function $f\in  \overset{\circ} {L}\mbox{}^{\infty}_+(\Rd)$, and any  $a\in (0,\infty)$,
\begin{align*}
T(X_0,f) = T(X_0, af),\: P-\textnormal{a.s.}
\end{align*}
 \item For any $f\in   \overset{\circ} {L}\mbox{}^{\infty}_+(\Rd)$,
 \begin{equation*}
 T(X_0,1)\leq T(X_0,f) ,\: P-\textnormal{a.s.}
 \end{equation*}
\end{enumerate}
\end{lemma}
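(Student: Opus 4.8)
The plan is to derive both assertions directly from Definition \ref{def:exp_time}, unwinding the meaning of the event $\bigcap_{n=1}^\infty\{\langle X_t,f\rangle\ge n\}$. First I would observe that for a fixed $\omega$ this event equals $\{\langle X_t(\omega),f\rangle=+\infty\}$, so that $T(X_0,f)(\omega)=\inf\{t\ge 0:\langle X_t(\omega),f\rangle=\infty\}$; thus the explosion time depends on $f$ only through the set of times at which the pairing with $f$ becomes infinite. For part (1), the key point is that for a finite measure $X_t$ and a nonnegative $f$ and constant $a>0$ we have $\langle X_t,af\rangle=a\langle X_t,f\rangle$, so $\langle X_t,f\rangle=\infty$ if and only if $\langle X_t,af\rangle=\infty$; hence the two infima defining $T(X_0,f)$ and $T(X_0,af)$ are over the same set of times, giving equality (in fact pointwise in $\omega$, so certainly $P$-a.s.). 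This step uses no probability, only the positive homogeneity of the integral.

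For part (2), I would use the domination $f\le \|f\|_\infty\cdot 1$, valid a.e., for any $f\in\overset{\circ}{L}^\infty_+(\Rd)$. One must be slightly careful that $X_t$ is a measure and $f\le\|f\|_\infty$ holds only Lebesgue-a.e.; however, for the $\gamma$-super-Brownian motion one knows (and this is exactly what the main theorem will upgrade) that at a fixed time $X_t$ charges no Lebesgue-null set, or more simply one may note that the explosion event is really about $\langle X_t,f\rangle$ and use that $f\le\|f\|_\infty$ everywhere after modifying $f$ on a null set without changing $T(X_0,f)$. Taking this for granted, monotonicity of the integral gives $\langle X_t,f\rangle\le\|f\|_\infty\langle X_t,1\rangle$, so if $\langle X_t,1\rangle<\infty$ then $\langle X_t,f\rangle<\infty$. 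Contrapositively, $\langle X_t,f\rangle=\infty$ forces $\langle X_t,1\rangle=\infty$, whence the set of explosion times for $f$ is contained in the set of explosion times for $1$, and therefore $T(X_0,1)\le T(X_0,f)$ pointwise in $\omega$, a fortiori $P$-a.s.

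The only genuine subtlety, and the step I expect to need the most care, is the a.e.\ versus everywhere issue in part (2): strictly speaking the bound $\langle X_t,f\rangle\le\|f\|_\infty\langle X_t,1\rangle$ requires $f\le\|f\|_\infty$ pointwise relative to the measure $X_t$, not just Lebesgue-a.e. In the intended reading of the lemma this is harmless because $T(X_0,f)$ is insensitive to modifying $f$ on a Lebesgue-null set (the statement of Theorem \ref{thm:abs_cont} itself treats such $f$ up to Lebesgue equivalence), so one chooses the everywhere-bounded representative and proceeds. Everything else is a one-line manipulation of infima, which is why the authors omit the details.
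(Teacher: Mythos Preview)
Your proposal is correct and matches the paper's intended approach: the authors explicitly say the proofs ``are simple and easily follow from the definition, so they are omitted,'' and what you wrote is exactly that unwinding of Definition~\ref{def:exp_time}.

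One clarification on the subtlety you flagged in part~(2). Your two suggested resolutions are both circular: invoking absolute continuity of $X_t$ is precisely Theorem~\ref{thm:abs_cont}, and the claim that $T(X_0,f)$ is unchanged under Lebesgue-a.e.\ modification of $f$ \emph{also} presupposes that $X_t$ does not charge Lebesgue-null sets. The clean, non-circular fix is simpler than either: in this paper $\langle X_t,f\rangle$ is always evaluated on a concrete nonnegative Borel function (note Definition~\ref{def:exp_time} is even stated for continuous $f$, and the later applications are to explicit functions like $\delta^n(z-\cdot)$), so one simply takes a representative with $f(x)\le \|f\|_\infty$ for \emph{every} $x$, e.g.\ $f\wedge\|f\|_\infty$. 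Then $\langle X_t,f\rangle\le\|f\|_\infty\,\langle X_t,1\rangle$ holds for every measure $X_t$ and every $\omega$, with no appeal to absolute continuity. With that reading, both parts are one-line pointwise statements, as you said.
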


In the next lemma we will show that for any $t\geq T(X_0,f)$, one has \mbox{$X_t(f)=\infty$}.
Before we proceed, let us recall 
 that the Laplace transform of  $\Xt$ is given by 
\begin{equation}
\label{eq:expl_lapl1}
E\left( e^{-\langle X_t,f \rangle} \right)= e^{- \langle X_0,V_t(f) \rangle},\: f \in  \overset{\circ} {L}\mbox{}^{\infty}_+(\Rd),
\end{equation}
where $\{V_t(f)\}_{t\geq 0}$ solves  log-Laplace equation~\eqref{eq:ll_eq_int}. 
\begin{lemma}
For any $t>0$, $f\in \ \overset{\circ} {L}\mbox{}^{\infty}_+(\Rd)$,
\begin{equation*}
\left\{ T(X_0,f) \leq t\right\} = \left\{X_t(f)=\infty\right\}, \: P-\textnormal{a.s.}. 
\end{equation*}
\end{lemma}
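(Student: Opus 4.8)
The plan is to prove the two inclusions between the events separately; only one of them requires real work. Note first that the set over which the infimum defining $T(X_0,f)$ is taken is precisely $\{s\ge 0:\langle X_s,f\rangle=\infty\}$. Hence, \emph{surely}, if $\langle X_t,f\rangle=\infty$ then $t$ lies in that set and $T(X_0,f)\le t$, which gives the inclusion $\{X_t(f)=\infty\}\subseteq\{T(X_0,f)\le t\}$. It remains to prove the reverse inclusion up to a $P$-null set, and for this it suffices to establish the \emph{absorption property}: $P$-a.s., on $\{T(X_0,f)<\infty\}$ one has $\langle X_s,f\rangle=\infty$ for every $s\ge T(X_0,f)$. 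Indeed, on $\{T(X_0,f)\le t\}$ we have $t\ge T(X_0,f)<\infty$, so the absorption property then forces $\langle X_t,f\rangle=\infty$.

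To obtain the absorption property I would combine three ingredients. First, the Laplace transform~\eqref{eq:expl_lapl1}, the lower bound $V_u(g)>((1-\gamma)u)^{\gamma'}$ valid for every not identically zero $g\in L^{\infty,w}_+(\Rd)$ (Theorem~\ref{thm:ag_es_res}(2)) and the monotonicity of $g\mapsto V_u(g)$ (Theorem~\ref{thm:ag_es_res}(3)): letting $\lambda\downarrow 0$ in $E_\nu\big[e^{-\lambda\langle X_u,f\rangle}\big]=e^{-\langle\nu,V_u(\lambda f)\rangle}$ and using monotone convergence on both sides yields, for every finite measure $\nu$ and every $u>0$,
\begin{equation*}
P\big(\langle X_u,f\rangle<\infty\mid X_0=\nu\big)=e^{-\langle\nu,\bar V_u\rangle}\le e^{-((1-\gamma)u)^{\gamma'}\nu(\Rd)},\qquad \bar V_u:=\lim_{\lambda\downarrow 0}V_u(\lambda f),
\end{equation*}
so this probability tends to $0$ as $\nu(\Rd)\to\infty$. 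Second, the level-crossing stopping times $R_n:=\inf\{s\ge 0:\langle X_s,f\rangle\ge n\}$, which increase to a limit $R_\infty$ that one checks equals $T(X_0,f)$. Third, the strong Markov property of $X$ (Remark~\ref{rm:strong_markov}). Since $f$ is bounded, $\langle X_s,f\rangle\le c\,\langle X_s,1\rangle$ with $c<\infty$ an essential upper bound of $f$, so at a crossing time $R_n<\infty$ at which $X_{R_n}$ is a finite measure its total mass is at least $n/c$. Applying the strong Markov property at $R_n$ together with the displayed estimate (with $u>0$ fixed) gives $P(\langle X_{R_n+u},f\rangle<\infty)\le e^{-((1-\gamma)u)^{\gamma'}n/c}$; summing over $n$ and invoking Borel--Cantelli, one gets that on $\{R_\infty<\infty\}$, $P$-a.s., $\langle X_s,f\rangle=\infty$ for every $s>R_\infty$, and then, using right-continuity of the paths of $X$ (recall that the total-mass process, a continuous-state branching process, explodes by a single jump to $+\infty$), also at $s=R_\infty$. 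Since $R_\infty=T(X_0,f)$, this is the absorption property, and the lemma follows.

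I expect the main obstacle to lie in making the strong Markov step rigorous: one needs to know that at the crossing times $R_n$ the state $X_{R_n}$ is genuinely a finite measure (not a measure of infinite total mass on which $\langle\cdot,f\rangle$ merely happens to be finite), and one needs to control $\langle X_\cdot,f\rangle$ exactly at $s=R_\infty=T(X_0,f)$ --- i.e. show the infimum is attained --- rather than only for $s>R_\infty$. Both points rest on the explosion structure of the superprocess and of its total-mass continuous-state branching process, and this is the technical heart of the argument; everything else (the trivial inclusion, the Laplace-transform estimate, the passage to probabilities and Borel--Cantelli) is routine.
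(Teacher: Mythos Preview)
Your proposal and the paper share the same core ingredients --- the trivial inclusion, the crossing times $R_n$ (the paper's $T_n$), the strong Markov property, and the lower bound $V_u(\cdot)>((1-\gamma)u)^{\gamma'}$ --- but you route the argument through a pathwise \emph{absorption property} (``$\langle X_s,f\rangle=\infty$ for all $s\ge T(X_0,f)$''), whereas the paper works directly at the fixed time $t$.

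That detour creates a real gap in your Borel--Cantelli step. From $\sum_n P(\langle X_{R_n+u},f\rangle<\infty,\,R_n<\infty)<\infty$ you obtain, for each fixed $u>0$, that a.s.\ $\langle X_{R_n+u},f\rangle=\infty$ for all large $n$. But this is a statement about the \emph{random} times $R_n+u$; it does not yield ``$\langle X_s,f\rangle=\infty$ for every $s>R_\infty$'', nor $\langle X_t,f\rangle=\infty$ at the fixed deterministic $t$. Indeed, knowing $\langle X_{R_n+u},f\rangle=\infty$ for some $R_n+u<t$ only gives $T(X_0,f)\le R_n+u<t$; to conclude $\langle X_t,f\rangle=\infty$ from that you would need precisely the absorption you are trying to prove. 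Varying $u$ over rationals does not help, for the same reason, and your appeal to ``right-continuity'' and ``explosion by a single jump'' to cover $s=R_\infty$ is an unjustified structural claim about the superprocess.

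The paper avoids this circularity by never aiming for absorption. It fixes $t$ and $\delta\in(0,t)$, restricts to $\{T_n\le t-\delta\}$, and applies the strong Markov property at $T_n$ \emph{directly to the functional} $e^{-\langle X_t,f\rangle}$ (not to $e^{-\langle X_{T_n+u},f\rangle}$ for a separate $u$). On that event $t-T_n\ge\delta$, so $V_{t-T_n}(f)\ge c(\delta)f$, and
\[
E\bigl[e^{-\langle X_t,f\rangle}\,1_{\{T(X_0,f)\le t-\delta\}}\bigr]
\;\le\; E\bigl[e^{-c(\delta)\langle X_{T_n},f\rangle}\,1_{\{T_n\le t-\delta\}}\bigr]
\;\le\; e^{-c(\delta)n}\xrightarrow[n\to\infty]{}0,
\]
after which $\delta\downarrow 0$ and monotone convergence finish the job. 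This $\delta$-shift is exactly the device that replaces your Borel--Cantelli and simultaneously sidesteps both obstacles you flag: only $\langle X_{T_n},f\rangle\ge n$ is used (no need to know $X_{T_n}$ is a finite measure beyond that), and the boundary case $t=T(X_0,f)$ is absorbed by the limit in $\delta$. Your argument is easily repaired by adopting this fixed-$t$ viewpoint; the absorption property, while plausible, is both harder and unnecessary here.
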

\begin{proof}
The $P$-a.s. inclusion $\left\{X_t(f)=\infty\right\} \subset \{ T(X_0,f) \leq t \}$ is trivial. Now let us show $\{ T(X_0,f) \leq t\} \subset \{X_t(f)=\infty \}$, $P$-a.s.
It is enough to verify  that
\begin{equation}\label{eq:exp_time_4}
E\left( e^{- \langle X_t,f \rangle} 1_{\left\{ T(X_0,f) \leq t\right\}} \right)=0.
\end{equation}
Define the stopping  time
\begin{align*}
T_n(X_0,f) \equiv \inf\left\{ t \geq 0, X_t(f)=n\right\}.
\end{align*}
Clearly  $ T_n(X_0,f) \to  T(X_0,f),\: P$-a.s.,  as $n\to \infty$.
Then, for any  $\delta \in (0,t)$ arbitrarily small,
\begin{align}
E& \left( e^{ - \langle X_t,f \rangle}  1_{\left\{ T(X_0,f)\leq t-\delta\right\} } \right)=\notag \\
 &= E \left(   e^{-\langle X_t,f \rangle} 1_{\left\{ T(X_0,f)\leq t-\delta\right\} } 1_{\left\{ T_n(X_0,f)\leq t-\delta\right\} }   \right)\notag \\
&\leq E \left( e^{ - \langle X_t,f \rangle}  1_{\left\{ T_n(X_0,f)\leq t-\delta\right\} } \right)\notag\\
&=E\left( E \left(e^{-\langle X_t,f \rangle}  | \mathcal{F}_{T_n(X_0,f)} \right) 1_{\left\{ T_n(X_0,f)\leq t-\delta\right\}} \right)  \label{eq:lm_stop_time_1}\\
&= E\left(  e^{ -\langle X_{T_n(X_0,f)},V_{t-T_n(X_0,f)}(f) \rangle }1_{\left\{ T_n(X_0,f)\leq t-\delta \right\}}\right).\label{eq:lm_stop_time_2}
\end{align}
Here, in \eqref{eq:lm_stop_time_1},  we used the strong  Markov Property (see Remark \ref{rm:strong_markov}).
Fix $c(\delta)>0$ sufficiently small such that $c(\delta) f(x)\leq  ((1-\gamma)t)^{\gamma'}$ for all \mbox{$t\geq \delta$}, \mbox{$ x\in \Rd$}. Then by Theorem~\ref{thm:ag_es_res} (see also  
  Lemma $2.2$ in~\cite{AgEs}) we have 
\begin{align*}
c(\delta )f(x) \leq ((1-\gamma)t)^{\gamma'} \leq V_t(f)(x),\: \forall t\geq \delta, x\in \Rd. 
\end{align*}
Therefore the expression \eqref{eq:lm_stop_time_2} can be bounded from the above  by
\begin{align*}
E\left(e^{-c(\delta)n} 1_{\left\{ T_n(X_0,f)\leq t-\delta \right\}}\right).
\end{align*}
By the dominated convergence theorem this expression tends to zero, as $n\to \infty$  and we get
\begin{align*}
E\left( e^{-\langle X_t,f \rangle}1_{\left\{ T(X_0,f)\leq t-\delta \right\}}\right) =0, \forall t>0.
\end{align*}
Now take $\delta \searrow 0$ and by the monotone convergence theorem we get \eqref{eq:exp_time_4} and this completes  the proof.
\end{proof}
The following corollary is immediate. 
\begin{corollary}\label{cor:exp_times_1}
Let $f \in   \overset{\circ} {L}\mbox{}^{\infty}_+(\Rd)$.  Then
\begin{align}
 E \left(e^{-\langle X_t,f \rangle}\right) = E\left(e^{-\langle X_t,f \rangle}1_{\left\{ T(X_0,f)>t\right\} }\right),\: \forall t>0.
\end{align}
\end{corollary}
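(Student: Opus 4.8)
The plan is to split the expectation $E\!\left(e^{-\langle X_t,f\rangle}\right)$ over the two complementary events $\{T(X_0,f)\le t\}$ and $\{T(X_0,f)>t\}$ and to show that the contribution of the first event is zero. First I would write
\[
E\left(e^{-\langle X_t,f\rangle}\right)=E\left(e^{-\langle X_t,f\rangle}1_{\left\{T(X_0,f)\le t\right\}}\right)+E\left(e^{-\langle X_t,f\rangle}1_{\left\{T(X_0,f)>t\right\}}\right),
\]
which is legitimate since $e^{-\langle X_t,f\rangle}$ is nonnegative and bounded by $1$, so all three expectations are finite.

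Next I would invoke the preceding lemma, which gives the $P$-a.s. identity of events $\left\{T(X_0,f)\le t\right\}=\left\{X_t(f)=\infty\right\}$. On this event $\langle X_t,f\rangle=+\infty$, hence $e^{-\langle X_t,f\rangle}=0$ there (with the usual convention $e^{-\infty}=0$, which is already implicit in~\eqref{eq:expl_lapl1}). Therefore $e^{-\langle X_t,f\rangle}1_{\left\{T(X_0,f)\le t\right\}}=0$, $P$-a.s., so its expectation vanishes, and substituting back into the display above yields the asserted identity for every $t>0$.

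There is essentially no obstacle: the only point requiring a little care is that the lemma provides equality of events only up to a $P$-null set, so the argument is carried out at the level of the indicator random variables rather than as a set-theoretic equality. If one wished to avoid the convention $e^{-\infty}=0$ altogether, one could instead observe that $e^{-\langle X_t,f\rangle}1_{\left\{X_t(f)=\infty\right\}}=\lim_{n\to\infty}e^{-n}1_{\left\{X_t(f)\ge n\right\}}=0$ pointwise and pass to expectations by dominated convergence, but this refinement is not needed. Hence the corollary follows immediately from the lemma.
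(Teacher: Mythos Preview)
Your proof is correct and is exactly the argument the paper intends: the corollary is stated as immediate from the preceding lemma, and your decomposition together with the identity $\{T(X_0,f)\le t\}=\{X_t(f)=\infty\}$ $P$-a.s.\ is precisely that immediate deduction.
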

\medskip
Now, let us calculate the distribution of $T(X_0,1)$ --- the distribution of the explosion time of the total mass of the super-Brownian motion $X$.
By Corollary \ref{cor:exp_times_1} and \eqref{eq:expl_lapl1}, we get

\begin{equation}\label{eq:calc_tot_exp}
\begin{split}
P( t<T(1,X_0))& =  \lim_{a\searrow 0}E \left(1_{\{t<T(1,X_0)\}}\left( e^{-a\langle X_t,1 \rangle}\right)\right)\\
&=  \lim_{a\searrow 0} e^{-\langle V_t(a),X_0 \rangle}\\
& =  \lim_{a\searrow 0}\exp\left(-\langle X_0,1 \rangle\big(a^{1-\gamma}+t(1-\gamma)\big)^{\gamma'}\right)\\
&=  \exp\left( - \langle X_0,1 \rangle  \big( t(1-\gamma)\big)^{\gamma'}\right),
\end{split}
\end{equation}
where the third equality follows from the fact that $V_t(a)$ is a solution of the  ordinary differential  equation

\begin{equation*}
\left\{
\begin{array}{ll}
\displaystyle{\frac{\ud v(t)}{\ud t}} &= v^\gamma(t), \quad t\geq0, \\[ 10pt]
v(0) &= a,
\end{array} \right.
\end{equation*}
and hence
\begin{align}\label{eq:sol_ode}
V_t(a)(x) = ( a ^{1-\gamma} + t(1-\gamma))^{\gamma'},\quad \forall x\in\Rd, t\geq 0.  
\end{align}
Then
\begin{equation}\label{eq:expl_dist}
F_{T(1,X_0)}(t)= P(t\geq T)= 1 -\exp\left( - \langle X_0,1 \rangle \big( t(1-\gamma)\big)^{\gamma'}\right).
\end{equation}
But what about other test functions $f$? What is the law of  $\langle X_t,f \rangle$ for a general  $f\in   \overset{\circ} {L}\mbox{}^{\infty}_+(\Rd)$? The answer is given in the following lemma.
In what follows, in order to simplify  notation, we often write $T(f)$ instead of $T(f,X_0)$.
\begin{lemma}\label{lm:expl_times_2}
For any $f\in   \overset{\circ} {L}\mbox{}^{\infty}_+(\Rd)$ random variable $T(f)$ has the same distribution as $T(1)$:
\begin{equation*}
 F_{T(f)}(t)= P(t\geq T(f))= 1 -\exp\left( -  \langle X_0,1 \rangle \big( t(1-\gamma)\big)^{\gamma'}    \right), \quad t\geq 0. 
\end{equation*}
\end{lemma}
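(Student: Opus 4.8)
The plan is to reduce the distribution of $T(f)$ to the already-computed distribution of $T(1)$ by sandwiching $f$ between multiples of the constant function $1$, exploiting the monotonicity of explosion times (Lemma~\ref{lm:expl_times_1}) together with the scaling relation $T(X_0,af)=T(X_0,f)$. Since $f\in\overset{\circ}{L}\mbox{}^{\infty}_+(\Rd)$ is bounded and not identically zero, there is a constant $M>0$ with $f(x)\le M$ for a.e.\ $x$; hence $f\le M\cdot 1$ pointwise a.e. By the monotonicity part of Lemma~\ref{lm:expl_times_1} (inclusions of the events $\{\langle X_t,\cdot\rangle\ge n\}$ are reversed under domination of test functions), together with part~(1) of that lemma, we get $T(X_0,1)=T(X_0,M\cdot 1)\le T(X_0,f)$, $P$-a.s. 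This already gives one inequality for the distribution functions, $F_{T(f)}(t)\le F_{T(1)}(t)$, i.e.\ $P(T(f)\le t)\le P(T(1)\le t)$ for all $t\ge 0$.

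For the reverse inequality I would mimic the computation in~\eqref{eq:calc_tot_exp}, but now taking the limit $a\searrow 0$ in $E\big(1_{\{t<T(f)\}}e^{-a\langle X_t,f\rangle}\big)$, using Corollary~\ref{cor:exp_times_1} applied to the test function $af$ and the Laplace-transform identity~\eqref{eq:expl_lapl1}. This yields
\begin{equation*}
P(t<T(f)) = \lim_{a\searrow 0} e^{-\langle X_0,V_t(af)\rangle}.
\end{equation*}
The key point is to control $V_t(af)$ as $a\searrow 0$. On the one hand, Theorem~\ref{thm:semi} gives the lower bound $V_t(af)(x)\ge ((1-\gamma)t)^{\gamma'}$, uniformly in $x$ and independent of $a$; on the other hand, since $af\le aM\cdot 1$ a.e., monotonicity of solutions in the initial data (Theorem~\ref{thm:ag_es_res}(3)) gives $V_t(af)(x)\le V_t(aM\cdot 1)(x) = ((aM)^{1-\gamma}+t(1-\gamma))^{\gamma'}$ by the explicit ODE solution~\eqref{eq:sol_ode}. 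Letting $a\searrow 0$, the upper bound converges to $(t(1-\gamma))^{\gamma'}$, which coincides with the lower bound; hence $\lim_{a\searrow 0}V_t(af)(x)=(t(1-\gamma))^{\gamma'}$ uniformly in $x$. Substituting into the displayed limit and using that $X_0$ is a finite measure gives $P(t<T(f)) = \exp(-\langle X_0,1\rangle\, (t(1-\gamma))^{\gamma'})$, which is exactly the complement of the claimed distribution function.

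The only mild subtlety — and the step I would be most careful about — is the passage to the limit $\lim_{a\searrow 0} e^{-\langle X_0,V_t(af)\rangle} = e^{-\langle X_0,\lim_a V_t(af)\rangle}$, which requires justifying the exchange of limit and integration against $X_0$; this is immediate here because the convergence $V_t(af)(x)\to (t(1-\gamma))^{\gamma'}$ is dominated (by $V_t(M\cdot 1)$, a finite constant, for $a\le 1$) and $X_0$ is finite, so bounded convergence applies. One should also double-check that Corollary~\ref{cor:exp_times_1} together with $T(af)=T(f)$ correctly produces $P(t<T(f)) = \lim_{a\searrow 0}E(1_{\{t<T(f)\}}e^{-a\langle X_t,f\rangle})$ — this uses monotone convergence in $a$ on the event $\{t<T(f)\}=\{X_t(f)<\infty\}$ where $e^{-a\langle X_t,f\rangle}\uparrow 1$, together with Corollary~\ref{cor:exp_times_1} to drop the indicator inside the expectation. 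Combining the two inequalities $F_{T(f)}\le F_{T(1)}$ and $F_{T(f)}\ge F_{T(1)}$ finishes the proof; in fact the argument shows $F_{T(f)}=F_{T(1)}$ directly from the limit computation, so the preliminary sandwiching inequality is not strictly needed, but it gives a useful sanity check.
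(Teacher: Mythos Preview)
Your proposal is correct and follows essentially the same approach as the paper: both compute $P(t<T(f))=\lim_{a\searrow 0}e^{-\langle X_0,V_t(af)\rangle}$ via Corollary~\ref{cor:exp_times_1} and the Laplace identity, and both invoke the lower bound $V_t(af)\ge((1-\gamma)t)^{\gamma'}$. The only cosmetic difference is that the paper uses just this lower bound to get $P(t<T(f))\le P(t<T(1))$ and then closes the sandwich with the a.s.\ inequality $T(1)\le T(f)$ from Lemma~\ref{lm:expl_times_1}(2), whereas you additionally squeeze $V_t(af)$ from above by $V_t(aM)=((aM)^{1-\gamma}+t(1-\gamma))^{\gamma'}$ to compute the limit directly; as you yourself note, this makes the preliminary ordering argument redundant.
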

\begin{proof}
Fix an arbitrary  $f\in   \overset{\circ} {L}\mbox{}^{\infty}_+(\Rd)$, and $t>0$. Then  we have

\begin{equation}\label{eq:exp_time_calc}
\begin{split}
P( t<T(f))& =  \lim_{a\searrow 0}E \left(1_{\{t<T(f)\}}e^{-\langle X_t,af \rangle}\right)\\
&= \lim_{a\searrow 0}E \left(e^{-\langle X_t,af \rangle}\right)\\
&=  \lim_{a\searrow 0} e^{-\langle V_t(af),X_0 \rangle},
\end{split}
\end{equation}
where the second equality follows by Corollary \ref{cor:exp_times_1}.
By Theorem \ref{thm:ag_es_res} (see also  Lemma $2.2$ in~\cite{AgEs}), we get
\begin{equation}\label{eq:lm_eq_1}
\left((1-\gamma)t\right)^{\gamma'} < 
V_t(af)(x),
\: \forall  a >0, t>0,  x\in \Rd.
\end{equation}
Using \eqref{eq:lm_eq_1}, 
we get
\begin{align}
\label{eq:15_12_1}
 \exp\left(-\left<X_0, V_t(af)\right>\right) 
&\leq \exp\left(-\left<X_0,1  \right> \left( t(1-\gamma)\right)^{\gamma'}  \right).
\end{align}
By Lemma~\ref{lm:expl_times_1}(2) we have $T(1)\leq T(f)$, $P$-a.s. 
By 
this,  \eqref{eq:expl_dist}, \eqref{eq:exp_time_calc}, and \eqref{eq:15_12_1} we  obtain
\begin{align*}
P( t<T(1))
&\leq  P( t<T(f))\\
&\leq \exp\left(-\left<X_0,1 \right> \left( t(1-\gamma)\right)^{\gamma'}   \right)\\
&=P( t<T(1)).
\end{align*}
Thus, we get $P( t<T(f))= P( t<T(1))$. Since $t>0$ was arbitrary, we are done. 
\end{proof}

The next lemma is a consequence of the first two lemmas in this section.
\begin{lemma}\label{lm:expl_times_3}
For any  $f\in   \overset{\circ} {L}\mbox{}^{\infty}_+(\Rd)$,
\begin{equation*}
P\left(T(1))\neq T(f)\right) = 0.
\end{equation*}
\end{lemma}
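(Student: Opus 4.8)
The plan is to deduce the statement from the two preceding lemmas together with an elementary fact about ordered random variables with a common law. By Lemma~\ref{lm:expl_times_1}(2) we already know $T(1)\le T(f)$, $P$-a.s., and by Lemma~\ref{lm:expl_times_2} the laws of $T(1)$ and $T(f)$ coincide. Hence it suffices to establish the following general fact: whenever $U,W$ are real-valued random variables on $(\Omega,\mathcal F,P)$ with $U\le W$ $P$-a.s.\ and $U\eqdist W$, then $U=W$ $P$-a.s. Applying it with $U=T(1)$ and $W=T(f)$ gives $P(T(1)\neq T(f))=0$, which is the assertion.

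To carry out the elementary fact, first I would observe that $U\le W$ implies the inclusion $\{W\le q\}\subseteq\{U\le q\}$ for every $q$, so that $P(U\le q)\ge P(W\le q)$; since the one-dimensional distribution functions of $U$ and $W$ agree, this forces $P\big(\{U\le q\}\setminus\{W\le q\}\big)=0$ for every $q$. Next I would write
\[
\{U<W\}=\bigcup_{q\in\mathbb{Q}}\big(\{U\le q\}\setminus\{W\le q\}\big),
\]
a countable union of $P$-null sets, whence $P(U<W)=0$; combined with $U\le W$ $P$-a.s.\ this yields $U=W$ $P$-a.s.

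I do not expect a genuine obstacle here: the substantive content is already contained in Lemmas~\ref{lm:expl_times_1} and~\ref{lm:expl_times_2}, and what remains is a routine measure-theoretic step. The only points deserving a little care are that $T(1)$ and $T(f)$ are a priori $[0,\infty]$-valued (although~\eqref{eq:expl_dist} shows both are in fact $P$-a.s.\ finite, and the rational-threshold argument is insensitive to this anyway), and that one must pass from equality of the one-dimensional distribution functions to the almost-sure comparison through a countable family of thresholds rather than a single one.
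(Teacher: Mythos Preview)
Your proposal is correct and follows essentially the same approach as the paper: both invoke Lemma~\ref{lm:expl_times_1}(2) for the a.s.\ ordering and Lemma~\ref{lm:expl_times_2} for equality of laws, and conclude from the elementary fact that ordered, identically distributed random variables must coincide a.s. The paper simply states ``hence the result follows'' without spelling out this last step, whereas you supply the standard rational-threshold argument explicitly.
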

\begin{proof}
By Lemma \ref{lm:expl_times_1}(2), $T(1)\leq T(f)$,  $P$-a.s. and, by Lemma \ref{lm:expl_times_2}, $T(1)$ and $T(f)$ have the same distribution, hence the result follows.
\end{proof}
\begin{corollary}\label{cor:expl_time}
For any $f\in   \overset{\circ} {L}\mbox{}^{\infty}_+(\Rd)$,
\begin{align*}
E \left(  e^{-\langle X_t,f \rangle}  \right) = E \left( e^{-\langle X_t,f \rangle} 1_{\left\{t<T(1)\right\}}\right) = e^{-\langle X_0,V_t(f) \rangle},\;\; t>0. 
\end{align*}
\end{corollary}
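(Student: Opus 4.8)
The plan is to assemble the statement from facts already established. The rightmost equality $E(e^{-\langle X_t,f\rangle}) = e^{-\langle X_0, V_t(f)\rangle}$ is nothing but the defining Laplace-transform identity \eqref{eq:expl_lapl1} for the $\gamma$-super-Brownian motion, where $\{V_t(f)\}_{t\ge 0}$ is the solution of the log-Laplace equation \eqref{eq:ll_eq_int} with bounded, nonnegative, not identically zero function initial datum $f$; its existence and the basic lower bound $V_t(f)(x) \ge ((1-\gamma)t)^{\gamma'}$ used implicitly elsewhere are guaranteed by Theorem \ref{thm:ag_es_res}. So no work is needed there beyond invoking \eqref{eq:expl_lapl1}.

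For the leftmost equality I would argue as follows. First apply Corollary \ref{cor:exp_times_1}, which gives $E(e^{-\langle X_t,f\rangle}) = E(e^{-\langle X_t,f\rangle} 1_{\{T(f) > t\}})$ for every $t>0$. Then invoke Lemma \ref{lm:expl_times_3}, according to which $P(T(1) \ne T(f)) = 0$; hence the events $\{T(f) > t\}$ and $\{t < T(1)\}$ coincide up to a $P$-null set, and the indicators may be interchanged inside the expectation. This yields $E(e^{-\langle X_t,f\rangle}) = E(e^{-\langle X_t,f\rangle} 1_{\{t < T(1)\}})$, which is the first equality, and chaining with \eqref{eq:expl_lapl1} completes the proof.

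There is essentially no obstacle here: the corollary is a bookkeeping consequence of the preceding lemmas, and the only point worth a word of care is that all three quantities refer to the \emph{same} object $V_t(f)$ (the one from Theorem \ref{thm:ag_es_res}) and the same version of the process $X$, so that the chain of equalities is internally consistent. One could alternatively give a self-contained argument for the left equality by noting that on $\{t \ge T(f)\}$ one has $\langle X_t, f\rangle = \infty$, by the lemma identifying $\{T(X_0,f) \le t\}$ with $\{X_t(f) = \infty\}$, so that $e^{-\langle X_t,f\rangle} = 0$ there, and then using $\{t \ge T(f)\} = \{t \ge T(1)\}$ $P$-a.s.\ from Lemma \ref{lm:expl_times_3}; but routing through Corollary \ref{cor:exp_times_1} together with Lemma \ref{lm:expl_times_3} is the shortest path and is the one I would write up.
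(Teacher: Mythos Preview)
Your proposal is correct and is exactly the argument the paper has in mind: the corollary is stated without proof in the paper, as an immediate consequence of Corollary~\ref{cor:exp_times_1}, Lemma~\ref{lm:expl_times_3}, and the Laplace identity~\eqref{eq:expl_lapl1}, which is precisely the chain you wrote out.
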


\subsection{Proof of Theorem~\ref{thm:abs_cont}}\label{sec:abs_cont}
We begin this subsection with the following  remark.
\begin{remark}\label{rem:radon_nikodym}
By Lemma $3.4.2.1$ in~\cite{Daw2}, any random measure $Y \in \MFRd$ can be decomposed  into its absolutely continuous $Y^{ac}$ and singular $Y^{s}$ parts  with respect to the Lebesgue measure:
$Y(\omega,\ud x) = Y^{ac}(\omega,\ud x)+ Y^{s}(\omega,\ud x)$.
By the  definition of $T(1)$,  $X_t$ is  a finite measure on  $\Omega\cap \{t<T(1)\}$.
Hence on the set $\Omega\cap \{t<T\}$, $X_t$ can be decomposed into absolutely continuous and singular parts
\begin{align*}
X_t(\omega,\ud x) = X_t^{ac}(\omega,\ud x)+ X_t^{s}(\omega,\ud x).
\end{align*}
\end{remark}

\par
The next lemma is used in the proof of  measurability of density. Its proof is a simple  adaptation of the proof of Theorem $1.8$ from~\cite{Li} and therefore it is omitted.
\begin{lemma}\label{lm:meas}
For any $f\in B^+_{bc}(\Rd)$ and any fixed $t\in (0,\infty)$, the map
$(\omega,z) \mapsto \langle  X_t(\omega),f(z-\cdot)\rangle$ is a measurable map from \par ${\OTG}$   to $\mathbb{R}_+$.
\end{lemma}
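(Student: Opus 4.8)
The plan is to deduce Lemma~\ref{lm:meas} from the corresponding one-particle measurability statement for the super-Brownian motion, which is essentially the content of Theorem~1.8 in~\cite{Li}. Concretely, one first establishes that the map $(\omega, y) \mapsto \langle X_t(\omega), g(\cdot - y)\rangle$ — equivalently $(\omega,y)\mapsto \int_\Rd g(x-y)\, X_t(\omega,\ud x)$ — is jointly measurable from $\left(\Omega\cap\{t<T(1)\}, \mathcal{F}\cap\{t<T(1)\}\right)\times\RdB$ to $\R_+$. The restriction to the event $\{t<T(1)\}$ is what makes $X_t$ an honest finite random measure there (see Remark~\ref{rem:radon_nikodym}), so the argument of~\cite{Li} applies verbatim on this event. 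The statement of our lemma has $f(z-\cdot)$ rather than $f(\cdot - z)$, but this is a harmless reparametrization: with $\tilde f(x) \defeq f(-x)$ one has $f(z - x) = \tilde f(x - z)$, and $\tilde f \in B^+_{bc}(\Rd)$ whenever $f \in B^+_{bc}(\Rd)$, so it suffices to prove the claim in the form $\langle X_t(\omega), \tilde f(\cdot - z)\rangle$.

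The key steps, in order, are as follows. First I would fix $t\in(0,\infty)$ and work on the measurable space $\OTG$, recalling that on $\Omega\cap\{t<T(1)\}$ the object $X_t(\omega,\ud x)$ is a (random) finite measure by Remark~\ref{rem:radon_nikodym}. Second, I would reduce to indicator-type test functions: since $f \in B^+_{bc}(\Rd)$, one can approximate $f$ monotonically from below by simple functions $\sum_i c_i \mathbf{1}_{B_i}$ with $B_i$ bounded Borel sets (or, by a monotone-class / $\pi$-$\lambda$ argument, it suffices to check the assertion when $f = \mathbf{1}_B$ for $B$ a bounded rectangle), and joint measurability is preserved under pointwise monotone limits and finite positive linear combinations. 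Third, for $f = \mathbf{1}_B$ the quantity $\langle X_t(\omega), f(z-\cdot)\rangle = X_t(\omega, z - B)$, and joint measurability of $(\omega, z)\mapsto X_t(\omega, z-B)$ follows from the measurability of $\omega\mapsto X_t(\omega,\cdot)$ as a map into $\MFRd$ (with the weak topology, hence also Borel for the evaluation maps $\mu\mapsto\mu(A)$ on Borel $A$) together with the continuity/measurability of the translation action $z\mapsto z - B$ — this is exactly where the adaptation of~\cite{Li}, Theorem~1.8 enters. Finally, I would assemble these pieces via a standard functional monotone class theorem to conclude the lemma for all $f\in B^+_{bc}(\Rd)$.

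Since the paper explicitly states ``its proof is a simple adaptation of the proof of Theorem~1.8 from~\cite{Li} and therefore it is omitted,'' the honest thing is to present this as a citation-driven sketch rather than a full proof; the substantive content is already in~\cite{Li}, and the only genuine check is that the restriction to $\{t<T(1)\}$ legitimizes treating $X_t$ as a finite random measure. The main (mild) obstacle is handling the potential non-finiteness of $X_t$ off the event $\{t<T(1)\}$: one must be careful to carry the event $\{t<T(1)\}$ inside the $\sigma$-algebra throughout, so that all measurability statements are made relative to $\mathcal{F}\cap\{t<T(1)\}$ and one never needs to assign a meaning to $\langle X_t,f(z-\cdot)\rangle$ when the total mass has already exploded. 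Beyond that bookkeeping, the argument is routine and the details can be left to the reader as in the source.
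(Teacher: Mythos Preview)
Your proposal is correct and aligns with the paper's own treatment: the paper omits the proof entirely, stating only that it is a simple adaptation of Theorem~1.8 in~\cite{Li}, which is precisely the citation-driven monotone-class argument you outline. The only additional ingredient you identify---restricting to $\{t<T(1)\}$ so that $X_t$ is an honest finite random measure---is exactly the bookkeeping needed to make that adaptation go through.
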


For studying differentiability properties of  $X_t$, let us introduce a sequence of functions $\{\delta^n(\cdot)\}_{n=1}^\infty$ defined
 as
\begin{equation*}
\delta^n(x) = \left\{
\begin{array}{ll}
 1/Leb(B_{1/n}(0)),& \text{if } \left|x\right|\leq \frac{1}{n};\\ 
 0,& \text{otherwise.}
\end{array} \right.
\end{equation*}
Here $B_{1/n}(0)$ is a closed ball of radius $1/n$, centered at the origin.
Notice that  the sequence $\{\delta^n(z-\cdot)\}_{n=1}^\infty$   converges to Dirac $\delta$-function with support at  point $z$.\par
\begin{lemma}\label{lm:radon_nikodym}
On  $\OTSRd$, $P(\ud \omega)\ud z$ - a.e. there exists
a limit
\begin{equation*}
\widetilde{\eta}^{ac}_t(\omega,z)= \lim_{n\to \infty} \langle X_t(\omega),\delta^n(z-\cdot)\rangle.
\end{equation*}
The random function $\widetilde{\eta}^{ac}_t$ is a version of the Radon-Nikodym derivative of
$X_t$ on $\OTS$.
 Moreover $\widetilde{\eta}^{ac}_t$ is a measurable map from \par
 $\OTG$ to $\mathbb{R}_+$.
\end{lemma}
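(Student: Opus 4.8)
The plan is to prove existence of the limit and its identification with the Radon-Nikodym density using the martingale/differentiation-of-measures technology, and then to obtain measurability by combining Lemma \ref{lm:meas} with a limiting argument. First I would fix $t\in(0,\infty)$ and work on the event $\{t<T(1)\}$, on which, by Remark \ref{rem:radon_nikodym}, $X_t(\omega,\cdot)$ is $P$-a.s. a genuine finite measure on $\Rd$ admitting the Lebesgue decomposition $X_t = X_t^{ac} + X_t^s$. For each such $\omega$, the quantity $\langle X_t(\omega),\delta^n(z-\cdot)\rangle = X_t(\omega, B_{1/n}(z))/\mathrm{Leb}(B_{1/n}(z))$ is exactly the average of the measure $X_t(\omega,\cdot)$ over the ball $B_{1/n}(z)$. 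By the Lebesgue differentiation theorem for measures (Besicovitch differentiation, valid on $\Rd$ with closed Euclidean balls), for $\mathrm{Leb}$-a.e. $z$ this average converges as $n\to\infty$ to the Radon-Nikodym derivative $\ud X_t^{ac}/\ud\mathrm{Leb}(z)$ of the absolutely continuous part; the singular part contributes nothing to the limit $\mathrm{Leb}$-a.e. Thus, for each fixed $\omega$ in the (full-probability) event, the limit $\widetilde\eta^{ac}_t(\omega,z) := \lim_n \langle X_t(\omega),\delta^n(z-\cdot)\rangle$ exists for $\mathrm{Leb}$-a.e. $z$ and equals a version of the density of $X_t^{ac}(\omega,\cdot)$. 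Integrating over $\Omega\cap\{t<T(1)\}$ with Tonelli's theorem then upgrades this to: $P(\ud\omega)\,\ud z$-a.e. on $\OTSRd$, the limit exists, and it serves as a version of the Radon-Nikodym derivative of $X_t$ on $\OTS$.

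For the measurability claim, I would proceed as follows. Each $\delta^n$ lies in $B^+_{bc}(\Rd)$, so by Lemma \ref{lm:meas} the map $(\omega,z)\mapsto \langle X_t(\omega),\delta^n(z-\cdot)\rangle$ is measurable from $\OTG$ to $\R_+$ for every $n$. Define $\widetilde\eta^{ac}_t(\omega,z) := \limsup_{n\to\infty}\langle X_t(\omega),\delta^n(z-\cdot)\rangle$ (say), which is a measurable function of $(\omega,z)$ as a countable $\limsup$ of measurable functions, taking values in $[0,\infty]$; on the full-measure set where the genuine limit exists this $\limsup$ coincides with that limit and is finite, so it is a legitimate choice of version. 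This gives a measurable map from $\OTG$ to $\R_+$ (after modifying it to be, say, $0$ on the null set where it is infinite or the limit fails to exist), completing the lemma.

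The main obstacle I anticipate is not a deep one but a bookkeeping one: one must be careful about the distinction between "for each fixed $\omega$, $\mathrm{Leb}$-a.e. $z$" (which is what the classical differentiation theorem gives) and the joint "$P(\ud\omega)\ud z$-a.e." statement and the joint measurability; this requires invoking the joint measurability of $(\omega,z)\mapsto\langle X_t(\omega),\delta^n(z-\cdot)\rangle$ from Lemma \ref{lm:meas} to apply Tonelli and to realize the pointwise-in-$\omega$ differentiation statement as a genuine almost-everywhere statement on the product space. A secondary technical point is ensuring that the Besicovitch/Lebesgue differentiation theorem is applied in the correct form — for a finite Borel measure on $\Rd$ differentiated against Lebesgue measure using closed balls, which is exactly the regime covered by the standard theorem — and that the set of "good" $z$ is measurable in $z$ for each $\omega$, which again follows from the measurability of the finite-$n$ averages. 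None of these steps is hard, which is presumably why, as the paper notes for the companion Lemma \ref{lm:meas}, the argument is a routine adaptation of standard references (e.g. the treatment in~\cite{Li}).
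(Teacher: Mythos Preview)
Your proposal is correct and follows essentially the same route as the paper: apply the Lebesgue differentiation theorem $\omega$-by-$\omega$ on $\{t<T(1)\}$ to identify the limit as the density of $X_t^{ac}$, use Fubini/Tonelli to pass to the $P(\ud\omega)\ud z$-a.e.\ statement, and deduce joint measurability from Lemma~\ref{lm:meas} together with the fact that a (countable) limit of measurable functions is measurable. Your use of $\limsup$ to pin down a globally defined measurable version is a minor cosmetic variant of the paper's convention of setting the limit to zero on the exceptional set.
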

\begin{proof}
By the Lebesgue density theorem (see~\cite{Rudin},   Theorem $7.14$), for $P$-a.s. $\omega \in \OTS$, there exists a limit
\begin{equation}\label{eq:radon_nikodym}
\widetilde{\eta}^{ac}_t(\omega,z)= \lim_{n\to \infty} \langle X_t(\omega),\delta^n(z-\cdot)\rangle
\end{equation}
for all $z\in R^d\setminus N(\omega)$ where $N(\omega)$ is a Borel subset of the Lebesgue measure zero and $\widetilde{\eta}^{ac}_t$ is a Radon-Nikodym derivative with respect to the Lebesgue measure.
It is easy to see that convergence in \eqref{eq:radon_nikodym} takes place \mbox{$P(\ud \omega)\ud z$-a.e.}
We set $\widetilde{\eta}^{ac}_t(\omega,z)$ to be zero  at points $(\omega,z)$ where the limit does not exist.
\par
By Lemma~\ref{lm:meas},  for each $n=1,2,\ldots$, $\langle X_t(\omega),\delta^n(z-\cdot)\rangle$ is measurable and the
measurability of $\widetilde{\eta}^{ac}_t(\omega,z)$ follows from $P(\ud \omega)\ud z$-a.e. convergence.
\end{proof}
Function $\widetilde{\eta}^{ac}_t(\omega,z)$ is defined on $\OTS$. Function $\eta^{ac}_t(\omega,z)$ is an extension  of function $\widetilde{\eta}^{ac}_t(\omega,z)$ to entire $\Omega$:
\begin{equation}\label{eq:def_eta}
\eta^{ac}_t(\omega,z) = \left\{
\begin{array}{ll}
\widetilde{\eta}^{ac}_t(\omega,z) & \text{if } \omega\in  \OTS,\\
 \infty   & \text{otherwise. }
\end{array} \right.
\end{equation}

Recall that for any $\mu\in \MFORd$, $\{V_t(\mu)\}_{t>0}$ denotes the solution to~\eqref{eq:ll_eq}.  
\begin{lemma}\label{lm:ll_conv_as}
For every $t\in (0,\infty)$
the equality
\begin{equation*}
E \left(1_{\{t<T(1)\}} \exp\left(-\sum_{i=1}^N a_i\eta^{ac}(z_i)\right)\right) = \exp\left(- \left(\left\langle X_0, V_t\bigg(\sum_{i=0}^N a_i \delta(z_i- \cdot)\right)\right\rangle \right)
\end{equation*}
holds  for almost every $\{z_i\}_{i=1}^N \subset \mathbb{R}^d$ and any 
$\{a_i\}_{i=1}^N\subset \R_{++}\,$.
\end{lemma}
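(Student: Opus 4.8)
The plan is to approximate the $\delta$-functions $\delta(z_i - \cdot)$ by the mollifiers $\delta^n(z_i - \cdot)$, apply the known Laplace-transform formula for finitely many bounded test functions, and pass to the limit on both sides. First I would write, for fixed $n$ and fixed $\{z_i\}$, the test function $g_n \equiv \sum_{i=1}^N a_i \delta^n(z_i - \cdot) \in \overset{\circ}{L}\mbox{}^\infty_+(\Rd)$; it is nonnegative, bounded and compactly supported, hence Corollary~\ref{cor:expl_time} gives
\begin{equation*}
E\left(e^{-\langle X_t, g_n\rangle} 1_{\{t<T(1)\}}\right) = \exp\left(-\langle X_0, V_t(g_n)\rangle\right), \quad t>0.
\end{equation*}
Note $\langle X_t, g_n\rangle = \sum_{i=1}^N a_i \langle X_t, \delta^n(z_i-\cdot)\rangle$, so the left side already has the shape we want once we know the mollified integrals converge.

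Next I would handle the left-hand side. By Lemma~\ref{lm:radon_nikodym}, for $P(\ud\omega)\ud z$-a.e.\ $(\omega,z)$ on $\OTSRd$ we have $\langle X_t(\omega),\delta^n(z-\cdot)\rangle \to \widetilde\eta^{ac}_t(\omega,z)$; by Fubini this means that for a.e.\ fixed $z$, the convergence holds $P$-a.s.\ on $\Omega\cap\{t<T(1)\}$, and iterating over the finite collection $z_1,\dots,z_N$ (intersecting finitely many full-measure sets) we get, for a.e.\ $\{z_i\}_{i=1}^N$,
\begin{equation*}
\sum_{i=1}^N a_i\langle X_t(\omega),\delta^n(z_i-\cdot)\rangle \longrightarrow \sum_{i=1}^N a_i\eta^{ac}_t(\omega,z_i), \quad P\text{-a.s. on }\{t<T(1)\}.
\end{equation*}
Since $1_{\{t<T(1)\}}e^{-\langle X_t,g_n\rangle}$ is bounded by $1$, the bounded convergence theorem gives that the left side of the displayed identity converges to $E(1_{\{t<T(1)\}}\exp(-\sum_i a_i\eta^{ac}_t(z_i)))$, which is exactly the target left-hand side.

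For the right-hand side I would invoke the continuity of the log-Laplace solutions in the initial data. The measures $\mu_n \equiv \sum_{i=1}^N a_i\delta^n(z_i-\cdot)\,\ud x$ are finite, nonzero, and converge weakly to $\mu \equiv \sum_{i=1}^N a_i\delta(z_i-\cdot) \in \MFORd$ (this is the standard approximate-identity convergence, and since $\mu$ has finite total mass it lies in $\MFORd$). Corollary~\ref{cor:conv_bounded_mesure} applied with $\chi = X_0$ then yields $\langle X_0, V_t(\mu_n)\rangle \to \langle X_0, V_t(\mu)\rangle$, hence $\exp(-\langle X_0,V_t(\mu_n)\rangle) \to \exp(-\langle X_0, V_t(\mu)\rangle)$. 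Equating the two limits gives the claimed equality.

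The main obstacle I anticipate is bookkeeping around the null sets rather than anything deep: one must be careful that the $P(\ud\omega)\ud z$-a.e.\ convergence from Lemma~\ref{lm:radon_nikodym} is correctly transferred, via Fubini's theorem, to a statement that for a.e.\ $\{z_i\}$ (now in $(\Rd)^N$) the convergence holds $P$-a.s., and that on the complement of $\{t<T(1)\}$ the indicator kills any issue with $\eta^{ac}_t$ being set to $\infty$ there. A second small point is checking that $V_t(g_n) = V_t(\mu_n)$, i.e.\ that the solution of~\eqref{eq:ll_eq_int} with the bounded function $g_n$ agrees with the solution of~\eqref{eq:ll_eq} with the associated measure $\mu_n$ — this follows since $\mu_n(\ud x) = g_n(x)\,\ud x$ and both equations have the same mild formulation with $S_t\mu_n = S_t g_n$, so uniqueness in Theorem~\ref{thm:semi} identifies them. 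Everything else is routine application of bounded/dominated convergence.
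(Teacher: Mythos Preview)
Your proposal is correct and follows essentially the same strategy as the paper: approximate the Dirac masses by the mollifiers $\delta^n$, apply Corollary~\ref{cor:expl_time} with the bounded test function $g_n$, pass to the limit on the left via Lemma~\ref{lm:radon_nikodym} and bounded convergence, and on the right via Corollary~\ref{cor:conv_bounded_mesure}. The only cosmetic difference is that the paper integrates both sides against an arbitrary $\phi\in C_b^+(\mathbb{R}^{dN})\cap L^1(\mathbb{R}^{dN})$ before taking $n\to\infty$ (and then removes $\phi$ at the end), whereas you invoke Fubini directly to fix a.e.\ $(z_1,\dots,z_N)$ and run bounded convergence inside the expectation; these are equivalent bookkeeping devices for the same null-set issue.
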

\begin{proof}

Let $\FUNC$ be any function in  $ C^+_b(\mathbb{R}^{d\times N})\cap L^1(\mathbb{R}^{d\times N})$.
By Corollary~\ref{cor:expl_time} we have
\begin{equation*}
E \left(1_{\{t<T(1)\}}e^{- \left< X_t,\sum_{i=1}^N a_i \delta^n(z_i-\cdot) \right> }\right)= e^{-\left<X_0,V_t\left(\sum_{i=1}^N a_i\delta^n(z_i-\cdot)\right) \right>}.
\end{equation*}
Let us multiply both parts of  this equation by the function $\FUNC$,
 integrate over  $\mathbb{R}^{d\times N}$ and take the limit
 \begin{equation}\label{eq:as_conv1}
 \begin{split}
 \lim_{n\to \infty}& \int_{\mathbb{R}^{d\times N}} E\left(1_{\{t<T(1)\}} e^{-X_t(\sum_{i=1}^N a_i \delta^n (z_i-\cdot))}\right) \FUNC \DZ  \\
 &= \lim_{n\to \infty} \int_{\mathbb{R}^{d\times N}}e^{- \langle V_t(\sum_{i=1}^N a_i \delta^n(z_i-\cdot)),X_0 \rangle}\FUNC \DZ.
 \end{split}
 \end{equation}

 By Lemma~\ref{lm:radon_nikodym}, the limit

 \begin{equation*}
 \lim_{n\to \infty} \left<X_t(\omega),\sum_{i=1}^N a_i \delta^n(z_i-\cdot) \right> =
 \sum_{i=1}^N a_i\eta^{ac}_t(\omega,z_i)
 \end{equation*}
 exists almost everywhere  on $\OTSN$ with respect to the  measure $P(\ud \omega)\FUNC\DZ$.
 Therefore, by the bounded convergence theorem,  we get following limit on the left-hand side of
\eqref{eq:as_conv1}:

\begin{equation}\label{eq:as_conv2}
 \begin{split}
& \lim_{n\to \infty} \int_{\mathbb{R}^{d\times N}} E\left(1_{\{t<T(1)\}} e^{- \langle X_t,\sum_{i=1}^N a_i \delta^n(z_i-\cdot) \rangle}\right) \FUNC \DZ  \\
 &=\int_{\mathbb{R}^{d\times N}}  E\left(1_{\{t<T(1)\}} e^{-\sum_{i=1}^N a_i \eta^{ac}_t(z_i)}\right) \FUNC\DZ.
 \end{split}
 \end{equation}
Now let us take care of the right-hand side of \eqref{eq:as_conv1}.
 Since $X_0$ is a finite, non-random measure,  then  by Corollary  \ref{cor:conv_bounded_mesure},  the right hand side of  equation \eqref{eq:as_conv2} also converges:
 \begin{equation}\label{eq:as_conv3}
 \begin{split}
 \lim_{n\to \infty}& \int_{\mathbb{R}^{d\times N}}e^{-\langle V_t(\sum_{i=1}^N a_i \delta^n(z_i-\cdot)),X_0 \rangle} \FUNC\DZ \\
 &= \int_{\mathbb{R}^{d\times N}}e^{-\langle V_t(\sum_{i=1}^N a_i \eta^{ac}(z_i-\cdot)),X_0 \rangle}\FUNC\DZ .
 \end{split}
 \end{equation}
Now, since $\phi$ was chosen  arbitrarily,  we can  combine  \eqref{eq:as_conv1}, \eqref{eq:as_conv2}
and  \eqref{eq:as_conv3}  and get
\begin{equation*}
 \begin{split}
 E\left(1_{\{t<T(1)\}} e^{-\sum_{i=1}^N a_i\eta^{ac}_t(z_i)}\right) &=  e^{-\langle V_t(\sum_{i=1}^N a_i \delta(z_i-\cdot)),X_0 \rangle} \\
 & \text{for Lebesgue almost every} \{z_i\}_{i=1}^N \: \text{in}\:  \mathbb{R}^d.
 \end{split}
 \end{equation*}
 \end{proof}
\begin{lemma}\label{lm:as_conv}
Let $\phi\in C^+_b(\Rd)\cap L^1(\Rd)$
and let $\{\xi_n\}_{n=1}^\infty$ be a sequence of i.i.d. random variables defined on some probability space $(\Omega',\mathcal{F}',P')$ with  the probability density function
\begin{equation*}
g^r_{\xi}(x) = \left\{
\begin{array}{ll}
 1/Leb(B_r(0)) ,& \textnormal{if } |x| \leq r\\
 0,  & \textnormal{elsewhere. }
\end{array} \right.
\end{equation*}

Then,  for any $f \in L^1(\Rd)$,
\begin{equation*}
\begin{split}
\lim_{N\to \infty} \frac{Leb(B_r(0))}{N}\sum_{i=1}^N \phi(\xi_i)f(\xi_i) &= Leb(B_r(0)) \int_{\Omega'} \phi(\omega')f(\xi_1(\omega))P'(\ud \omega') \\
&= \int_{\Rd} \phi(x)f(x)1_{B_r(0)}(x)\ud x,\: P'-\textnormal{a.s.}
\end{split}
\end{equation*}
This also implies  that
\begin{equation*}
\lim_{N\to \infty} \frac{Leb(B_r(0))}{N}\sum_{i=1}^N \phi(\xi_i)\delta(\xi_i-\cdot)\overset{w}{\Longrightarrow} \phi(x)1_{B_r(0)}(x)\ud x,\: P'-\textnormal{ a.s.}
\end{equation*}
\end{lemma}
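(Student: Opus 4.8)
The plan is to obtain both assertions from Kolmogorov's strong law of large numbers, the weak-convergence part additionally needing a separability argument so that one almost-sure event serves all test functions at once. For the first display, fix any measurable $f$ with $\phi f\in L^1(B_r(0))$ --- in particular any $f\in L^1(\Rd)$, since $\phi\in C^+_b(\Rd)$ gives $\int_{B_r(0)}|\phi f|\,\ud x\le\|\phi\|_\infty\|f\|_{L^1(\Rd)}<\infty$. Then $\{\phi(\xi_i)f(\xi_i)\}_{i\ge1}$ are i.i.d.\ and $P'$-integrable with common mean
\[
\int_{\Omega'}\phi(\xi_1)f(\xi_1)\,P'(\ud\omega')=\frac{1}{Leb(B_r(0))}\int_{B_r(0)}\phi(x)f(x)\,\ud x ,
\]
and the strong law gives $\tfrac1N\sum_{i=1}^N\phi(\xi_i)f(\xi_i)\to$ this mean, $P'$-a.s.; multiplying by $Leb(B_r(0))$ is exactly the claimed identity (the middle expression in the statement being $Leb(B_r(0))$ times the above mean).

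For the weak-convergence statement I would set $\mu_N\defeq\frac{Leb(B_r(0))}{N}\sum_{i=1}^N\phi(\xi_i)\,\delta(\xi_i-\cdot)$ and $\mu(\ud x)\defeq\phi(x)1_{B_r(0)}(x)\,\ud x$. Every $\mu_N$ and $\mu$ is a finite measure supported in the compact ball $\overline{B_r(0)}$, and $C(\overline{B_r(0)})$ is separable, so fix a countable family $\{g_k\}_{k\ge1}\subset C(\overline{B_r(0)})$ that is dense in the uniform norm and contains the constant function $1$. Extending each $g_k$ to a bounded function on $\Rd$ we have $\phi g_k\in L^1(B_r(0))$, so the first part applies with $f=g_k$ and gives a $P'$-null set off which $\langle\mu_N,g_k\rangle\to\langle\mu,g_k\rangle$. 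Intersecting these countably many null sets produces one $P'$-a.s.\ event on which $\langle\mu_N,g_k\rangle\to\langle\mu,g_k\rangle$ for every $k$; taking $g_k\equiv1$ shows that on this event $\sup_N\mu_N(\Rd)<\infty$.

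On that event I would finish with the usual approximation argument: for arbitrary $g\in C(\overline{B_r(0)})$ and $\varepsilon>0$ choose $g_k$ with $\|g-g_k\|_\infty<\varepsilon$, so that $|\langle\mu_N,g\rangle-\langle\mu,g\rangle|\le\varepsilon\bigl(\sup_N\mu_N(\Rd)+\mu(\Rd)\bigr)+|\langle\mu_N,g_k\rangle-\langle\mu,g_k\rangle|$; letting $N\to\infty$ and then $\varepsilon\to0$ yields $\langle\mu_N,g\rangle\to\langle\mu,g\rangle$ for all $g\in C(\overline{B_r(0)})$, i.e.\ $\mu_N\weakconv\mu$, $P'$-a.s. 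The only genuinely non-mechanical point is this upgrade from the $f$-dependent almost-sure convergence of the first part to a single almost-sure event carrying weak convergence of the measures, which is exactly what the separability of $C(\overline{B_r(0)})$ and the countable dense family buy us; everything else is bookkeeping.
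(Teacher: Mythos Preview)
Your proposal is correct and follows the same approach as the paper, which simply observes that $\phi f\in L^1(\Rd)$ and invokes the law of large numbers. Your treatment is in fact more careful: the paper's two-sentence proof does not spell out the separability argument needed to pass from the $f$-by-$f$ almost-sure convergence to a single almost-sure event on which $\mu_N\weakconv\mu$, whereas you do.
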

\begin{proof}
It is obvious that $\phi f \in L^1(\Rd)$ and the rest follows from the  law of large numbers.
\end{proof}

\begin{lemma}\label{lm:cont_dist}
For any $f\in \overset{\circ}{C} \mbox{}^{+}_b(\Rd), t>0$,
\begin{equation*}
E \left(1_{\{t<T(1)\}} \exp\left(-\int_{\Rd}\eta^{ac}_t(z)f(x)\ud x \right) \right)= E\left(1_{\{t<T(1)\}}\exp\left(-\int_{\Rd} X_t(\ud x) f(x)\right)\right).
\end{equation*}

\end{lemma}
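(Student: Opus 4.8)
The plan is to feed the pointwise relation of Lemma~\ref{lm:ll_conv_as} into a strong law of large numbers of the type recorded in Lemma~\ref{lm:as_conv}: if one chooses the points $z_i$ at random and the weights $a_i$ proportional to $f$, then the finite sum $\sum_i a_i\eta^{ac}_t(z_i)$ converges, as the number of points $N\to\infty$, to $\int_{B_r(0)}f\,\eta^{ac}_t\,\ud z$, while the measure $\sum_i a_i\,\delta(z_i-\cdot)$ converges weakly to the finite measure $f\,1_{B_r(0)}\,\ud x$; one then identifies the two sides of the resulting identity and lets $r\to\infty$. So, fix $f\in\overset{\circ}{C}\mbox{}^{+}_b(\Rd)$ and $t>0$, choose $r>0$ large enough that $\int_{B_r(0)}f(x)\,\ud x>0$, and let $\{\xi_i\}_{i\geq1}$ be i.i.d.\ uniform on $B_r(0)$ as in Lemma~\ref{lm:as_conv}, defined on an auxiliary space $(\Omega',\mathcal F',P')$; set $a_i^N=\frac{Leb(B_r(0))}{N}f(\xi_i)$.

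First I would pass to the limit $N\to\infty$ in this identity. Since the exceptional Lebesgue-null set in Lemma~\ref{lm:ll_conv_as} does not depend on the weights and the random vector $(\xi_1,\dots,\xi_N)$ has a density, that lemma may be applied with $z_i=\xi_i$ and weights $a_i^N$ (the at most finitely many indices with $f(\xi_i)=0$ doing no harm), giving, $P'$-a.s.\ and for all $N$,
\begin{equation*}
E\Big(1_{\{t<T(1)\}}e^{-\sum_i a_i^N\eta^{ac}_t(\xi_i)}\Big)=\exp\Big(-\big\langle X_0,\,V_t\big({\textstyle\sum_i}a_i^N\,\delta(\xi_i-\cdot)\big)\big\rangle\Big).
\end{equation*}
Now fix a $P'$-generic $\omega'$ and let $N\to\infty$. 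For $P$-a.e.\ $\omega\in\{t<T(1)\}$ the function $z\mapsto\eta^{ac}_t(\omega,z)$ belongs to $L^1(\Rd)$, because there $X_t(\omega,\cdot)$ is a finite measure dominating $\eta^{ac}_t(\omega,z)\,\ud z$; hence, by Lemma~\ref{lm:as_conv} and Fubini, $\sum_i a_i^N\eta^{ac}_t(\omega,\xi_i)\to\int_{B_r(0)}f(z)\eta^{ac}_t(\omega,z)\,\ud z$ for $P(\ud\omega)P'(\ud\omega')$-a.e.\ $(\omega,\omega')$, and since the integrand is $\leq1$ the bounded convergence theorem turns the left-hand side into $E\big(1_{\{t<T(1)\}}\exp(-\int_{B_r(0)}f\,\eta^{ac}_t\,\ud z)\big)$. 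On the right-hand side, Lemma~\ref{lm:as_conv} gives $\sum_i a_i^N\delta(\xi_i-\cdot)\weakconv f\,1_{B_r(0)}\,\ud x$ in $\MFORd$, so Corollary~\ref{cor:conv_bounded_mesure} with $\chi=X_0$ and the continuity of $\exp$ turn it into $\exp(-\langle X_0,V_t(f\,1_{B_r(0)})\rangle)$. Combining these, and then applying Corollary~\ref{cor:expl_time} to $f\,1_{B_r(0)}\in\overset{\circ}{L}\mbox{}^{\infty}_+(\Rd)$, I obtain
\begin{align*}
E\Big(1_{\{t<T(1)\}}\exp\big(-{\textstyle\int_{B_r(0)}}f(z)\eta^{ac}_t(z)\,\ud z\big)\Big)&=e^{-\langle X_0,\,V_t(f\,1_{B_r(0)})\rangle}\\
&=E\Big(1_{\{t<T(1)\}}e^{-\langle X_t,\,f\,1_{B_r(0)}\rangle}\Big).
\end{align*}

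Finally I would let $r\to\infty$. Since $f\geq0$ and $1_{B_r(0)}\uparrow1$, both $\int_{B_r(0)}f\,\eta^{ac}_t\,\ud z\uparrow\int_{\Rd}f\,\eta^{ac}_t\,\ud z$ and $\langle X_t,f\,1_{B_r(0)}\rangle\uparrow\langle X_t,f\rangle$, and on $\{t<T(1)\}$ both limits are finite ($X_t$ being a finite measure and $f$ bounded). As the two exponentials are $\leq1$, dominated convergence applied to both sides of the last display yields
\begin{equation*}
E\Big(1_{\{t<T(1)\}}\exp\big(-{\textstyle\int_{\Rd}}\eta^{ac}_t(z)f(z)\,\ud z\big)\Big)=E\Big(1_{\{t<T(1)\}}e^{-\langle X_t,f\rangle}\Big),
\end{equation*}
which is the assertion.

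\textbf{Main difficulty.} The real content is in two measurability/Fubini points: that the ``almost every $\{z_i\}$'' in Lemma~\ref{lm:ll_conv_as} may be replaced by the random points $\xi_i$ (legitimate because the $\xi_i$ have densities and the exceptional null set there is weight-independent), and that the law of large numbers for $z\mapsto\eta^{ac}_t(\omega,z)$ holds $P(\ud\omega)P'(\ud\omega')$-a.e.\ on $\{t<T(1)\}$, so that the limit $N\to\infty$ can be brought inside $E(\cdot)$ via bounded convergence. The remaining ingredients --- Corollary~\ref{cor:conv_bounded_mesure}, Corollary~\ref{cor:expl_time}, and the monotone/dominated convergence steps --- are routine.
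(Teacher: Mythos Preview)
Your proof is correct and follows the paper's strategy: randomize the evaluation points, pass to the limit $N\to\infty$ via Lemma~\ref{lm:as_conv} on the left and Corollary~\ref{cor:conv_bounded_mesure} on the right, then let the ball radius go to infinity. The paper differs only in bookkeeping: it first restricts to strictly positive $f\in C^{++}_b(\Rd)$ (so all weights $f(\xi_i)$ are automatically in $\R_{++}$, avoiding the zero-weight caveat you handle in passing) and it takes $r\to\infty$ on the right-hand side using the convergence $V_t(f1_{B_n(0)})\to V_t(f)$ from~\cite{AgEs} before identifying the limit, whereas you invoke Corollary~\ref{cor:expl_time} already at finite $r$ and then pass to the limit by dominated convergence on both sides, which is slightly more economical.
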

\begin{proof} 
 We augment our probability space $(\Omega,\mathcal{F},P(\ud \omega))$ by taking the  Cartesian product with another probability space
 $(\Omega',\mathcal{F}',P'(\ud \omega'))$:
 \begin{equation*}
 (\tilde{\Omega},\tilde{\mathcal{F}},\tilde{P}) \equiv (\Omega\times\Omega',\mathcal{F}\times\mathcal{F}',P(\ud \omega)P'(\omega)).
 \end{equation*}
 We also denote expectations on these spaces by $E$,  $E'$ and $\tilde{E}$ respectively.
 Let $C^{++}_b(\Rd)$ denote  the space of bounded continuous functions on $\Rd$ such that for any $f\in  C^{++}_b(\Rd)$, we have $\inf_{x\in \Rd} f(x)>0$. 
  Let us fix  an arbitrary $f \in C^{++}_b(\Rd)$  and a  positive integer $n$.
 \par
 By the Borel theorem (see~\cite{Kal2}, Thm 3.19, p. 55) , for each $n\geq 1$ we can build on the probability space
 $(\Omega',\mathcal{F}',P'(\ud \omega'))$ a sequence $\{\xi_i^n(\omega)\}_{i=1}^\infty$ of i.i.d.
 random variables with the  density function
 \begin{equation*}
 g_{\xi}^n(x) = \left\{
 \begin{array}{ll}
 1/Leb(B_n(0)) & \text{if } |x| \leq n\\
 0  &\text{elsewhere. }
 \end{array} \right.
 \end{equation*}
 By  Lemma~\ref{lm:ll_conv_as} we get, that the equality
  \begin{align*}
& E\left(1_{\{t<T(1)\}} \exp\bigg(-\frac{Leb(B_n(0))}{N}\sum_{i=1}^N f(z_i)\eta^{ac}(z_i)\bigg)\right) \\
&\;\;\;= \exp\bigg(- \bigg<X_0, V_t\bigg(\frac{Leb(B_n(0))}{N}\sum_{i=1}^N f(z_i) \delta(z_i- \cdot)\bigg)\bigg>\bigg)
\end{align*}
holds for  Lebesgue   almost every  $\{z_i\}_{i=1}^N$ in $\mathbb{R}^d$.
By changing  $\{z_i\}_{i=1}^N$ to $\{\xi_i^n\}_{i=1}^N$, we obtain
 \begin{equation}\label{eq:lm_cont_prob_1}
 \begin{split}
& E\left(1_{\{t<T(1)\}} \exp\bigg(-\frac{Leb(B_n(0))}{N}\sum_{i=1}^N f(\xi_i)1_{B_n(0)}(\xi_i^n)\eta^{ac}(\xi_i^n)\bigg)\right)\\
&\;\;\; = \exp\bigg(- \bigg<X_0, V_t\bigg(\frac{Leb(B_n(0))}{N}\sum_{i=1}^N f(\xi_i)1_{B_n(0)}(\xi_i^n) \delta(\xi_i^n- \cdot)\bigg)\bigg>\bigg),\: P'- \mathrm{a. s.}
\end{split}
\end{equation}
By taking limits $N\to \infty$ on both sides of \eqref{eq:lm_cont_prob_1}, as well as  using Corollary  \ref{cor:conv_bounded_mesure}
and Lemma~\ref{lm:as_conv} we get the  equality
\begin{equation*}
\begin{split}
&E\left(1_{\{t<T(1)\}} \exp\bigg(-\int_{\Rd}\eta_t^{ac}(x)f(x)1_{B_n(0)}(x)\ud x \bigg)\right) \\
&\;\;\;= \exp\bigg(- \bigg<X_0, V_t(f 1_{B_n(0)})\bigg>\bigg),\: P' -\text{a.s.}.
\end{split}
\end{equation*}
Since both sides of the above equation are constants, we can drop $P'$-a.s., and get 
\begin{equation}\label{eq:abs_cont_eq}
E\left(1_{\{t<T(1)\}} \exp\bigg(-\int_{\Rd}\eta^{ac}(x)f)1_{B_n(0)}(x)\ud x \bigg)\right) = \exp\bigg(- \bigg<X_0, V_t(f 1_{B_n(0)})\bigg>\bigg).
\end{equation}
By Theorem $2.8$ in~\cite{AgEs},
\begin{equation*}
V_t(f 1_{B_n(0)})\leq V_t(f 1_{B_{n+1}(0)})
\end{equation*}
and
\begin{equation}\label{eq:abs_cont_AgEs}
\lim_{n\to \infty} V_t(f 1_{B_n(0)} ) = V_t(f).
\end{equation}

Now we take limits, as  $n\to \infty$ on both sides of \eqref{eq:abs_cont_eq}, use the monotone convergence theorem and \eqref{eq:abs_cont_AgEs} to  get
\begin{equation}\label{eq:abs_cont_dist}
E \left( 1_{\{t<T(1)\}} \exp\bigg(-\int_{\Rd}\eta^{ac}_t(z)f(x)\ud x \bigg)\right)  = \exp\left(- \left \langle X_0, V_t(f)\right\rangle\right).
\end{equation}
Since any function in $\overset{\circ}{C} \mbox{}^{+}_b(\Rd)$  can be approximated boundedly pointwise by functions from $C^{++}_b(\Rd)$, we can again  apply the  dominated convergence theorem and obtain that the equality \eqref{eq:abs_cont_dist}
holds  for any $f\in \overset{\circ}{C} \mbox{}^{+}_b(\Rd)$.
Recall,  that  
\begin{equation*}
E\left(1_{\{t<T(1)\}} \exp\left( -\left\langle X_t,f\right\rangle\right)\right) = \exp\left( -\left\langle V_t(f),X_0\right\rangle\right),
\end{equation*}
and we are done.
\end{proof}
Now we are ready to conclude the  proof of the main result.
\begin{proof} [Proof of Theorem \ref{thm:abs_cont}]
Fix arbitrary $t>0$.
By Corollary \ref{cor:expl_time},  Lemma~\ref{lm:cont_dist}, for every $f\in \overset{\circ}{C} \mbox{}^{+}_b(\Rd)$,
\begin{equation}\label{eq:thm_abs_cont_dist}
\begin{split}
&E\left(1_{\{t<T(1)\}}\exp\left( - \left\langle X_t,f\right\rangle\right)\right)\\
&\quad\quad= E \left( 1_{\{t<T(1)\}}\exp\left( - \int_{\Rd} \eta^{ac}_t(x)f(x)\ud x \right)\right).
\end{split}
\end{equation}
This equation implies,  that, on the event $\left\{t<T(1)\right\}$,
\begin{equation*}\label{eq:proof_abs_cont_1}
\int_{\Rd} X_t(\ud x) f(x) \eqdist \int_{\Rd} \eta^{ac}_t(x) f(x)\ud x,
\end{equation*}
where $\eqdist$ means equality in distribution.
By Lemma~\ref{lm:radon_nikodym} and the definition of $\eta_t^{ac}$, $\eta_t^{ac}$ is a version of the Radon-Nikodym derivative of $X_t(\ud x)$ on $\left\{t<T(1)\right\}$. Therefore, on $\left\{t<T(1)\right\}$,
\begin{equation}\label{eq:main_thm_eq_dist}
\int_{\Rd} X_t(\ud x) f(x) \geq \int_{\Rd} \eta^{ac}_t(x) f(x)\ud x,\: P-\mathrm{a.s.}
\end{equation}
Equations \eqref{eq:main_thm_eq_dist} and \eqref{eq:proof_abs_cont_1}  imply that

\begin{equation*}
\int_{\Rd} X_t(\ud x) f(x) = \int_{\Rd} \eta^{ac}_t(x) f(x)\ud x,\: P-\mathrm{a.s.}\: \textnormal{on}\: \left\{t<T(1)\right\}.
\end{equation*}
Since  $f\in \overset{\circ}{C} \mbox{}^{+}_b(\Rd)$ was  arbitrary,  this  completes  the  proof of the theorem.
\end{proof}

\section{Proof of Theorem~\ref{thm:semi}}
\label{chap:semi}
Many steps in the proof follow the lines from~\cite{AgEs}. However since the initial conditions are measures, modifications are required.

\subsection{Existence of solutions}

We now  prove the  existence of a  solution to  equation
\eqref{eq:ll_eq}
by the  Picard iterations.
Let $\mu\in  \MFORd$, and
\begin{equation}\label{eq:gen_non_eq}
w(x,t,\mu) = S_t\mu +\int_0^t (S_{t-s}\Psi (w(s,\cdot,\mu)) )(x)\ud s, \:  x \in\Rd,\: t>0,
\end{equation}
be an integral evolution equation such that $\Psi$ is some non-negative function defined on $\mathbb{R}_+$.
Recall that the Picard iterations for this equation are  defined by induction as follows:
\begin{equation}\label{eq:def_picard_iters}
\begin{split}
w_1(x,t,\mu) &= (S_t\mu)(x),\\
w_{n+1}(x,t,\mu) &= (S_t\mu)(x) + \int_0^t (S_{t-s}\Psi(w_n(s,\cdot,\mu)))(x)\ud s,\:x \in \Rd,\: t>0\\
&\hspace{ 2.5 cm} n=1,2,\ldots
\end{split}
\end{equation}
Notice that  \eqref{eq:ll_eq} is a particular case of \eqref{eq:gen_non_eq} with $\Psi(\lambda) = \lambda^\gamma$.
It is obvious that for $0<t<\infty$, the Picard iterations \eqref{eq:def_picard_iters} form the non-decreasing sequence: $ w_n(x,t,\mu)
\leq w_{n+1}(x,t,\mu)$.
In the next lemma we derive some properties  of the  Picard iterations.
\begin{lemma}\label{lm:picard_iters}
Let $\{v_n(x,t,\mu)\}_{n=1}^\infty$ be a sequence of Piccard iterations corresponding to  \eqref{eq:ll_eq}.
Then for every $n = 1,2,\ldots$ and any $0<t<\infty, x\in \Rd$,  the  following inequalities hold:
\begin{itemize}
\item[$(1)$] $0\leq v_n(t,x,\mu)\leq e^t (S_t\mu)(x) + e^t$,
\item[$(2)$]  $v^\gamma_n(t,x,\mu) \leq e^t (S_t\mu)(x) + e^t$.
\end{itemize}
\end{lemma}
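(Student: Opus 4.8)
The plan is to prove $(1)$ and $(2)$ together by induction on $n$, using repeatedly the elementary inequality $a^\gamma\le\max(1,a)\le 1+a$ valid for all $a\ge 0$ (because $\gamma\in(0,1)$), the trivial bound $e^t\ge 1$, and the standard properties of the heat semigroup $\{S_t\}_{t\ge 0}$: it preserves non-negativity and order, $S_r 1 = 1$, and $S_{t-s}S_s = S_t$.

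For the base case $n=1$ one has $v_1(t,x,\mu) = (S_t\mu)(x)\ge 0$ since $p_t\ge 0$, and since $e^t\ge 1$ this gives $v_1(t,x,\mu)\le e^t(S_t\mu)(x) + e^t$, i.e. $(1)$; then $(2)$ follows from $v_1^\gamma(t,x,\mu) = ((S_t\mu)(x))^\gamma \le 1 + (S_t\mu)(x) \le e^t(S_t\mu)(x) + e^t$. (By Lemma~\ref{lm:simp_ineq}, $(S_t\mu)(x)<\infty$ for $t>0$, so all these quantities are finite.) For the inductive step, assume $(1)$ and $(2)$ hold for $v_n$. Non-negativity of $v_{n+1}$ is immediate from \eqref{eq:def_picard_iters} since $v_n^\gamma\ge 0$ and $S_r$ preserves non-negativity. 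For the upper bound in $(1)$, insert the bound $(2)$ for $v_n$ under the integral and use order preservation together with $S_{t-s}(e^s S_s\mu + e^s) = e^s S_t\mu + e^s$, so that $(S_{t-s}v_n^\gamma(s,\cdot,\mu))(x) \le e^s(S_t\mu)(x) + e^s$; integrating over $s\in(0,t)$ and using $\int_0^t e^s\,ds = e^t-1$ yields $v_{n+1}(t,x,\mu) \le e^t(S_t\mu)(x) + e^t - 1 \le e^t(S_t\mu)(x) + e^t$, which is $(1)$ for $v_{n+1}$. Finally, $(2)$ for $v_{n+1}$ follows from $(1)$ for $v_{n+1}$ by the elementary bound: writing $a = v_{n+1}(t,x,\mu)$ and $M = e^t(S_t\mu)(x) + e^t \ge e^t \ge 1$, we get $a^\gamma \le \max(1,a) \le \max(1,M) = M$.

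I do not expect a genuine obstacle. The two points that need a little care are the order of the two claims in the inductive step — $(1)$ for $v_{n+1}$ must be established before $(2)$ for $v_{n+1}$, and $(1)$ for $v_{n+1}$ relies on $(2)$ for $v_n$ — and the well-definedness of the time integral in \eqref{eq:def_picard_iters}: although $S_s\mu$ is unbounded as $s\downarrow 0$, the integrand $(S_{t-s}v_n^\gamma(s,\cdot,\mu))(x)$ is dominated by $e^s(S_t\mu)(x) + e^s$, which is bounded on $[0,t]$ for fixed $t>0$ and $x$, so the integral is finite and the iterates are well-defined and finite for $t>0$.
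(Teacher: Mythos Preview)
Your proof is correct. Both your argument and the paper's rest on the same elementary inequality $\lambda^\gamma\le \lambda+1$ and the same semigroup computation $\int_0^t S_{t-s}(e^s S_s\mu+e^s)\,ds=(e^t-1)(S_t\mu+1)$, but the packaging differs. The paper introduces the auxiliary linear equation $u(t)=S_t\mu+\int_0^t S_{t-s}(u(s)+1)\,ds$, observes that its Picard iterates $u_n$ dominate $v_n$ (via $\lambda^\gamma\le\lambda+1$), and then computes the explicit limit $u_n\nearrow e^t S_t\mu+e^t-1$; part $(2)$ is then deduced from $(1)$ by one more application of $\lambda^\gamma\le\lambda+1$. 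You instead run the induction directly on the nonlinear iterates, feeding $(2)$ for $v_n$ into the recursion to get $(1)$ for $v_{n+1}$, and then recovering $(2)$ for $v_{n+1}$ from $(1)$ via $a^\gamma\le\max(1,a)\le M$. Your route is slightly more self-contained (no auxiliary equation or limit computation), while the paper's comparison with a linear problem is a reusable template; the mathematical content is the same.
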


\begin{proof}
First note that $v_n, n\geq 1,$ are non-negative by construction.
Let $\mu \in \MFRd$ and let us consider  a linear integral equation
\begin{equation}\label{eq:linear_eq}
u(t,x,\mu) = (S_t\mu)(x) + \int_0^t \big(S_{t-s}( u(s,\cdot,\mu)+1) \big)(x)\ud s,\: x \in \Rd, t>0.
\end{equation}
Let $\{u_n(x,t,\mu)\}_{n=1}^\infty$ be  corresponding Picard iterations. 
Note  that  \eqref{eq:linear_eq}  is a particular case of   \eqref{eq:gen_non_eq} with $\Psi(\lambda)=\lambda+1$.
Since    $\lambda^\gamma \leq  \lambda + 1$ for $\gamma \in (0,1)$,  one can easily see that $v_n(t,x,\mu) \leq u_n(t,x,\mu)$,  for all
$n\geq 1$. \par
On the other hand  by direct calculations, one gets that  for any $t \in (0,\infty)$
\begin{equation*}
\lim_{n\to \infty} u_n(x,t,\mu) \nearrow e^t(S_t\mu)(x) + e^t -1,\quad \text{as $n\to\infty$,}
\end{equation*}
and the first inequality of the lemma  follows.
The second inequality is a consequence of the first one   and the inequality $\lambda^\gamma \leq  \lambda + 1$.
\end{proof}
\begin{proposition}[Existence]\label{prop:existence}
Let $\mu \in \MFORd$. Then the
integral equation \eqref{eq:ll_eq}
has a  solution $v(\cdot,\cdot)$ which is a limit of Picard iterations and, for any $\phi\in L^1(\Rd)\cap C_b(\Rd)$
\begin{equation}\label{eq:ll_eq_init_cond}
\lim_{t\to 0} \int_{\Rd} v(t,x)\phi(x)\ud x = \int_{\Rd} \phi(x)\mu(\ud x).
\end{equation}
\par
Moreover $v(\cdot,\cdot)$ satisfies the following inequalities for any $(t,x) \in (0,\infty)\times \Rd$:
\begin{align}
v(t,x)&\leq e^t (S_t\mu)(x) + e^t,\label{eq:sol_ineq11}\\
v^\gamma(t,x)&\leq e^t (S_t\mu)(x) + e^t.\label{eq:sol_ineq12}
\end{align}
\end{proposition}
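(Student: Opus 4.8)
The plan is to establish the three claims of Proposition~\ref{prop:existence} in turn, using the monotone structure of the Picard iterations already set up in Lemma~\ref{lm:picard_iters}. First I would observe that the sequence $\{v_n(t,x,\mu)\}_{n=1}^\infty$ is non-decreasing in $n$ (noted already in the text) and, by Lemma~\ref{lm:picard_iters}(1), bounded above by $e^t(S_t\mu)(x)+e^t$, which is finite for every $(t,x)\in(0,\infty)\times\Rd$ since $S_t\mu$ is a bona fide function for $t>0$. Hence the pointwise monotone limit
\begin{equation*}
v(t,x)\defeq\lim_{n\to\infty}v_n(t,x,\mu)
\end{equation*}
exists and inherits the bound \eqref{eq:sol_ineq11}; since $\lambda\mapsto\lambda^\gamma$ is continuous and non-decreasing, $v_n^\gamma\uparrow v^\gamma$ and \eqref{eq:sol_ineq12} follows from Lemma~\ref{lm:picard_iters}(2). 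To see that $v$ solves \eqref{eq:ll_eq}, I would pass to the limit in the recursion \eqref{eq:def_picard_iters}: on the right-hand side, $S_{t-s}v_n^\gamma(s,\cdot)(x)=\int p_{t-s}(x-y)v_n^\gamma(s,y)\,dy$ increases to $S_{t-s}v^\gamma(s,\cdot)(x)$ by monotone convergence in $y$ for each fixed $s$, and then a second application of monotone convergence in $s\in(0,t)$ gives $\int_0^t S_{t-s}v_n^\gamma(s,\cdot)(x)\,ds\uparrow\int_0^t S_{t-s}v^\gamma(s,\cdot)(x)\,ds$; the left side $v_{n+1}(t,x)$ converges to $v(t,x)$ by definition, and $S_t\mu(x)$ is constant in $n$, yielding \eqref{eq:ll_eq}.

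Next I would verify the initial condition \eqref{eq:ll_eq_init_cond}. Fix $\phi\in L^1(\Rd)\cap C_b(\Rd)$. Integrating \eqref{eq:ll_eq} against $\phi$ and using Fubini (justified by the bound \eqref{eq:sol_ineq11} together with $\phi\in L^1$ and $\mu$ finite) gives
\begin{equation*}
\int_{\Rd}v(t,x)\phi(x)\,dx=\int_{\Rd}(S_t\mu)(x)\phi(x)\,dx+\int_0^t\!\!\int_{\Rd}\big(S_{t-s}v^\gamma(s,\cdot)\big)(x)\phi(x)\,dx\,ds.
\end{equation*}
For the first term, $\int (S_t\mu)(x)\phi(x)\,dx=\int (S_t\phi)(y)\mu(dy)\to\int\phi(y)\mu(dy)$ as $t\to0$ by the Feller property of $\{S_t\}$ and dominated convergence (using $\|S_t\phi\|_\infty\le\|\phi\|_\infty$ and $\mu$ finite). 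For the second term, using self-adjointness of $S_{t-s}$ and then the bound \eqref{eq:sol_ineq12}, it is dominated in absolute value by $\int_0^t\int_{\Rd}v^\gamma(s,y)\,(S_{t-s}|\phi|)(y)\,dy\,ds\le\|\phi\|_\infty\int_0^t\int_{\Rd}\big(e^s(S_s\mu)(y)+e^s\big)(S_{t-s}\mathbf 1_{\mathrm{supp}}...)$; more cleanly, $\le\|\phi\|_1\,\|\phi\|_\infty^{0}$-type estimate — I would simply bound $S_{t-s}|\phi|$ in $L^\infty$ by $\|\phi\|_\infty$ is wrong since then $v^\gamma$ need not be integrable, so instead I bound $\int_{\Rd}v^\gamma(s,y)(S_{t-s}|\phi|)(y)\,dy\le\int(e^s S_s\mu(y)+e^s)(S_{t-s}|\phi|)(y)\,dy=e^s\langle\mu,S_t|\phi|\rangle+e^s\|\phi\|_1$, which is bounded uniformly for $s\in(0,t)$, $t\le1$; hence the double integral is $O(t)\to0$. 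This establishes \eqref{eq:ll_eq_init_cond}.

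The main obstacle is making the limiting arguments and the Fubini/integrability bookkeeping clean near $t=0$: the function $v(t,\cdot)$ is only known to be finite for $t>0$ and the a priori bound involves $S_t\mu$, which blows up in sup-norm as $t\to0$, so all manipulations must be done after integrating against $\phi$ (or against $\mu$) and must use the $L^{1,w}$/finite-measure structure rather than $L^\infty$ control. The regularity claims of Theorem~\ref{thm:semi} (continuity of $v$, the lower bound $((1-\gamma)t)^{\gamma'}<v$, membership in $L^\infty_{loc}((0,\infty),L^{1,w}_+)$, uniqueness, and continuous dependence on $\mu$) are deliberately \emph{not} part of this proposition and will be handled in subsequent subsections; here I only need existence as a monotone limit of Picard iterates together with the two upper bounds and the weak initial condition, all of which follow from monotone convergence once the finiteness of the iterates for $t>0$ is in hand.
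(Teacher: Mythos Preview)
Your proposal is correct and follows essentially the same route as the paper: monotone Picard iterates bounded by Lemma~\ref{lm:picard_iters}, a pointwise limit that solves \eqref{eq:ll_eq} via monotone convergence, and the initial condition obtained by integrating against $\phi$ and controlling the nonlinear term using the bound $v^\gamma(s,\cdot)\le e^s S_s\mu+e^s$. The only cosmetic difference is the order of operations in the last step: the paper first integrates in $s$ to get $\int_0^t S_{t-s}v^\gamma(s)\,ds\le (e^t-1)(S_t\mu+1)$ and then tests against $\phi$, whereas you first pass $\phi$ through $S_{t-s}$ by self-adjointness and then use the semigroup identity $S_sS_{t-s}=S_t$; both yield the same $O(t)$ bound $e^t\big(\langle\mu,S_t|\phi|\rangle+\|\phi\|_1\big)\cdot t$ and hence the same conclusion. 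Clean up the aborted aside in the middle of your initial-condition paragraph and the argument is complete.
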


\begin{proof}
Let  $\{v_n(t,x)\}_{n=1}^\infty$ be a sequence  of Picard iterations corresponding  to
equation \eqref{eq:ll_eq}. By the  previous discussion for any $t \in (0,\infty)$, $\{v_n(t,\cdot)\}_{n=1}^\infty$  form a non-decreasing sequence and by Lemma~\ref{lm:picard_iters} we have
\begin{equation}\label{eq:pic_conv}
v_n(t,x)\leq e^t (S_t\mu)(x) + e^t.
\end{equation}
Lemma ~\ref{lm:simp_ineq} tells us   that  for every $t>0$, $(S_t\mu)(\cdot)$ is bounded.
Thus,  for any  $(t,x)$ in $(0,\infty) \times \Rd$, the sequence $\{v_n(t,x)\}_{n=1}^\infty$ is non-decreasing and bounded. Consequently there exists a bounded   limit $v(t,x)= \lim_{n\to\infty}v_n(t,x)$.
Inequalities \eqref{eq:sol_ineq11} and \eqref{eq:sol_ineq12} follow from existence of the limit and \eqref{eq:pic_conv}.

Now consider the sequence  of equations which defines the   Picard iterations:
\begin{equation}\label{eq_pic_iter2}
\begin{split}
&v_{l+1}(x,t) = \int_{\mathbb{R}^d} p_{t-s}(x-y)\mu(\ud y) + \int_0^t \int_{\mathbb{R}^d} p_t(x-y)v^\gamma_l(s,y)\ud y \ud s,\\
 &\hspace { 6 cm} l =1,2,\ldots.
\end{split}
\end{equation}
We have already proved that  the left  side of \eqref{eq_pic_iter2} converges boundedly pointwise  to $v(t,x)$. From the monotone convergence theorem it follows that the  right side converges to
\begin{equation*}
\int_{\mathbb{R}^d} p_{t-s}(x-y)\mu(\ud y) + \int_{0}^t \int_{\mathbb{R}^d} p_t(x-y)v^\gamma(s,y))\ud y \ud s.
\end{equation*}
Thus $v(t,x)$ satisfies equation \eqref{eq:ll_eq} for all $x \in\Rd,\: t>0$.\par
Now let us   verify \eqref{eq:ll_eq_init_cond}.
In the following discussion we can assume without loss of generality   that  $\phi\in C^+_b(\Rd)\cap L^1(\Rd) $.
Since the   family of functions  $\{p_t(\cdot)\}_{t>0}$ builds up the  \emph{Dirac family} (see~\cite[pp. 284-287, 348]{Lang}),
we can easily conclude that
\begin{equation}\label{eq:pic_2}
\begin{split}
\lim_{t\searrow 0 } \langle S_t\mu,\phi \rangle
&= \langle \mu,\phi \rangle.
\end{split}
\end{equation}
Now let us  prove
\begin{equation}\label{eq:picard_iter_3}
\lim_{t\to 0} \int_{\Rd}\left(\int_0^t \left(S_{t-s} v^\gamma(s,\cdot))(x) \right)\ud s \right)\phi(x)\ud x
= 0.
\end{equation}

First, using  the inequality \eqref{eq:sol_ineq11} we obtain
\begin{equation*}
\begin{split}
 \int_0^t (S_{t-s}v^\gamma(s))(x)\ud s &\leq  \int_0^t \big(S_{t-s}\left(
 e^{s} S_s\mu + e^{s}\right)\ud s \\
&=  (e^{t}-1) (S_t\mu)(x)+ (e^{t}-1).
\end{split}
\end{equation*}
Using the above bound we get:
\begin{equation*}
\begin{split}
&\lim_{t\searrow 0} \int_{\Rd} \left(\int_0^t(S_{t-s} v^\gamma(s))(x)\ud s\right)\phi(x)\ud x\\
&\leq  \lim_{t\searrow 0} \int_{\Rd} \left( (e^{t}-1) (S_t\mu)(x)+ (e^{t}-1)\right)\phi(x)\ud x\\
&=  \lim_{t\searrow 0} (e^t-1)(\mu(\Rd)\left\|\phi \right\|_\infty+ \|\phi\|_1)=0,
\end{split}
\end{equation*}
and \eqref{eq:picard_iter_3} follows. Equations  \eqref{eq:picard_iter_3} and \eqref{eq:pic_2} imply \eqref{eq:ll_eq_init_cond} and this completes  the proof of the proposition.
\end{proof}

\begin{corollary}
\label{cor:sol_inLinfsp}
	Let $\mu \in \MFORd$ and let $v(\cdot,\cdot,\mu)$ be a solution
	of  \eqref{eq:ll_eq} obtained as a limit of the Picard iterations in Proposition \ref{prop:existence}.
	Then  $v(\cdot,\cdot,\mu) \in L_{loc} ^\infty\left(((0,\infty) , L^{1,w}_+(\mathbb{R}^d)\right)$.
\end{corollary}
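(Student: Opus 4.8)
The goal is to show that the solution $v(\cdot,\cdot,\mu)$ built by monotone Picard iteration in Proposition \ref{prop:existence} lies in $L^\infty_{loc}((0,\infty),L^{1,w}_+(\mathbb{R}^d))$; that is, for every compact $[a,b]\subset(0,\infty)$ the map $t\mapsto\|v(t,\cdot,\mu)\|_{1,w}=\int_{\Rd}w(x)v(t,x,\mu)\,\ud x$ is in $L^\infty([a,b])$. The plan is to bound $v(t,x,\mu)$ pointwise by the explicit function $e^t(S_t\mu)(x)+e^t$ from \eqref{eq:sol_ineq11}, and then show this majorant has uniformly bounded weighted $L^1$-norm on $[a,b]$.

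First I would write, using \eqref{eq:sol_ineq11} and non-negativity of $v$,
\begin{equation*}
\|v(t,\cdot,\mu)\|_{1,w}=\int_{\Rd}w(x)\,v(t,x,\mu)\,\ud x\leq e^t\int_{\Rd}w(x)(S_t\mu)(x)\,\ud x+e^t\int_{\Rd}w(x)\,\ud x,
\end{equation*}
for all $t\in(0,\infty)$. The second term equals $e^t$ since $\int w=1$, and on $[a,b]$ it is bounded by $e^b$. For the first term, by Lemma \ref{lm:simp_ineq} we have $(S_t\mu)(x)\leq\mu(\Rd)(2\pi t)^{-d/2}$ for all $x$, hence
\begin{equation*}
\int_{\Rd}w(x)(S_t\mu)(x)\,\ud x\leq\frac{\mu(\Rd)}{(2\pi t)^{d/2}}\int_{\Rd}w(x)\,\ud x=\frac{\mu(\Rd)}{(2\pi t)^{d/2}}.
\end{equation*}
Therefore $\|v(t,\cdot,\mu)\|_{1,w}\leq e^t\big(\mu(\Rd)(2\pi t)^{-d/2}+1\big)$, and on any compact interval $[a,b]\subset(0,\infty)$ this is bounded by $e^b\big(\mu(\Rd)(2\pi a)^{-d/2}+1\big)<\infty$. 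This shows $v(t,\cdot,\mu)\in L^{1,w}_+(\Rd)$ for each fixed $t$ and that $t\mapsto\|v(t,\cdot,\mu)\|_{1,w}$ is bounded on every compact subinterval of $(0,\infty)$, which is exactly the definition of membership in $L^\infty_{loc}((0,\infty),L^{1,w}_+(\Rd))$.

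One should also check the measurability requirement implicit in the definition of $L^\infty_{loc}((0,\infty),L^{1,w}_+(\Rd))$: the map $(t,x)\mapsto v(t,x,\mu)$ is measurable on $(0,\infty)\times\Rd$ because it is a pointwise monotone limit of the Picard iterates $v_n$, each of which is measurable (being built from $S_t\mu$ and iterated space-time integrals of measurable functions). There is no real obstacle here — the only mild subtlety is the integrability of the majorant near $t=0$, which is why the statement is local on $(0,\infty)$ rather than global: the factor $(2\pi t)^{-d/2}$ blows up as $t\downarrow0$, but that is harmless once we restrict to $[a,b]$ with $a>0$. So the proof is essentially immediate from Proposition \ref{prop:existence} together with Lemma \ref{lm:simp_ineq}.
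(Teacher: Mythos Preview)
Your proof is correct and follows essentially the same approach as the paper: use the pointwise bound \eqref{eq:sol_ineq11} from Proposition~\ref{prop:existence} and then control the weighted $L^1$-norm of the majorant, which the paper summarizes as ``standard Gaussian bounds.'' You have simply filled in the details (via Lemma~\ref{lm:simp_ineq}) that the paper leaves to the reader.
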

\begin{proof}
	Let $v(\cdot,\cdot,\mu)$ be a  solution constructed in Proposition~\ref{prop:existence}. Using the bound~\eqref{eq:sol_ineq11}, it is 
	easy to derive the result by standard Gaussian bounds.
\end{proof}

\subsection{Continuity of solutions}
\label{subsec:cont}
In this section we will prove the continuity of the solution obtained in Proposition \ref{prop:existence}. 

We start with the technical lemma, whose proof is 
 pretty standard, and therefore it is  omitted.  
\begin{lemma}\label{cor:dominating_func}
Fix $0<T_1<T_2$ and $r>0$.
Let $\{p_s(\cdot + z),\: s \in \left[T_1,T_2\right],\:|z|\leq r\}$ be a family of functions, where $p_s(\cdot)$ is a standard Gaussian kernel on $\Rd$.
Then, there exists a constant $K$, such that
\begin{equation*}
p_s(x+z) \leq  Kp_{2T_2}(x),\: \forall s \in [T_1,T_2],\: |z|\leq r, x\in \Rd. 
\end{equation*}

\end{lemma}
Now we are ready to state and prove the main proposition of Section~\ref{subsec:cont}.
\begin{proposition}[Continuity]\label{prop:continuity}
Let $\mu \in \MFORd$ and
let  $v(\cdot,\cdot)$ be a solution of \eqref{eq:ll_eq}
obtained as   a limit of Picard iterations in Proposition  \ref{prop:existence}. Then $v(\cdot,\cdot)  \in C^+((0,\infty)\times \Rd)$.
\end{proposition}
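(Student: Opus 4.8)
The plan is to show that $v(t,x)$, being the increasing limit of the Picard iterations $v_n(t,x)$, is in fact continuous on $(0,\infty)\times\Rd$. The natural route is a two-step argument: first establish that each Picard iterate $v_n(\cdot,\cdot)$ is continuous on $(0,\infty)\times\Rd$, then upgrade the pointwise monotone convergence to locally uniform convergence so that the limit $v$ inherits continuity.

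For the first step, I would argue by induction on $n$. The base case $v_1(t,x)=(S_t\mu)(x)=\int_\Rd p_t(x-y)\mu(\ud y)$ is continuous on $(0,\infty)\times\Rd$ by standard properties of the heat kernel together with dominated convergence (using Lemma~\ref{cor:dominating_func} to produce an integrable dominating function on a neighbourhood of any point $(t_0,x_0)$ with $t_0>0$, and the finiteness of $\mu$). For the inductive step, assuming $v_n(\cdot,\cdot)$ is continuous, hence bounded on compact subsets of $(0,\infty)\times\Rd$, and using the bound $v_n^\gamma(s,x)\le e^s(S_s\mu)(x)+e^s$ from Lemma~\ref{lm:picard_iters}(2), I would show that
\begin{equation*}
(t,x)\mapsto \int_0^t \big(S_{t-s}v_n^\gamma(s,\cdot)\big)(x)\,\ud s
\end{equation*}
is continuous. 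The delicate point is the singularity of the integrand as $s\uparrow t$: one splits the integral into $\int_0^{t-\varepsilon}$ and $\int_{t-\varepsilon}^t$. On $[0,t-\varepsilon]$ the kernel $p_{t-s}$ stays nondegenerate and continuity in $(t,x)$ follows by dominated convergence (again via Lemma~\ref{cor:dominating_func}); the tail $\int_{t-\varepsilon}^t$ is bounded, uniformly near $(t_0,x_0)$, by something like $(e^\varepsilon-1)\big(\sup_{\text{nbhd}}S_\cdot\mu+1\big)$ using Lemma~\ref{lm:simp_ineq}, which is small with $\varepsilon$. Then $v_{n+1}=S_t\mu + (\text{that integral})$ is continuous.

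For the second step, I would show the convergence $v_n\nearrow v$ is uniform on compact subsets of $(0,\infty)\times\Rd$; since a locally uniform limit of continuous functions is continuous, this finishes the proof. Here I would use Dini's theorem: on a fixed compact $K\subset(0,\infty)\times\Rd$ the $v_n$ are continuous, increase pointwise to $v$, and if one also knows $v$ itself is continuous on $K$ then Dini applies directly. Since continuity of $v$ is exactly what we are trying to prove, the cleaner variant is to run a direct estimate: subtracting the integral equations for $v$ and $v_n$ and using $|a^\gamma-b^\gamma|\le |a-b|^\gamma$ for $\gamma\in(0,1)$ (or rather $a^\gamma-b^\gamma\le\gamma b^{\gamma-1}(a-b)$ on the relevant range together with the lower bound $v\ge((1-\gamma)t)^{\gamma'}$, which keeps $b^{\gamma-1}$ bounded on $[\varepsilon,T]$), one obtains a Gronwall-type recursion in $n$ forcing $\sup_K(v-v_n)\to 0$. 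Either way the key input is the uniform lower bound $((1-\gamma)t)^{\gamma'}$ on $v$ from Proposition~\ref{prop:existence} / Theorem~\ref{thm:ag_es_res}, which tames the non-Lipschitz nonlinearity $\lambda\mapsto\lambda^\gamma$ away from the origin.

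The main obstacle I anticipate is handling the time-singularity of the convolution term near $s=t$ simultaneously with the non-Lipschitz nature of $\lambda^\gamma$ at $\lambda=0$. The first is routine once one commits to the split-and-dominate strategy with Lemma~\ref{cor:dominating_func}; the second is precisely where the lower bound on $v$ is indispensable, since it confines all the relevant values of the iterates to a region where $\lambda\mapsto\lambda^\gamma$ is Lipschitz, and thus where both the induction on $n$ and the uniform convergence step go through with ordinary Gronwall/Dini arguments rather than anything more delicate.
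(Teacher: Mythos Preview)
Your step~1 (continuity of each iterate $v_n$) is fine and uses essentially the same split-and-dominate mechanics the paper uses. The gap is in step~2. The Dini route is circular, as you note. The Gronwall route does not work as written: in the recursion
\[
v - v_{n+1} = \int_0^t S_{t-s}\bigl(v^\gamma(s)-v_n^\gamma(s)\bigr)\,\ud s,
\]
the mean-value bound $a^\gamma-b^\gamma\le \gamma\,b^{\gamma-1}(a-b)$ with $a=v\ge b=v_n$ requires a \emph{lower} bound on $v_n$, not on $v$. The estimate $v\ge((1-\gamma)t)^{\gamma'}$ controls $a^{\gamma-1}$, which is the wrong factor; and the individual iterates (e.g.\ $v_1=S_t\mu$) have no uniform positive lower bound on compacts. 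The alternative bound $|a^\gamma-b^\gamma|\le|a-b|^\gamma$ gives only a sublinear recursion $w_{n+1}\le\int_0^t S_{t-s}w_n^\gamma\,\ud s$, for which there is no Gronwall-type contraction. So locally uniform convergence is not established, and the limit's continuity does not follow from this route.

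The paper avoids this difficulty entirely by working directly with $v$ rather than through the iterates. Fixing $(t,x)$ and writing out $v(t+\Delta t,x+\Delta x)-v(t,x)$ via the integral equation, it splits the time integral into a bulk piece over $[\delta_3,t-\delta_3]$ (handled by dominated convergence using Lemma~\ref{cor:dominating_func}) and boundary pieces over $[0,\delta_3]$ and $[t-\delta_3,t+\Delta t]$ (made small using only the \emph{upper} bound $v^\gamma(s,y)\le e^s(S_s\mu)(y)+e^s$ from Proposition~\ref{prop:existence} together with Lemma~\ref{lm:simp_ineq}). No lower bound on $v$ and no uniform convergence of iterates are needed; the argument is a direct $\varepsilon$--$\delta$ estimate on the solution itself. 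Your step~1 ingredients are exactly the right ones---you should apply them to $v$ directly instead of trying to pass through a limit.
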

\begin{proof}
By construction the solution is clearly non-negative.
Now, let us  fix a point $(t,x)\in (0,\infty)\times \Rd$ and an arbitrary $\epsilon >0$. Let $\delta_i>0, i = 1,2,3,\:\, \delta_1<\delta_3  <t/10$.
In what follows we will show that $\delta_1$, $\delta_2$ and $\delta_3$ can be chosen sufficiently small so that  if  $ |\Delta t| <\delta_1$ and $|\Delta x|<\delta_2$ then
\begin{align}\label{eq:cont_thm}
\left| v(t+\Delta t, x +\Delta x ) - v(t,x)\right|\leq \epsilon.
\end{align}
We will bound  absolute value  of difference $v(t+\Delta t, x +\Delta x ) - v(t,x)$ only for the case of   $\Delta t\geq 0$, since   the case of $\Delta t<0$ can be treated similarly.
We split the difference $v(t+\Delta t, x +\Delta x ) - v(t,x)$ as follows:
\begin{align*}
v(t+\Delta t,x+\Delta x)-v(x,t) &= I(\Delta t, \Delta x) + J_1(\Delta t, \Delta x)- J_2(\Delta t, \Delta x) + J_3(\Delta t, \Delta x)\\
& +J_4(\Delta t, \Delta x)-J_5(\Delta t, \Delta x).
\end{align*}
Here
\begin{align*}
I(\Delta t,\Delta x) &= (S_{t+\Delta t }\mu)(x+\Delta x)- (S_{t}\mu)(x),\\
J_1(\Delta t,\Delta x) &= \intfour{t-\delta_3}{t+\Delta t}{t+\Delta t -s }{x+\Delta x},\\
J_2(\Delta t,\Delta x) &= \intfour{t-\delta_3}{t}{t -s }{x},\\
J_3(\Delta t,\Delta x) &= \intfour{\delta_3}{t-\delta_3}{t+\Delta t -s }{x+\Delta x}- \intfour{\delta_3 }{t-\delta_3}{t -s }{x},\\
J_4(\Delta t,\Delta x) &= \intfour{0}{\delta_3}{t+\Delta t -s}{x+\Delta x},\\
J_5(\Delta t,\Delta x) &= \intfour{0}{\delta_3}{t -s}{x}.
\end{align*}
Note that the integrals $J_1,J_2,J_4$ and $J_5$ have the same form:
\begin{equation}\label{eq:cont_thm_int}
J_{*} = \intfour{t_1}{t_2}{t_3 -s }{z},
\end{equation}
for appropriate $t_1,t_2,t_3\geq 0$ and $z\in \Rd$.
From the definitions of $\delta_1$, $\delta_3$  and $\Delta t$, it  follows that   $t_1,t_2$ and $t_3$ in \eqref{eq:cont_thm_int}  can vary but satisfy the inequalities $t_1<t_2$, $ t\leq t_3 $ and $t_2- t_1\leq 2 \delta_3$ hold.
Let us  bound $J_*$ from above. By Lemma~\ref{lm:simp_ineq} and Proposion \ref{prop:existence},
we easily get
\begin{equation}\label{eq:cont_gen_ineq}
\begin{split}
J_{*}& = \intfour{t_1}{t_2}{t_3 -s }{z}\\
&\leq \int_{t_1}^{t_2} \left(S_{t_3-s} ( e^s ( S_s\mu + 1))\right)(z) \ud s\\
&= \int_{t_1}^{t_2} e^s\left(\left(S_{t_3}\mu\right)(z) + 1\right)\ud s \\
&= (e^{t_2}-e^{t_1})\left(\left(S_{t_3}\mu\right)(z) + 1\right)\\
&\leq \frac{(e^{t_2}-e^{t_1})\left(\mu(\Rd)+1\right)}{(2\pi t_3)^{d/2}}\\
&\leq \frac{(e^{t_2}-e^{t_1})\left(\mu(\Rd)+1\right)}{(2\pi t)^{d/2}}
\end{split}
\end{equation}
where the last inequality follows from $t\leq t_3$. Recall that $t_2-t_1\leq 2\delta_3$,  and so by \eqref{eq:cont_gen_ineq}
we can choose $\delta_3$ sufficiently small so that, for $i =1,2,4,5$
\begin{align}\label{eq:cont_0}
J_i (\Delta t,\Delta x) \leq \epsilon/10,\: \Delta t<\delta_1 <\delta_3.
\end{align}
Let us  fix such  $\delta_3$. Let us recall    that $\Delta t<\delta_1<\delta_3$.
Now we will handle $J_3(\Delta t,\Delta x)$.
Write $J_3$ as $J_3(\Delta t,\Delta x)=J_{31}(\Delta t,\Delta x) - J_{32}$, where
\begin{align}
J_{31}(\Delta t,\Delta x)& = \int_{\delta_3}^{t-\delta_3} \int_{\Rd}p_{t+\Delta t-s }(x+\Delta x - y)v^\gamma(s,y)\ud y\ud s,\label{eq:int_j31}\\
J_{32}&= \int_{\delta_3}^{t-\delta_3}\int_{\Rd} p_{t-s}(x-y)v^\gamma(s,y)\ud y \ud s.
\end{align}
By Lemma~\ref{cor:dominating_func} and Proposition \ref{prop:existence},  we immediately get that there exists $K=K(\delta_1,\delta_1,\delta_3,t)$ such that
\begin{align*}
p_{t+\Delta t-s }(x+\Delta x - y)v^\gamma(s,y) &\leq Kp_{2t}(x-y)e^s((S_s\mu)(y)+1)\\
& \forall \Delta t \in (0,\delta_1) , s \in (\delta_3,t-\delta_3), |\Delta x|<\delta_2.
\end{align*}

It is easy to verify that
\begin{align*}
\int_{\delta_3}^{t-\delta_3} \int_{\Rd} K p_{2t}(x-y) e^s \left(\left(S_s\mu\right)(y)+1\right)\ud y \ud s <\infty.
\end{align*}
Therefore we can use the dominated convergence theorem and obtain:
\begin{equation}\label{eq:cont_1}
\begin{split}
\lim_{\substack{\Delta t \rightarrow 0 \\ \Delta x \rightarrow 0 }} J_{31}(\Delta t, \Delta x )&=\lim_{\substack{\Delta t \longrightarrow 0 \\ \Delta x \longrightarrow 0 }}
\int_{\delta_3}^{t-\delta_3} \int_{\Rd}p_{t+\Delta t -s }(x+\Delta x -y )v^\gamma(s,y)\ud y\ud s\\
&= \int_{\delta_3}^{t-\delta_3} \int_{\Rd}p_{t -s }(x -y )v^\gamma(s,y)\ud y\ud s\\
&= J_{32}.
\end{split}
\end{equation}
Similarly we show
\begin{equation}\label{eq:cont_2}
\lim_{\substack{\Delta t \longrightarrow 0 \\ \Delta x \longrightarrow 0 }}
(S_{t+\Delta t}\mu)(x+\Delta x) = (S_{t}\mu)(x),
\end{equation}
and thus from~\eqref{eq:cont_1} , \eqref{eq:cont_2} and the definition of $I(\Delta t, \Delta x ), J_{3}(\Delta t, \Delta x )$ we get 
\begin{equation}
\lim_{\substack{\Delta t \longrightarrow 0 \\ \Delta x \longrightarrow 0 }}I(\Delta t, \Delta x )+J_{3}(\Delta t, \Delta x )=0. 
\end{equation}
This implies that there exist $\delta_1,\delta_2\in (0,\delta_3)$ sufficiently small such that 
  for $|\Delta t| <\delta_1$ and $|\Delta x|<\delta_2$
\begin{align}\label{eq:cont_3}
\left| J_3(\Delta t,\Delta x) \right| + \left| J_3(\Delta t,\Delta x) \right| \leq \epsilon/2.
\end{align}
Thus  we get from  \eqref{eq:cont_0} and \eqref{eq:cont_3}  that
\begin{align*}
&\left|v(t+\Delta t, x+\Delta x) - v(t,x)\right| \\
&= | I(t+\Delta t, x+\Delta x) + J_1(t+\Delta t, x+\Delta x)- J_2(t+\Delta t, x+\Delta x) \\
&\quad\: + J_3(t+\Delta t, x+\Delta x) + J_4(t+\Delta t, x+\Delta x) -J_5(t+\Delta t, x+\Delta x) |\\
&\leq \left|I(t+\Delta t, x+\Delta x) \right| + \left|J_1(t+\Delta t, x+\Delta x) \right| + \left|J_2(t+\Delta t, x+\Delta x) \right| \\
& \quad\:  + \left|J_3(t+\Delta t, x+\Delta x) \right| + \left|J_4(t+\Delta t, x+\Delta x) \right| + \left|J_5(t+\Delta t, x+\Delta x) \right| \\
&\leq \epsilon,\: \forall \Delta t ,\Delta x :\: \Delta t \in (0, \delta_1), |\Delta x| < \delta_2.
\end{align*}
Since $\epsilon >0$ was arbitrary we are done.
\end{proof}

\subsection{Uniqueness of solutions }

The proof of uniqueness is again  based on proofs in~\cite{AgEs} which are adjusted to our case.
Let us recall that 
$\gamma'= 1/(1-\gamma).$

We start with the technical lemma, whose proof is 
 pretty standard, and therefore it is  omitted.  
\begin{lemma}\label{lm:heat_semi_action_on_exp}
Let $\mu \in \MFORd$. Then, for every $t>0$, there exist two positive constants
$c(t)$, $a(t)$ such that following inequality holds:
\begin{equation*}
(S_t\mu)(x)  \geq c(t)\cdot e^{-a(t)|x|^2},\: \forall x \in \Rd.
\end{equation*}
\end{lemma}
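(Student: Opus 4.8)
The plan is to produce the Gaussian lower bound for $S_t\mu$ by exploiting the fact that $\mu$ is a \emph{non-zero} finite measure, so that it puts positive mass on some bounded region. First I would fix $t>0$. Since $\mu\in\MFORd$, there is a radius $R>0$ with $\mu(B_R(0))>0$; set $m\defeq\mu(B_R(0))>0$. Then for any $x\in\Rd$,
\begin{equation*}
(S_t\mu)(x)=\int_{\Rd}p_t(x-y)\,\mu(\ud y)\geq\int_{B_R(0)}p_t(x-y)\,\mu(\ud y)
=\int_{B_R(0)}\frac{1}{(2\pi t)^{d/2}}\,e^{-|x-y|^2/(2t)}\,\mu(\ud y).
\end{equation*}

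Next I would bound the Gaussian exponent uniformly over $y\in B_R(0)$. Using $|x-y|^2\leq 2|x|^2+2|y|^2\leq 2|x|^2+2R^2$ for $y\in B_R(0)$, we get $e^{-|x-y|^2/(2t)}\geq e^{-R^2/t}\,e^{-|x|^2/t}$ for all such $y$. Plugging this into the integral yields
\begin{equation*}
(S_t\mu)(x)\geq\frac{m\,e^{-R^2/t}}{(2\pi t)^{d/2}}\,e^{-|x|^2/t}
\eqdist\; c(t)\,e^{-a(t)|x|^2},
\end{equation*}
so one takes $c(t)=\dfrac{m\,e^{-R^2/t}}{(2\pi t)^{d/2}}>0$ and $a(t)=1/t>0$, both strictly positive as required. (Here the symbol $\eqdist$ is a slip; it should just be $=$ — but in the actual writeup I would simply write the equality.)

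There is essentially no obstacle here: the only point requiring care is that the bound must hold with constants depending on $t$ but \emph{not} on $x$, which is exactly why one first restricts the integral to a fixed compact set $B_R(0)$ before estimating the exponent — on that compact set the crude inequality $|x-y|^2\leq 2|x|^2+2R^2$ is enough, whereas integrating over all of $\Rd$ would not give a clean $x$-uniform constant. The non-triviality of $\mu$ is used precisely once, to guarantee $m>0$; if $\mu$ were the zero measure the statement would obviously fail, which is why the hypothesis $\mu\in\MFORd$ (rather than merely $\mu\in\MFRd$) is needed. This is why the proof is labeled "pretty standard" and omitted in the paper, and the sketch above is all that is required.
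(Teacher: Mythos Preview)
Your argument is correct and is exactly the standard proof one has in mind here: restrict to a ball carrying positive $\mu$-mass, use $|x-y|^2\leq 2|x|^2+2R^2$ on that ball, and read off explicit constants $c(t)$ and $a(t)$. Since the paper omits the proof as ``pretty standard'', there is nothing to compare against; your sketch is precisely the intended one (just replace the stray $\eqdist$ by $=$ in the final display, as you already noted).
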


In the next lemma we proof an important lower bound. 
\begin{lemma}\label{lm:uniq_inequality} 
Let $\mu \in \MFORd$  and $u(t,x)$ be a non-negative function on $(0,\infty)\times\mathbb{R}^d$ such that, for any $t\in (0,\infty)$ and any $x\in\mathbb{R}^d:$
\begin{displaymath}
v(t,x) \geq S_t\mu + \int_0^t \left(S_{t-s} v^\gamma ( s) \right)(x)\ud s .
\end{displaymath}
Then
\begin{align}\label{lm_uniq_0}
v(t,x) > \left(\left(1-\gamma\right)t\right)^{\gamma'}, \: \forall (t,x) \in (0,\infty)\times \Rd.
\end{align}
\end{lemma}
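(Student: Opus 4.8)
The plan is to establish the bound~\eqref{lm_uniq_0} by comparing $v$ with the spatially homogeneous solution and exploiting that the nonlinearity $\lambda\mapsto\lambda^\gamma$ is increasing on $\mathbb{R}_+$, so the integral inequality is monotone in the unknown. The key observation is that the function $h(t)\equiv((1-\gamma)t)^{\gamma'}$ solves the ODE $h'(t)=h^\gamma(t)$, $h(0)=0$, and hence satisfies the integral identity $h(t)=\int_0^t h^\gamma(s)\,ds = \int_0^t \big(S_{t-s}h^\gamma(s)\big)(x)\,ds$, using that $S_r$ fixes constants. So $h$ is a subsolution of~\eqref{eq:ll_eq} (with the $S_t\mu$ term dropped), and one expects $v\ge h$; the strict inequality then comes from the extra strictly positive contribution of $S_t\mu$.

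First I would set up an iteration. Define $u_0(t,x)\equiv 0$ and $u_{n+1}(t,x) = (S_t\mu)(x) + \int_0^t\big(S_{t-s}u_n^\gamma(s,\cdot)\big)(x)\,ds$. Since $\mu\in\MFORd$ is nonnegative, $u_1=S_t\mu\ge 0$, and by induction (monotonicity of $\lambda\mapsto\lambda^\gamma$ and of $S_r$) the sequence $\{u_n\}$ is nondecreasing; moreover by the hypothesis on $v$ and the same monotone-iteration argument, $u_n(t,x)\le v(t,x)$ for every $n$. Thus it suffices to show $\lim_n u_n(t,x)\ge h(t)$, or rather to show directly that $v(t,x)\ge h(t)$ and then upgrade to strict inequality.

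For the comparison with $h$, I would argue as follows. Fix $(t,x)$. By Lemma~\ref{lm:heat_semi_action_on_exp} there are constants $c(s),a(s)>0$ with $(S_s\mu)(y)\ge c(s)e^{-a(s)|y|^2}>0$ for all $y$, so $v(s,y)\ge (S_s\mu)(y)>0$ for all $(s,y)\in(0,\infty)\times\Rd$; in particular $v$ is strictly positive everywhere. Now iterate the inequality: from $v\ge S_t\mu + \int_0^t S_{t-s}v^\gamma(s)\,ds$ and $v^\gamma\ge (S_s\mu)^\gamma\ge$ (something strictly positive and locally bounded below), one gets $v(t,x)\ge \int_0^t\big(S_{t-s}v^\gamma(s,\cdot)\big)(x)\,ds$. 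Define $m(t)=\inf_x v(t,x)$ — but this may be $0$ at infinity, so instead I would track the pointwise lower bound along the iteration. A cleaner route: let $\underline v(t)$ be defined by $\underline v(0)=0$ and the ODE integral equation $\underline v(t)=\int_0^t \underline v^\gamma(s)\,ds$, whose minimal nonnegative solution is exactly $h(t)$ (and the zero function; the point is that any function dominating a strictly positive quantity and closed under the monotone map sits above $h$). Concretely, set $z_0\equiv 0$ and $z_{n+1}(t)=\int_0^t z_n^\gamma(s)\,ds$; then $z_n\uparrow h$ pointwise. Since $v(t,x)\ge \int_0^t(S_{t-s}v^\gamma(s,\cdot))(x)\,ds$ and $v\ge z_0=0$, induction gives $v(t,x)\ge \int_0^t(S_{t-s}z_n^\gamma(s,\cdot))(x)\,ds = \int_0^t z_n^\gamma(s)\,ds = z_{n+1}(t)$ using that $S_r$ preserves the constant $z_n^\gamma(s)$; letting $n\to\infty$ yields $v(t,x)\ge h(t)=((1-\gamma)t)^{\gamma'}$.

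Finally, to get the \emph{strict} inequality, I would feed this back once more: $v(t,x)\ge (S_t\mu)(x) + \int_0^t\big(S_{t-s}v^\gamma(s,\cdot)\big)(x)\,ds \ge (S_t\mu)(x) + \int_0^t\big(S_{t-s}h^\gamma(s)\big)(x)\,ds = (S_t\mu)(x) + h(t) > h(t)$, where the last step uses $(S_t\mu)(x)>0$ for all $(t,x)\in(0,\infty)\times\Rd$ because $\mu$ is a nonzero nonnegative finite measure and $p_t>0$ everywhere (this is where $\mu\in\MFORd$ rather than $\mu=0$ matters, and it is made quantitative by Lemma~\ref{lm:heat_semi_action_on_exp}). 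I expect the only mildly delicate point to be the bookkeeping that justifies $u_n\le v$ and the passage to the limit under the integral sign (monotone convergence, using that each $z_n^\gamma(s)$ is a finite constant so $S_{t-s}$ acts trivially); everything else is the standard monotone sub/supersolution comparison, following the structure in~\cite{AgEs}.
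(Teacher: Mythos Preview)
Your argument has a genuine gap at the step where you claim that the iterates
\[
z_0\equiv 0,\qquad z_{n+1}(t)=\int_0^t z_n^\gamma(s)\,\ud s
\]
increase to $h(t)=((1-\gamma)t)^{\gamma'}$. They do not: since $0^\gamma=0$, one has $z_1\equiv 0$, and by induction $z_n\equiv 0$ for every $n$. This is exactly the non-uniqueness of the ODE $h'=h^\gamma$, $h(0)=0$, that you yourself flagged: the Picard scheme started at $0$ selects the \emph{minimal} nonnegative solution, which is the zero function, not $h$. Consequently the chain ``$v\ge z_0\Rightarrow v\ge z_1\Rightarrow\cdots$'' is correct but vacuous, and your conclusion $v\ge h$ does not follow from it. The earlier observation that $v(s,y)\ge (S_s\mu)(y)>0$ is true pointwise but does not give a \emph{uniform} positive lower bound (indeed $\inf_x(S_s\mu)(x)=0$), so it cannot serve as a strictly positive seed for the constant-in-$x$ iteration $z_n$.

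The paper circumvents this by a time shift rather than by iterating from zero. Fixing $t_0>0$ and setting $\tilde v(t)=v(t+t_0)$, one checks that $\tilde v$ is again a supersolution, now with initial datum $\tilde v(0)\ge S_{t_0}\mu$, which by Lemma~\ref{lm:heat_semi_action_on_exp} dominates $c(t_0)e^{-a(t_0)|x|^2}$. This Gaussian lower bound is precisely the class of initial data for which Lemma~2.2 of~\cite{AgEs} applies and yields $\tilde v(t,x)\ge((1-\gamma)t)^{\gamma'}$; letting $t_0\downarrow 0$ gives $v(t,x)\ge((1-\gamma)t)^{\gamma'}$. Your final ``bootstrap'' step for the \emph{strict} inequality,
\[
v(t,x)\ \ge\ (S_t\mu)(x)+\int_0^t\big(S_{t-s}h^\gamma(s)\big)(x)\,\ud s\ =\ (S_t\mu)(x)+h(t)\ >\ h(t),
\]
is correct and can be appended once the weak inequality is in hand; but to reach that point you must either import the \cite{AgEs} lemma for Gaussian-bounded initial data (as the paper does) or otherwise produce a strictly positive, $x$-dependent subsolution that the monotone map pushes up to $h$. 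Starting the comparison iteration at zero cannot work.
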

\begin{proof}
Let us fix an arbitrary $t_0>0$ and define
\begin{align*}
\tilde{v}(t)\equiv v(t+t_0),\: \forall t \geq 0.
\end{align*}
Using this definition one can easily check that  
\begin{align*}
\tilde{v}(t) 
&\geq S_t \tilde v_0 + \int_0^t S_{t-s} \tilde{v}^\gamma(s)\ud s, \quad t\geq 0,
\end{align*}
where $\tilde{v}_0 =  \tilde v(0) \geq S_{t_0}\mu$, and  the last inequality follows by definition of $\tilde v$ and assumptions on $v$. Thus, by Lemma~\ref{lm:heat_semi_action_on_exp} there exists  $a(t_0)>0$ and $c(t_0)>0$ such that
\begin{align*}
\tilde{v}_0(x) \geq \left(S_{t_0}\mu\right)(x) \geq c(t_0) e^{-a(t_0)|x|^2},\: \forall x \in \Rd.
\end{align*}
By Lemma~$2.2$ in~\cite{AgEs}  we have
\begin{equation*}
\tilde{v}(t,x) \geq ((1-\gamma)t)^{\gamma'}, \forall (t,x)\in (0,\infty)\times \Rd.
\end{equation*}
 Since $t_0 \geq 0 $ was arbitrary, we get
\begin{align*}
v(t,x) \geq ((1-\gamma)t)^{\gamma'}, \forall (t,x)\in (0,\infty)\times \Rd,
\end{align*}
and we are done. 
\end{proof}

\begin{lemma}[Comparison lemma]\label{lm:comp}
Let
\begin{displaymath}
v,u\in L_{loc} ^\infty\left((0,\infty) , L^{1,w}(\mathbb{R}^d)\right)\cap C((0,\infty)\times \Rd)
\end{displaymath}
 be non-negative functions such that, for all $t>0$,
\begin{equation*}
\begin{split}
u(t) \geq S_t\nu + \int_0^t S_{t-s} u^\gamma(s) \ud s,  \\
v(t) \leq S_t\mu + \int_0^t S_{t-s} v^\gamma(s) \ud s .
\end{split}
\end{equation*}
Here $\mu$, $\nu \in \MFORd$  are such that
\begin{equation*}
\nu(f) \geq \mu(f) ,\forall f  \in C^+_b(\mathbb{R}^d).
\end{equation*}
Then
\begin{equation*}
u(t,x) \geq v(t,x), \:\:\textnormal{for all}\: (t,x) \in (0,\infty)\times\mathbb{R}^d.
\end{equation*}
\end{lemma}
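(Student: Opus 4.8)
The plan is to reduce the problem to the comparison result already available for bounded initial data in \cite{AgEs} by a shifting-in-time argument, exactly in the spirit of the proof of Lemma~\ref{lm:uniq_inequality}. Fix an arbitrary $t_0>0$ and set $\tilde u(t,x) = u(t+t_0,x)$ and $\tilde v(t,x) = v(t+t_0,x)$ for $t\geq 0$. Using the semigroup property of $\{S_t\}_{t\geq 0}$ and splitting the time integral at $t_0$, one checks directly that
\begin{equation*}
\tilde u(t) \geq S_t\tilde u_0 + \int_0^t S_{t-s}\tilde u^\gamma(s)\,\ud s, \qquad
\tilde v(t) \leq S_t\tilde v_0 + \int_0^t S_{t-s}\tilde v^\gamma(s)\,\ud s,
\end{equation*}
where $\tilde u_0 = \tilde u(0,\cdot) = u(t_0,\cdot)$ and $\tilde v_0 = \tilde v(0,\cdot) = v(t_0,\cdot)$. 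Since $u,v\in C((0,\infty)\times\Rd)$, these are genuine bounded continuous functions on $\Rd$, and they inherit the right integrability. The key point is that at the shifted initial time $t_0$ the "initial data'' $\tilde u_0$, $\tilde v_0$ are functions, not measures, so the ordering hypothesis must be transferred to that level.

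Next I would establish the pointwise inequality $\tilde u_0(x)\ge \tilde v_0(x)$ for all $x\in\Rd$, i.e. $u(t_0,x)\ge v(t_0,x)$. This follows by comparing the two integral inequalities for the semilinear equation back at time $t_0$: from $u(t_0)\ge S_{t_0}\nu + \int_0^{t_0}S_{t_0-s}u^\gamma(s)\,\ud s$ and $v(t_0)\le S_{t_0}\mu + \int_0^{t_0}S_{t_0-s}v^\gamma(s)\,\ud s$ together with $S_{t_0}\nu \ge S_{t_0}\mu$ pointwise (which is exactly the hypothesis $\nu(f)\ge\mu(f)$ for all $f\in C^+_b(\Rd)$, applied to $f = p_{t_0}(x-\cdot)$, noting $p_{t_0}(x-\cdot)\in C^+_b(\Rd)$). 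Actually, to avoid circularity I would instead argue as follows: apply the comparison theorem of \cite{AgEs} (Theorem~2.8 there, which also gives the monotonicity $V_t(f_1)\le V_t(f_2)$ when $f_1\le f_2$, cf. Theorem~\ref{thm:ag_es_res}(3)) on the whole interval $(0,\infty)$ but started from the supersolution/subsolution pair. Concretely, $\tilde u$ is a supersolution to \eqref{eq:ll_eq_int} with bounded continuous initial datum $\tilde u_0\ge S_{t_0}\mu$, and one applies the Aguirre--Escobedo comparison (a subsolution lies below a supersolution when the initial data are ordered) to get $\tilde u(t,x)\ge \tilde v(t,x)$ for all $(t,x)\in(0,\infty)\times\Rd$. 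Since this holds for every $t_0>0$ and $\tilde u(t,x)=u(t+t_0,x)$, $\tilde v(t,x)=v(t+t_0,x)$, letting $t_0$ range over $(0,\infty)$ yields $u(t,x)\ge v(t,x)$ for all $(t,x)\in(0,\infty)\times\Rd$, as claimed.

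The main obstacle is the transfer step: the comparison results in \cite{AgEs} are stated for \emph{function}-valued initial data, while here $\mu,\nu$ are measures, so the naive application at time $0$ is not legitimate. The time-shift device circumvents this because at any positive time $t_0$ the solutions are bounded continuous functions (by Proposition~\ref{prop:continuity} and the bound \eqref{eq:sol_ineq11}), and the inequality $S_{t_0}\nu\ge S_{t_0}\mu$ — which is where the hypothesis $\nu(f)\ge\mu(f)$ for all $f\in C^+_b(\Rd)$ is used — reduces the measure-level ordering to an honest pointwise ordering of continuous functions. One should be slightly careful that the supersolution/subsolution inequalities genuinely survive the shift: this is routine since $S_{t_0+r} = S_r S_{t_0}$ and $\int_0^{t_0+t}S_{t_0+t-s}(\cdot)\,\ud s = S_t\!\int_0^{t_0}S_{t_0-s}(\cdot)\,\ud s + \int_0^t S_{t-s}(\cdot)\,\ud s$ after the change of variables $s\mapsto s+t_0$ in the second piece, and $S_t$ is positivity-preserving and monotone. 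Beyond that bookkeeping, everything follows from the cited comparison principle.
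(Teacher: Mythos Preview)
Your time-shift strategy has a genuine circularity that you flagged but did not actually remove. After the shift by $t_0$, the inequalities that survive are
\[
\tilde u(t)\ \ge\ S_t w_u + \int_0^t S_{t-s}\tilde u^\gamma(s)\,\ud s,
\qquad
\tilde v(t)\ \le\ S_t w_v + \int_0^t S_{t-s}\tilde v^\gamma(s)\,\ud s,
\]
with effective initial data
\[
w_u = S_{t_0}\nu + \int_0^{t_0}S_{t_0-s}u^\gamma(s)\,\ud s,
\qquad
w_v = S_{t_0}\mu + \int_0^{t_0}S_{t_0-s}v^\gamma(s)\,\ud s,
\]
and \emph{not} $\tilde u_0=u(t_0,\cdot)$, $\tilde v_0=v(t_0,\cdot)$ as you wrote. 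Indeed, the original inequalities at time $t_0$ give $\tilde u_0\ge w_u$ and $\tilde v_0\le w_v$, which go the wrong way for replacing $w_u,w_v$ by $\tilde u_0,\tilde v_0$ in the displayed inequalities. To invoke the Aguirre--Escobedo comparison you therefore need $w_u\ge w_v$, i.e.
\[
\int_0^{t_0} S_{t_0-s}\bigl(u^\gamma(s)-v^\gamma(s)\bigr)\,\ud s\ \ge\ 0,
\]
which is exactly the ordering $u\ge v$ on $(0,t_0)$ you are trying to prove. Your fallback observation $\tilde u_0\ge S_{t_0}\mu$ does not help, because $w_v$ (and $\tilde v_0$) is typically strictly larger than $S_{t_0}\mu$ owing to the nonlinear term. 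The shift works in Lemma~\ref{lm:uniq_inequality} only because there a one-sided lower bound $w\ge S_{t_0}\mu$ suffices; here a two-sided ordering of initial data is needed, and the shift does not manufacture it.

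The paper avoids this by working directly with $g(t)=v(t)-u(t)$ on the whole time axis. Using $S_t(\mu-\nu)\le 0$ and the elementary inequality $a^\gamma-b^\gamma\le\bigl((a-b)_+\bigr)^\gamma$ yields the closed integral inequality
\[
g(t)\ \le\ \int_0^t S_{t-s}\bigl((g_+(s))^\gamma\bigr)\,\ud s,
\]
after which the argument of Theorem~2.8 in \cite{AgEs} (a Gronwall-type closure that exploits the strict lower bound $u(s,\cdot)>\bigl((1-\gamma)s\bigr)^{\gamma'}$ from Lemma~\ref{lm:uniq_inequality}) forces $g_+\equiv 0$. That lower bound is what makes $v^\gamma-u^\gamma$ controllable by $g_+$ without any a priori ordering at a fixed positive time, and it is the missing ingredient in your approach.
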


\begin{proof}
Define
\begin{equation*}
g(t) \equiv v(t) - u(t) .
\end{equation*}
We will now   prove that  $g_+(t)\equiv\max(g(t),0)= 0$.

Fix arbitrary $T>0$. We use the  condition $\nu \geq \mu$ and  an elementary   inequality $(a^\gamma - b^\gamma)\leq ((a-b)_+)^\gamma$  to get
\begin{align}\label{eq:comp_0}
g(t)& \leq S_t(\mu -\nu) + \int_0^t S_{t-s} (v^\gamma(s) - u^\gamma(s)) \ud s \notag \\
\leq & \int_0^t S_{t-s}(v^\gamma(s) - u^\gamma(s))_+ \ud s \notag \\
\leq & \int_0^t S_{t-s} \left(\left(g_+(s)\right)^\gamma \right)\ud s.
\end{align}
From this point the proof follows the proof of Theorem~2.8  in~\cite{AgEs}
while using Lemma~\ref{lm:uniq_inequality} whenever necessary. We left the details to the reader.
\end{proof}

The uniqueness for \eqref{eq:ll_eq} follows easily from the above  comparison Lemma~\ref{lm:comp}.  
\begin{proposition}[Uniqueness]\label{prop:uniqueness}
Let $\mu \in \MFORd$. There is at  most one  solution to
 \eqref{eq:ll_eq} which belongs to $ L_{loc} ^\infty\left([0,\infty) , L^{1,w}_+(\mathbb{R}^d)\right)\cap C^+((0,\infty)\times \Rd)$.
\end{proposition}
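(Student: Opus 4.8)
The plan is to obtain uniqueness as an immediate consequence of the Comparison Lemma~\ref{lm:comp}. Suppose $v_{1}$ and $v_{2}$ are two solutions of \eqref{eq:ll_eq} corresponding to the same initial measure $\mu\in\MFORd$, both belonging to $L_{loc}^{\infty}([0,\infty),L^{1,w}_{+}(\mathbb{R}^{d}))\cap C^{+}((0,\infty)\times\Rd)$. The key observation is that a function satisfying \eqref{eq:ll_eq} with equality is, in particular, simultaneously a subsolution and a supersolution in the sense required by Lemma~\ref{lm:comp}: for each $i=1,2$ we have
\begin{equation*}
v_{i}(t)=S_{t}\mu+\int_{0}^{t}S_{t-s}v_{i}^{\gamma}(s)\,\ud s,\qquad t>0,
\end{equation*}
so both $v_{i}(t)\geq S_{t}\mu+\int_{0}^{t}S_{t-s}v_{i}^{\gamma}(s)\,\ud s$ and $v_{i}(t)\leq S_{t}\mu+\int_{0}^{t}S_{t-s}v_{i}^{\gamma}(s)\,\ud s$ hold.

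First I would verify that the hypotheses of Lemma~\ref{lm:comp} are met: since $L_{loc}^{\infty}([0,\infty),L^{1,w}_{+}(\mathbb{R}^{d}))\subset L_{loc}^{\infty}((0,\infty),L^{1,w}(\mathbb{R}^{d}))$, $C^{+}((0,\infty)\times\Rd)\subset C((0,\infty)\times\Rd)$, and both $v_{1},v_{2}$ are non-negative, this is immediate. Next I would apply Lemma~\ref{lm:comp} with $u=v_{1}$ (a supersolution with initial measure $\nu=\mu$) and $v=v_{2}$ (a subsolution with initial measure $\mu$); the ordering hypothesis $\nu(f)\geq\mu(f)$ for all $f\in C^{+}_{b}(\mathbb{R}^{d})$ holds trivially, with equality. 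The conclusion is $v_{1}(t,x)\geq v_{2}(t,x)$ for all $(t,x)\in(0,\infty)\times\Rd$. Exchanging the roles of $v_{1}$ and $v_{2}$ — which is legitimate precisely because each is an exact solution, hence both a super- and a subsolution — yields the reverse inequality, so $v_{1}\equiv v_{2}$.

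Together with Proposition~\ref{prop:existence}, Corollary~\ref{cor:sol_inLinfsp} and Proposition~\ref{prop:continuity}, this completes the existence-and-uniqueness portion of Theorem~\ref{thm:semi}. I expect essentially no obstacle at this stage: the entire substance of the uniqueness claim has already been absorbed into the Comparison Lemma, whose proof in turn relies on the strict lower bound of Lemma~\ref{lm:uniq_inequality} together with the Gronwall-type argument of Theorem~2.8 in~\cite{AgEs}. The only mild point of vigilance is that the nonlinearity $\lambda\mapsto\lambda^{\gamma}$ fails to be Lipschitz at $0$, which is exactly why the bound $v(t,x)>((1-\gamma)t)^{\gamma'}$ must be invoked to keep the solutions bounded away from the bad point.
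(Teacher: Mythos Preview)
Your proof is correct and follows essentially the same approach as the paper: apply the Comparison Lemma~\ref{lm:comp} twice with $\nu=\mu$, once in each direction, to conclude $v_1\equiv v_2$. The additional remarks you make about verifying the hypotheses and about the role of the lower bound from Lemma~\ref{lm:uniq_inequality} in handling the non-Lipschitz nonlinearity are accurate and helpful, but the core argument is identical to the paper's.
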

\begin{proof}
Suppose  there exist two functions $v,u\in L_{loc} ^\infty\left([0,\infty) , L^{1,w}_+(\mathbb{R}^d)\right)\cap C^+((0,\infty)\times \Rd)$ that solve equation \eqref{eq:ll_eq}  for the same initial measure $\mu$. Then by Lemma~\ref{lm:comp}, $v(t,x)\geq u(t,x)$ and $u(t,x) \geq v(t,x)$ for any $(t,x) \in (0,\infty)\times \Rd$, and thus $u=v$ and  we are done.
\end{proof}

\subsection{Continuous dependence of solutions on initial data}
In the previous sections we  proved the existence and uniqueness of solutions to   equation \eqref{eq:ll_eq}, or  looking from different perspective we proved  for every $(t,x)\in (0,\infty)\times \Rd$ the existence of the mapping
\begin{equation*}
v(t,x,\cdot) : \MFORd \to \mathbb{R}_{++}.
\end{equation*}
Here $v(t,x,\mu)$ is a solution to   equation \eqref{eq:ll_eq} with initial datum $\mu$.\newline
In this section we will prove the  continuity of this mapping.

\begin{lemma}\label{lm:concave_mapping}
For any $(t,x)\in (0,\infty)\times \Rd$, the  mapping
\begin{equation*}
v(t,x,\cdot) : \MFORd \mapsto \mathbb{R}_{++}
\end{equation*}
 is concave, i.e.
\begin{equation*}
\begin{split}
v(t,x,\lambda\mu + (1-\lambda)\nu) \geq &\lambda v(t,x,\mu) + (1-\lambda)v(t,x,\nu),\\
& \forall \lambda\in (0,1),\:\: \forall (t,x) \in (0,\infty)\times\Rd.
\end{split}
\end{equation*}
\end{lemma}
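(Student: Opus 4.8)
The statement asserts concavity of the map $\mu \mapsto v(t,x,\mu)$, where $v(\cdot,\cdot,\mu)$ is the unique solution to \eqref{eq:ll_eq}. The natural route is to exploit the fact that the solution is obtained as a monotone limit of Picard iterates (Proposition~\ref{prop:existence}), together with uniqueness (Proposition~\ref{prop:uniqueness}) and the comparison lemma (Lemma~\ref{lm:comp}). I would fix $\lambda \in (0,1)$, $\mu,\nu \in \MFORd$, and set $\chi = \lambda\mu + (1-\lambda)\nu$. Define the candidate lower bound $u(t,x) \equiv \lambda v(t,x,\mu) + (1-\lambda)v(t,x,\nu)$, which is non-negative, continuous on $(0,\infty)\times\Rd$, and lies in $L^\infty_{loc}((0,\infty),L^{1,w}_+(\Rd))$ since each $v(\cdot,\cdot,\mu), v(\cdot,\cdot,\nu)$ does by Corollary~\ref{cor:sol_inLinfsp}. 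The goal is to show $v(t,x,\chi) \geq u(t,x)$.

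The key computation is to check that $u$ is a \emph{subsolution} of \eqref{eq:ll_eq} with initial datum $\chi$, i.e. that
\begin{equation*}
u(t,x) \leq (S_t\chi)(x) + \int_0^t \big(S_{t-s} u^\gamma(s,\cdot)\big)(x)\,\ud s.
\end{equation*}
By linearity of $S_t$ and of the integral, the linear parts match exactly: $\lambda S_t\mu + (1-\lambda)S_t\nu = S_t\chi$. It therefore remains to verify that
\begin{equation*}
\lambda \int_0^t S_{t-s} v^\gamma(s,\cdot,\mu)\,\ud s + (1-\lambda)\int_0^t S_{t-s} v^\gamma(s,\cdot,\nu)\,\ud s \leq \int_0^t S_{t-s}\big(\lambda v(s,\cdot,\mu) + (1-\lambda)v(s,\cdot,\nu)\big)^\gamma\,\ud s,
\end{equation*}
which follows pointwise from the concavity of the scalar function $r \mapsto r^\gamma$ on $[0,\infty)$ for $\gamma\in(0,1)$ (so $\lambda a^\gamma + (1-\lambda)b^\gamma \leq (\lambda a + (1-\lambda)b)^\gamma$), combined with the positivity of the kernel $p_{t-s}$ and monotonicity of $S_{t-s}$. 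Once $u$ is identified as a subsolution with datum $\chi$ and $v(\cdot,\cdot,\chi)$ is the (super)solution with the same datum $\chi$ — here one applies Lemma~\ref{lm:comp} with $\mu = \nu = \chi$, noting $\chi(f) \geq \chi(f)$ trivially — the comparison lemma yields $v(t,x,\chi) \geq u(t,x)$ for all $(t,x)$, which is precisely the asserted inequality.

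The main obstacle, such as it is, is a bookkeeping point rather than a conceptual one: Lemma~\ref{lm:comp} is stated for a supersolution $u$ and a subsolution $v$ both already known to lie in the right function space, so one must make sure the candidate $u$ (our convex combination of two solutions) indeed satisfies the integrability hypothesis $u \in L^\infty_{loc}((0,\infty),L^{1,w}(\Rd))\cap C((0,\infty)\times\Rd)$ — this is immediate from Corollary~\ref{cor:sol_inLinfsp} and Proposition~\ref{prop:continuity} — and that the subsolution inequality holds for \emph{all} $t>0$ with the correct direction. A minor alternative, avoiding the need to re-derive the comparison argument, is to prove the inequality directly at the level of Picard iterates: show by induction on $n$ that $v_n(t,x,\chi) \geq \lambda v_n(t,x,\mu) + (1-\lambda)v_n(t,x,\nu)$ is \emph{not} quite what monotone iteration gives (the iterates for $\chi$ need not dominate the combination at finite $n$), so the clean path really is via the subsolution/comparison route above. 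I would present the subsolution verification explicitly and invoke Lemma~\ref{lm:comp} to conclude.
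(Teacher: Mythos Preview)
Your proposal is correct and follows essentially the same route as the paper: define $u=\lambda v(\cdot,\cdot,\mu)+(1-\lambda)v(\cdot,\cdot,\nu)$, use concavity of $r\mapsto r^\gamma$ to show $u$ is a subsolution of \eqref{eq:ll_eq} with initial datum $\chi=\lambda\mu+(1-\lambda)\nu$, and then invoke the comparison Lemma~\ref{lm:comp} against the actual solution $v(\cdot,\cdot,\chi)$. Your extra care in verifying that $u$ lies in the function space required by Lemma~\ref{lm:comp} is a point the paper leaves implicit.
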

\begin{proof}
Since the function $x\to x^\gamma$ is concave,  for any positive $a,b$ and $\lambda \in (0,1)$, we have
\begin{equation}\label{eq:lm_cont_init}
\lambda a^\gamma + (1-\lambda) b^\gamma \leq ( \lambda a + (1-\lambda) b)^\gamma.
\end{equation}
\par
Let us fix an arbitrary $\lambda \in (0,1)$ and  define $u(t,z,\mu,\nu,\lambda)$ as follows
\begin{equation*}
 u(t,x,\mu,\nu,\lambda)\triangleq \lambda v(t,\mu) + (1-\lambda)v(t,\nu).
\end{equation*}
Then we have
\begin{equation*}
\begin{split}
u(t,x,\mu,\nu,\lambda) &= \lambda v(t,x,\mu) + (1-\lambda)v(t,x,\nu) \\
&= \big(S_t\sigma\big)(x) + \int_0^t \Big(S_{t-s} \Big( \lambda v^\gamma(s,\mu) + (1-\lambda) v^\gamma(s,\nu)\Big)\Big)(x)\ud s \\
& \leq \big(S_t\sigma\big)(x) + \int_0^t \Big( S_{t-s}\Big(\lambda v(s,\mu) + (1-\lambda) v(s,\nu)\Big)^\gamma\Big)(x) \ud s\\
& = \big(S_t\sigma\big)(x) + \int_0^t \Big(S_{t-s}( u^\gamma(s,\mu,nu,\lambda)\Big)(x) \ud s ,
\end{split}
\end{equation*}
where the above inequality  follows from \eqref{eq:lm_cont_init}, and we set
 $\sigma = \lambda \mu + (1-\lambda)\nu$. Hence we obtained
\begin{equation}\label{eq:lm_init_cont_1}
u(t,x)  \leq   \left(S_t\sigma\right)(x) + \int_0^t \left(S_{t-s} u^\gamma(s)\right)(x) \ud s .
\end{equation}
We now  recall that by definition
\begin{equation}\label{eq:lm_init_cont_2}
v(t,x,\sigma) = \left(S_t\sigma\right)(x) + \int_0^t \left(S_{t-s} v(s,\sigma)^\gamma \right)(x)\ud s .
\end{equation}
 and it is left to  use comparison  Lemma~\ref{lm:comp}.
\end{proof}
Before we prove continuity let us recall that   the  space of signed finite measures  on $\Rd$, $M_{F,S}(\Rd)$,  is a topological vector space with topology of weak convergence.
\begin{proposition}\label{prop:cont_init_data}
For any fixed $(t,x)\in (0,\infty)\times \Rd$, the  mapping
\begin{equation*}
v(t,x,\cdot): \MFORd \mapsto \mathbb{R}_{++}
\end{equation*}
is continuous.
\end{proposition}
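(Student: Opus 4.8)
The plan is to prove \emph{sequential} continuity and then invoke metrizability: since $\Rd$ is Polish, $M_F(\Rd)$ --- and hence its subspace $\MFORd$ --- is metrizable in the weak topology, so it suffices to show that $\mu_n\weakconv\mu$ in $\MFORd$ implies $v(t,x,\mu_n)\to v(t,x,\mu)$ for every fixed $(t,x)\in(0,\infty)\times\Rd$. Write $v_n:=v(\cdot,\cdot,\mu_n)$ for the solution of \eqref{eq:ll_eq} produced by Propositions~\ref{prop:existence} and~\ref{prop:uniqueness}, and set $M:=\sup_n\mu_n(\Rd)<\infty$ (weak convergence forces $\mu_n(\Rd)\to\mu(\Rd)$). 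The strategy is a compactness-and-uniqueness argument: show $\{v_n\}$ is relatively compact in $C((0,\infty)\times\Rd)$ with the topology of locally uniform convergence, show every limit point solves \eqref{eq:ll_eq} and lies in the uniqueness class, conclude it equals $v(\cdot,\cdot,\mu)$, and deduce convergence of the full sequence.

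First I would record uniform bounds. By \eqref{eq:sol_ineq11} together with Lemma~\ref{lm:simp_ineq}, for any $0<a\le b<\infty$ one has $((1-\gamma)a)^{1/(1-\gamma)}<v_n(t,x)\le e^{b}\big(M(2\pi a)^{-d/2}+1\big)$ for all $t\in[a,b]$, $x\in\Rd$, $n\ge1$, so $\{v_n\}$ is locally uniformly bounded on $(0,\infty)\times\Rd$. Next I would revisit the proof of Proposition~\ref{prop:continuity} and check that the modulus of continuity it produces depends on the initial measure only through its total mass, hence can be taken uniform in $n$: the choice of $\delta_3$ rests on the bound \eqref{eq:cont_gen_ineq}, where $\mu$ enters only via $\mu(\Rd)$ and via $S_{t_3}\mu$, both controlled by $M$ on compact time-intervals by Lemma~\ref{lm:simp_ineq}; and $I(\Delta t,\Delta x)+J_3(\Delta t,\Delta x)\to0$ can be made uniform in $n$ using $\mu_n(\Rd)\le M$, the uniform bound $S_s\mu_n\le M(2\pi a)^{-d/2}$ for $s\ge a$, and the $L^1$- and sup-norm continuity of translations and time-shifts of the Gaussian kernel on compact time-intervals away from $0$. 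With this, $\{v_n\}$ is equicontinuous on compact subsets of $(0,\infty)\times\Rd$, and the Arzel\`a--Ascoli theorem (applied on an exhausting sequence of compacts, with diagonal extraction) makes $\{v_n\}$ relatively compact for locally uniform convergence.

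Now take any subsequence and extract a further subsequence $v_{n_k}\to\widetilde v$ locally uniformly, with $\widetilde v\in C^+((0,\infty)\times\Rd)$ and $((1-\gamma)t)^{1/(1-\gamma)}\le\widetilde v(t,x)\le e^t(S_t\mu)(x)+e^t$, the last inequality because $S_t\mu_{n_k}(x)=\int p_t(x-y)\mu_{n_k}(dy)\to S_t\mu(x)$ by weak convergence ($p_t(x-\cdot)\in C_b(\Rd)$). Passing to the limit $k\to\infty$ in $v_{n_k}(t,x)=S_t\mu_{n_k}(x)+\int_0^t(S_{t-s}v_{n_k}^\gamma(s,\cdot))(x)\,ds$: the left side and the first term on the right tend to $\widetilde v(t,x)$ and $S_t\mu(x)$; for the integral term one splits $\int_0^t=\int_0^{\varepsilon}+\int_{\varepsilon}^t$, noting that on $[\varepsilon,t]\times\Rd$ the integrand $p_{t-s}(x-y)v_{n_k}^\gamma(s,y)$ is dominated by the $k$-independent integrable function $\big(e^tM(2\pi\varepsilon)^{-d/2}+e^t\big)p_{t-s}(x-y)$ (via \eqref{eq:sol_ineq12} and Lemma~\ref{lm:simp_ineq}), so dominated convergence applies there, while $\int_0^{\varepsilon}(S_{t-s}v_{n_k}^\gamma(s,\cdot))(x)\,ds\le(e^{\varepsilon}-1)(S_t\mu_{n_k}(x)+1)$ is small uniformly in $k$ (and likewise for $\widetilde v$); letting $\varepsilon\downarrow0$ yields $\widetilde v(t,x)=S_t\mu(x)+\int_0^t(S_{t-s}\widetilde v^\gamma(s,\cdot))(x)\,ds$. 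Moreover $\widetilde v(t,\cdot)\le e^tS_t\mu+e^t$ with $\|S_t\mu\|_{1,w}=\int(S_tw)(y)\,\mu(dy)\le\|w\|_\infty\mu(\Rd)<\infty$, so $\widetilde v\in L^\infty_{loc}((0,\infty),L^{1,w}_+(\Rd))\cap C^+((0,\infty)\times\Rd)$; since $v(\cdot,\cdot,\mu)$ lies in the same class (Propositions~\ref{prop:existence} and~\ref{prop:continuity}, Corollary~\ref{cor:sol_inLinfsp}), the uniqueness Proposition~\ref{prop:uniqueness} forces $\widetilde v=v(\cdot,\cdot,\mu)$. As the limit point is independent of the chosen subsequence, every subsequence of $\{v_n(t,x)\}$ has a further subsequence converging to $v(t,x,\mu)$, hence $v(t,x,\mu_n)\to v(t,x,\mu)$, which proves continuity.

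I expect the main obstacle to be the uniform-in-$n$ equicontinuity claimed in the second paragraph. It is not a formal consequence of Proposition~\ref{prop:continuity}, whose statement fixes a single $\mu$; one must re-run that proof while tracking the dependence of every constant on the initial measure. The point is that $\mu$ enters the estimates there only through $\mu(\Rd)$ and through Gaussian convolutions $S_s\mu$, and on compact subsets of $(0,\infty)\times\Rd$ both are dominated uniformly by $M$ and by the heat-kernel bound of Lemma~\ref{lm:simp_ineq}; the remaining ingredients (continuity of the Gaussian kernel in its time and space arguments, in $L^1$ and in sup norm, bounded away from $t=0$) are uniform automatically.
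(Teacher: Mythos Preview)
Your argument is correct, but it takes a genuinely different route from the paper. The paper's proof is a two-line application of abstract convex analysis: having established in Lemma~\ref{lm:concave_mapping} that $\mu\mapsto v(t,x,\mu)$ is concave on the convex set $\MFORd$ (this uses the concavity of $\lambda\mapsto\lambda^\gamma$ together with the comparison Lemma~\ref{lm:comp}), and knowing that it is bounded below by $0$, the paper simply invokes a classical result (Lemma~2.1 in Ekeland--T\'emam) that a concave function bounded from below on a neighborhood is continuous. Your approach, by contrast, is the standard PDE ``compactness plus uniqueness'' scheme: uniform a priori bounds and equicontinuity give relative compactness via Arzel\`a--Ascoli, every limit point is identified by passing to the limit in the integral equation and invoking Proposition~\ref{prop:uniqueness}, and the subsequence argument finishes. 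Your equicontinuity step is the one place requiring care, and your justification is sound: re-reading the proof of Proposition~\ref{prop:continuity}, the bounds on $J_1,J_2,J_4,J_5$ depend on $\mu$ only through $\mu(\Rd)$, while $I$ and $J_3$ are controlled uniformly in $n$ by $|I|\le M\sup_y|p_{t+\Delta t}(x+\Delta x-y)-p_t(x-y)|$ and $|J_3|\le C(M,\delta_3,t)\int_{\delta_3}^{t-\delta_3}\|p_{t+\Delta t-s}(x+\Delta x-\cdot)-p_{t-s}(x-\cdot)\|_{L^1}\,ds$, both of which tend to zero independently of $\mu_n$.

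The trade-off: the paper's proof is much shorter but imports a nontrivial external fact and requires first proving the concavity lemma; your proof is longer and more hands-on, but is fully self-contained within the analytic framework already built, and as a bonus it actually yields locally uniform convergence $v(\cdot,\cdot,\mu_n)\to v(\cdot,\cdot,\mu)$ on $(0,\infty)\times\Rd$, which is a bit more than the pointwise statement and is essentially what the Remark following Theorem~\ref{thm:semi} asserts.
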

\begin{remark}
It follows from  the above proposition  that the weak convergence of initial measures implies  pointwise convergence of solutions to  equation \eqref{eq:ll_eq}.
\end{remark}

\begin{proof}
By Lemma~\ref{lm:concave_mapping} for any $(t,x)\in (0,\infty)\times \Rd$, the  mapping $\mu\mapsto v(t,x,\cdot)$ is concave, and $v(t,x,\mu) \geq 0$ for any $\mu\in \MFORd$. Hence, by Lemma~$2.1$ in~\cite{ET},
mapping $\mu\mapsto v(t,x,\mu)$ is continuous.
\end{proof}
Now we are able to prove Theorem \ref{thm:semi}.
\begin{proof}[Proof of  Theorem \ref{thm:semi}]
The statement of the theorem follows from   Propositions \ref{prop:existence}, \ref{prop:continuity}, \ref{prop:uniqueness}, Corollary~\ref{cor:sol_inLinfsp} and Proposition~\ref{prop:cont_init_data}.
\end{proof}

%

\end{document}